\newtheorem{theorem}{Theorem}
\newtheorem{corollary}[theorem]{Corollary}
\newtheorem{example}[theorem]{Example}}
\newtheorem{proposition}[theorem]{Proposition}
{\theorembodyfont{\rmfamily}\newtheorem{remark}[theorem]{Remark}}
\newenvironment{proof}{\noindent\textit{Proof.\ }}{\hspace*{\fill}$\Box$\medskip}
\renewcommand{\geq}{\geqslant}
\renewcommand{\leq}{\leqslant}
\newcommand{\tsum}{\textstyle\sum\limits}
\newcommand{\tprod}{\textstyle\prod\limits}
\newcommand{\tint}{\textstyle\int\limits}
\DeclareFontFamily{U}{mathx}{\hyphenchar\font45}
\DeclareFontShape{U}{mathx}{m}{n}{
      <5> <6> <7> <8> <9> <10>
      <10.95> <12> <14.4> <17.28> <20.74> <24.88>
      mathx10
      }{}
\DeclareSymbolFont{mathx}{U}{mathx}{m}{n}
\DeclareMathSymbol{\bigtimes}{1}{mathx}{"91}
\newcommand{\vect}[1]{\underline{\boldsymbol{#1}}}
\newcommand{\raisemath}[1]{\mathpalette{\raisem@th{#1}}}
\newcommand{\raisem@th}[3]{\raisebox{#1}{$#2#3$}}
\newcommand{\vectq}{\text{\setul{1.7pt}{.5pt}$\raisemath{0.8pt}{\text{\ul{$\displaystyle\boldsymbol{q}$}}}$}}
\newcommand{\indexvectq}{\text{\setul{1.35pt}{.45pt}$\raisemath{1pt}{\text{\ul{$\boldsymbol{q}$}}}$}}
\newcommand{\dchi}{\displaystyle\chi}
\newcommand{\vectgamma}{\text{\setul{1.3pt}{.4pt}\ul{$\boldsymbol{\gamma}$}}}
\newcommand{\indexvectgamma}{\text{\setul{0.9pt}{.4pt}\ul{$\boldsymbol{\gamma}$}}}
\DeclareMathOperator{\pop}{p}
\newcommand{\p}{\!\rule{0.4pt}{0pt}\pop}
\DeclareMathOperator{\lcm}{lcm}
\DeclareMathOperator{\trig}{trig}
\DeclareMathOperator{\vspan}{span}
\DeclareMathOperator{\tmodop}{mod}
\newcommand{\tmod}{\,\tmodop}
\numberwithin{equation}{section}
\numberwithin{theorem}{section}
\numberwithin{figure}{section}
\DeclareMathOperator{\LDop}{LD}
\newcommand{\LD}{\vect{\LDop}}
\DeclareMathOperator{\LCop}{LC}
\newcommand{\LC}{\vect{\LCop}}
\DeclareMathOperator{\Iop}{I}
\newcommand{\I}{\vect{\Iop}}
\DeclareMathOperator{\Jop}{J}
\newcommand{\J}{\vect{\Jop}}
\begin{document}

\setlength\abovedisplayskip      { 7pt }
\setlength\abovedisplayshortskip { 3pt } 
\setlength\belowdisplayskip      { 7pt }
\setlength\belowdisplayshortskip { 3pt }

\title{Multivariate polynomial interpolation on Lissajous-Chebyshev nodes}

\author{ \begin{tabular}{ll} Peter Dencker$^{\mathrm{1)}}$\setcounter{footnote}{-1}\footnote{$^{\mathrm{1)}}$Institut f\"ur Mathematik, 
Universit\"at zu L\"ubeck.}\quad & Wolfgang Erb$^{\mathrm{2)}}$\setcounter{footnote}{-1}\footnote{$^{\mathrm{2)}}$Department of Mathematics, University of Hawai`i at M\=anoa. \newline }
\setcounter{footnote}{-1}\footnote{The authors gratefully acknowledge the financial support of the German Research Foundation \newline (DFG, grant number ER 777/1-1)}\\ 
\texttt{\normalsize dencker@math.uni-luebeck.de\quad} &  \texttt{\normalsize erb@math.hawaii.edu}
\end{tabular}}

\author{\begin{tabular}{ll} Peter
Dencker$^{\mathrm{1)}}$\setcounter{footnote}{-1}\footnote{$^{\mathrm{1)}}$Institut
f\"ur Mathematik,
Universit\"at zu L\"ubeck.}\quad & Wolfgang
Erb$^{\mathrm{2),a)}}$\setcounter{footnote}{-1}\footnote{$^{\mathrm{2)}}$Department
of Mathematics, University of Hawai`i at M\=anoa. }
\setcounter{footnote}{-1}\footnote{$^{\mathrm{a)}}$The author gratefully
acknowledges the financial support of the German Research Foundation
\newline (DFG, grant number ER 777/1-1)}\\
\texttt{\normalsize dencker@math.uni-luebeck.de\quad} &
\texttt{\normalsize erb@math.hawaii.edu}
\end{tabular}}

\date{\ \\[2mm] \normalsize 2017/08/21}

\maketitle

\thispagestyle{empty}

\enlargethispage{2em}

\tableofcontents 

\bigskip

\begin{abstract}
In this article, we study multivariate polynomial interpolation and quadrature rules on non-tensor product node sets related to Lissajous curves and Chebyshev varieties. 
After classifying multivariate Lissajous curves and the interpolation nodes linked to these curves, we derive a discrete orthogonality structure on these node sets. 
Using this orthogonality structure, we obtain unique polynomial interpolation in appropriately defined spaces of multivariate Chebyshev polynomials. 
Our results generalize corresponding interpolation and quadrature results for the Chebyshev-Gauß-Lobatto points in dimension one and the Padua points in dimension two. 

\vspace{2em}
\end{abstract}

\section{Introduction and foundations}

\subsection{Introduction}

The research of this work has its origins in the investigation of polynomial interpolation schemes for data given on two-dimensional Lissajous curves 
\cite{BosDeMarchiVianelloXu2006,Erb2015,ErbKaethnerAhlborgBuzug2015,ErbKaethnerDenckerAhlborg2015}. 
In this article, we are interested in finding a multivariate generalization of this two-dimensional theory. For this purpose, we explicitly construct $\mathsf{d}$-variate non-tensor product node sets
that are related to the intersection points of Lissajous curves and to the singular points of Chebyshev varieties, and that
allow unique polynomial interpolation in particular spaces of $\mathsf{d}$-variate polynomials.

Studies on Lissajous curves go back to the early 19th century. Here, important work was done by the American mathematician N. Bowditch and the French physicist 
J.A. Lissajous \cite{Lissajous1857}. Another classical reference for plane Lissajous curves is the dissertation of W. Braun \cite{Braun1875} 
accomplished under the supervision of F. Klein. Recent studies of Lissajous curves in dimension three related to knot theory can be found in 
\cite{BogleHearstJonesStoilov1994,JonesPrzytycki1998,KoseleffPecker2011}. Lissajous curves are also closely related to Chebyshev varieties. In dimension two, some of these relations 
are elaborated in \cite{Fischer,KoseleffPecker2011,Merino2003}.

In the framework of bivariate polynomial interpolation, Lissajous curves appear first in the theory of the Padua points. These points are extensively studied
in \cite{BosDeMarchiVianelloXu2006,BosDeMarchiVianelloXu2007,CaliariDeMarchiVianello2005,CaliariDeMarchiVianello2008} and turn out to be node points generated by a 
particular Lissajous figure. Recently, this theory was extended to more general two-dimensional Lissajous curves and its respective node sets, see
\cite{Erb2015,ErbKaethnerAhlborgBuzug2015,ErbKaethnerDenckerAhlborg2015}. 
The investigated node points allow unique polynomial interpolation in properly defined spaces of bivariate polynomials. Moreover, they have a series of properties
that make them very interesting for computational purposes: the interpolating polynomial can be computed in an efficient
way by using fast Fourier methods, cf. \cite{CaliariDeMarchiSommarivaVianello2011,CaliariDeMarchiVianello2008,ErbKaethnerAhlborgBuzug2015,ErbKaethnerDenckerAhlborg2015}, the growth of the Lebesgue constant
is of logarithmic order, cf. \cite{BosDeMarchiVianelloXu2006,Erb2015}, and the points form a Chebyshev lattice of rank one, see \cite{CoolsPoppe2011}. A first approach for quadrature rules and hyperinterpolation in dimension three using Lissajous curves was recently developed in \cite{BosDeMarchiVianello2015}.

The goal of this work is to generalize the above mentioned two-dimensional interpolation theory to arbitrary finite dimensions. 
One challenging task thereby is to find appropriate node sets and the corresponding Lissajous curves that generate these points. Not all Lissajous curves are suitable for this purpose. For instance, in knot theory three dimensional Lissajous curves are investigated that have no 
self-intersection points at all, see \cite{BogleHearstJonesStoilov1994,JonesPrzytycki1998,KoseleffPecker2011}. For an integer valued vector $\vect{n}$ with relatively prime entries, we introduce in this work two types of node sets 
$\LC^{(\vect{n})}$ and $\LC^{(2\vect{n})}_{\vect{\kappa}}$. Due to their intimate link to $\mathsf{d}$-variate Lissajous curves and Chebyshev varieties we denote their elements as Lissajous-Chebyshev nodes. 
For properly defined spaces of multivariate Chebyshev polynomials linked to polygonal index sets 
$\vect{\Gamma}^{(\vect{n})}$ and $\vect{\Gamma}^{(2\vect{n})}_{\vect{\kappa}}$, we are able to show unique polynomial interpolation on these node points. 

The uniqueness of polynomial interpolation was proven for the two-dimensional case in \cite{Erb2015,ErbKaethnerAhlborgBuzug2015} by reducing the bivariate interpolation problem to
an interpolation problem for trigonometric polynomials on the Lissajous curve. In this work, we use a purely discrete approach for the proof of the uniqueness
in which the notion of Lissajous curve is not needed. However, the characterization of the interpolation nodes using 
multivariate Lissajous figures and Chebyshev varieties was the initial motivation and gives an interesting geometric interpretation
of the theory. Therefore, a large part of this work is also concerned with the characterization of these curves and varieties. 
Some of the geometric relations between interpolation nodes and the curves can be seen in Figures \ref{v2:fig:lissajous1}, \ref{v2:fig:lissajous2} and \ref{v2:fig:lissajous3}. 

There exist other non-tensor product node sets that are related to the discussed point sets. In particular bivariate point sets introduced by Morrow, Patterson~\cite{MorrowPatterson1978} and Xu~\cite{Xu1996}
as well as some extensions of these node sets~\cite{DeMarchiVianelloXu2007,Harris2010,Harris2013} are to be mentioned. 
For an overview and further references on general interpolation techniques with multivariate polynomials, we refer to the survey articles \cite{GascaSauer2000,GascaSauer2000b}. 

After introducing the necessary notation, we start in Section \ref{v2:1509092255} with a rough classification of $\mathsf{d}$-variate Lissajous curves. We distinguish 
between degenerate and non-degenerate curves and derive for both cases simplifications for particular values of the parameters as well as some
geometric properties of their self-intersection points. Particular degenerate Lissajous curves are relevant in this work. We will explicitly specify  
the number and the type of their self-intersection points in Theorem \ref{v2:1503151619}. 

In Section \ref{v2:15009022044}, we introduce the set $\LC^{(\vect{n})}$ and show in Theorem \ref{v2:1509011746} that it allows unique polynomial interpolation 
in a space $\Pi_{\mathsf{d}}^{(\vect{n})}$ of $\mathsf{d}$-variate polynomials. 
Essential for the proof is a discrete orthogonality structure proven in Theorem \ref{v2:1507091911}, and the fact that $\mathrm{dim} \, \Pi_{\mathsf{d}}^{(\vect{n})} = 
\# \LC^{(\vect{n})}$, see Proposition \ref{v2:1507151430}. Furthermore, we will give in Theorem \ref{v2:1509092212} and \ref{v2:1508251450} two characterizations of $\LC^{(\vect{n})}$ using
degenerate Lissajous curves as well as Chebyshev varieties.

In Section \ref{v2:201510121546}, we consider a second type $\LC^{(2 \vect{n})}_{\vect{\kappa}}$ of point sets. Similar as for the sets $\LC^{(\vect{n})}$, we show in Theorem \ref{v2:201510121554} that there 
exists a unique polynomial in $\Pi_{\mathsf{d},\vect{\kappa}}^{(2\vect{n})}$ interpolating function values on $\LC^{(2 \vect{n})}_{\vect{\kappa}}$. In comparison to 
$\LC^{(\vect{n})}$, the sets $\LC^{(2 \vect{n})}_{\vect{\kappa}}$ are symmetric with respect to reflections at the coordinate axis. Further, 
the points in $\LC^{(2 \vect{n})}_{\vect{\kappa}}$ can be linked to a Chebyshev variety and to a family of Lissajous curves, see Theorem \ref{v2:201510121616}. 

For both node sets, $\LC^{(\vect{n})}$ 
and $\LC^{(2 \vect{n})}_{\vect{\kappa}}$, we will further derive a simple scheme for the efficient computation of the interpolating polynomial.

\subsection{General notation}

The number of elements of a finite set $\mathsf{X}$ is denoted by $\#\mathsf{X}$. For $\mathsf{x},\mathsf{y}\in\mathsf{X}$, 
we use the Kronecker delta symbol: $\delta_{\mathsf{x},\mathsf{y}}=1$ if $\mathsf{y}=\mathsf{x}$ and $\delta_{\mathsf{x},\mathsf{y}}=0$ otherwise.
Further, to each $\mathsf{x}\in\mathsf{X}$ we associate a Dirac delta function $\delta_{\mathsf{x}}$ on $\mathsf{X}$ 
defined by $\delta_{\mathsf{x}}(\mathsf{y})=\delta_{\mathsf{x},\mathsf{y}}$, $\mathsf{y}\in\mathsf{X}$.
We denote by $\mathcal{L}(\mathsf{X})$ the vector space of all complex-valued functions defined on $\mathsf{X}$. 
Then, the delta functions  $\delta_{\mathsf{x}}$, $\mathsf{x}\in\mathsf{X}$, form a basis of the vector space $\mathcal{L}(\mathsf{X})$. If $\nu$ is a measure defined on the power set $\mathcal{P}(\mathsf{X})$ of a finite non-empty set $\mathsf{X}$ and $\nu(\{\mathsf{x}\})>0$ for all $\mathsf{x}\in\mathsf{X}$, an inner product $\langle \,\cdot,\cdot\,\rangle_{\nu}$ for 
the vector space $\mathcal{L}(\mathsf{X})$ is defined by
\[\langle f_1,f_2\rangle_{\nu}=\tint f_1\overline{f}_2\,\mathrm{d}\nu.\]
 The norm corresponding to  $\langle \,\cdot,\cdot\,\rangle_{\nu}$  is denoted by  $\|\cdot\|_{\nu}$.

\medskip 
For a fixed $\mathsf{d}\in\mathbb{N}$, the elements of the $\mathsf{d}$-dimensional vector space  $\mathbb{R}^{\mathsf{d}}$ over $\mathbb{R}$ are written as $\vect{x}=(x_1,\ldots,x_{\mathsf{d}})$. 
We use the abbreviations $\vect{0}$ and $\vect{1}$ for the $\mathsf{d}$-tuples for which all components are $0$ or $1$, respectively.

\medskip

The  least common multiple in $\mathbb{N}$ of the elements of a finite set $M\subseteq \mathbb{N}$ is denoted by $\lcm M$, the greatest common divisor of the elements of a set $M\subseteq \mathbb{Z}$, $M\supsetneqq \{0\}$, is denoted by $\gcd M$. 

  The following well-known \textit{Chinese remainder theorem} will be used repeatedly.\\
Let $\vect{k}\in \mathbb{N}^{\mathsf{d}}$ and $\vect{a}\in\mathbb{Z}^{\mathsf{d}}$. If
\begin{equation}\label{v2:1503211222}
\forall\,\mathsf{i},\mathsf{j}\in\{1,\ldots,\mathsf{d}\}:\quad a_{\mathsf{i}}\equiv a_{\mathsf{j}}\mod \gcd\{k_{\mathsf{i}},k_{\mathsf{j}}\},
\end{equation}
then there exists a unique $l\in\{\,0,\ldots,\lcm\{k_1,\ldots,k_{\mathsf{d}}\}-1\,\}$ solving the following system of simultaneous congruences
\begin{equation}\label{v2:1505032140}
\forall\,\mathsf{i}\in\{1,\ldots,\mathsf{d}\}:\quad l=a_{\mathsf{i}} \mod k_{\mathsf{i}}. 
\end{equation}
Note that \eqref{v2:1503211222} is also a necessary condition for the existence of $l\in\mathbb{Z}$ 
satisfying  \eqref{v2:1505032140}. 

\medskip

For $\vect{k}\in\mathbb{N}^{\mathsf{d}}$, we set 
\begin{equation} \label{v2:201609011131}
\p[\vect{k}]=\tprod_{\mathsf{j}=1}^{\mathsf{d}} k_{\mathsf{j}},\qquad \p_{\mathsf{i}}[\vect{k}]=
\tprod_{\substack{\mathsf{j}=1\\\mathsf{j}\neq \mathsf{i}}}^{\mathsf{d}} k_{\mathsf{j}}=\p[\vect{k}]
/k_\mathsf{i},\quad \mathsf{i}\in \{1,\ldots,\mathsf{d}\}.
\end{equation}

{\it Throughout this paper we will assume that the integers
\begin{equation}\label{v2:1509151442} 
\underline{n_1,\ldots,n_{\mathsf{d}}\in \mathbb{N}\;  \textit{are pairwise relatively prime}}.
\end{equation}
}
This work considers two slightly different constructions, one naturally linked to \text{$\vect{m}=\vect{n}$} and one naturally linked to $\vect{m}=2\vect{n}$. 
In these cases, we further denote
\[t^{(\vect{n})}_{l}=l\pi/\p[\vect{n}],\quad t^{(2\vect{n})}_{l}=l\pi/(2\p[\vect{n}]),\qquad l\in\mathbb{Z}.\]
For   $m\in\mathbb{N}$, $i\in\mathbb{N}_0$, as well as  $\vect{m}\in\mathbb{N}^{\mathsf{d}}$, $\vect{i}\in\mathbb{N}_0^{\mathsf{d}}$,
we use the notation
\begin{equation}
\vect{z}^{(\vect{m})}_{\vect{i}}=\left(z^{(m_1)}_{i_1},\ldots,z^{(m_{\mathsf{d}})}_{i_{\mathsf{d}}}\right),\quad  z^{(m)}_{i}=\cos\left(i\pi/m\right), \label{v2:201510121326}
\end{equation}
for the so-called \textit{Chebyshev-Gauß-Lobatto} points.


\subsection{Lissajous curves}\label{v2:1509092255} 
For $\vectq\in\mathbb{N}^{\mathsf{d}}$, $\vect{\alpha}\in\mathbb{R}^{\mathsf{d}}$ and $\vect{u}\in\{-1,1\}^{\mathsf{d}}$, we define the \textit{Lissajous curve}  
$\vect{\mathfrak{l}}^{(\indexvectq)}_{\vect{\alpha},\vect{u}}$ by 
\begin{equation} \label{v2:201509161237}
\vect{\mathfrak{l}}^{(\indexvectq)}_{\vect{\alpha},\vect{u}}(t)=\left( u_1 \cos \left( q_1t- \alpha_1\right), \cdots, 
u_{\mathsf{d}} \cos \left( q_{\mathsf{d}}t- \alpha_{\mathsf{d}}\right) \right), \quad t \in \mathbb{R}.
\end{equation} 

The curve  $\vect{\mathfrak{l}}^{(\indexvectq)}_{\vect{\alpha},\vect{u}}$  is called \textit{degenerate} if 
\begin{equation}\label{v2:1509091713}
\text{there exist $t'\in\mathbb{R}$ and $\vect{u}'\in\{-1,1\}^{\mathsf{d}}$ such that $\vect{\mathfrak{l}}^{(\indexvectq)}_{\vect{\alpha},\vect{u}}(\,\cdot\,-t') = \vect{\mathfrak{l}}^{(\indexvectq)}_{\vect{0},\vect{u}'}$},
\end{equation}
and \textit{non-degenerate} otherwise.
This terminology  is motivated by the following Theorem \ref{v2:1503211414} that gives a rough classification of Lissajous curves and their geometry.

Obviously, the  Lissajous curve $\vect{\mathfrak{l}}^{(\indexvectq)}_{\vect{\alpha},\vect{u}}$ is a periodic function with period $2\pi$.
By scaling the parameter $t$, the general definition \eqref{v2:201509161237} can easily be reduced to the case $\gcd\{q_1,\ldots,q_{\mathsf{d}}\}=1$ and we restrict ourselves to this case. In particular, the following theorem implies that in this case $2\pi$ is the fundamental period of $\vect{\mathfrak{l}}^{(\indexvectq)}_{\vect{\alpha},\vect{u}}$.
\begin{theorem}\label{v2:1503211414}  Let $\vectq=(q_1,\ldots,q_{\mathsf{d}})\in\mathbb{N}^{\mathsf{d}}$  satisfy $\gcd\{q_1,\ldots,q_{\mathsf{d}}\}=1$.\\
Let $\mathfrak{U}_{\vect{\alpha},\vect{u}}^{(\indexvectq)}(\vect{x})$ be the set of all $t\in [0,2\pi)$ satisfying $\vect{\mathfrak{l}}^{(\indexvectq)}_{\vect{\alpha},\vect{u}}(t)=\vect{x}$.

a) If $\vect{\mathfrak{l}}^{(\indexvectq)}_{\vect{\alpha},\vect{u}}$ is non-degenerate,
then $\#\mathfrak{U}_{\vect{\alpha},\vect{u}}^{(\indexvectq)}(\vect{x})>1$  for only finitely many  $\vect{x}$. \\ Furthermore, for each $\vect{x}$ we have $\#\mathfrak{U}_{\vect{\alpha},\vect{u}}^{(\indexvectq)}(\vect{x})\leq 2^{\mathsf{d}-1}$.

b) If $\vect{\mathfrak{l}}^{(\indexvectq)}_{\vect{\alpha},\vect{u}}$ is degenerate, then, up to precisely two exceptions, $\#\mathfrak{U}_{\vect{\alpha},\vect{u}}^{(\indexvectq)}(\vect{x})\geq 2$ for  $\vect{x}\in\vect{\mathfrak{l}}^{(\indexvectq)}_{\vect{\alpha},\vect{u}}([0,2\pi))$. There are only finitely many 
$\vect{x}$ with $\#\mathfrak{U}_{\vect{\alpha},\vect{u}}^{(\indexvectq)}(\vect{x})>2$ and in this case $\#\mathfrak{U}_{\vect{\alpha},\vect{u}}^{(\indexvectq)}(\vect{x})$ is even.
 Furthermore, for every $\vect{x}$ we have
$\#\mathfrak{U}_{\vect{\alpha},\vect{u}}^{(\indexvectq)}(\vect{x})\leq 2^{\mathsf{d}}$.
\end{theorem}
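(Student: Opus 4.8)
The plan is to lift the whole problem to the torus $\mathbb{T}^{\mathsf{d}} = (\mathbb{R}/2\pi\mathbb{Z})^{\mathsf{d}}$. I would introduce the winding map $\Phi\colon[0,2\pi)\to\mathbb{T}^{\mathsf{d}}$, $\Phi(t) = (q_1 t - \alpha_1,\ldots,q_{\mathsf{d}} t - \alpha_{\mathsf{d}})$, and the cosine projection $C\colon\mathbb{T}^{\mathsf{d}}\to[-1,1]^{\mathsf{d}}$, $C(\vect{\phi}) = (u_1\cos\phi_1,\ldots,u_{\mathsf{d}}\cos\phi_{\mathsf{d}})$, so that $\vect{\mathfrak{l}}^{(\indexvectq)}_{\vect{\alpha},\vect{u}} = C\circ\Phi$. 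The key elementary fact, using $\gcd\{q_1,\ldots,q_{\mathsf{d}}\}=1$, is that $\Phi$ is injective: if $q_j\tau\equiv0\pmod{2\pi}$ for all $j$, then writing $\tau/(2\pi)$ in lowest terms shows its denominator divides every $q_j$, hence equals $1$ and $\tau\in2\pi\mathbb{Z}$. Consequently $\#\mathfrak{U}_{\vect{\alpha},\vect{u}}^{(\indexvectq)}(\vect{x})$ equals the number of points of the fibre $C^{-1}(\vect{x})$ lying on the closed geodesic $G := \Phi([0,2\pi))$. Since $u_j\cos\phi_j = x_j$ has at most two solutions $\phi_j\in\{\theta_j,-\theta_j\}$ with $\theta_j=\arccos(u_j x_j)$ (and exactly one when $x_j=\pm1$), the fibre $C^{-1}(\vect{x})$ has at most $2^{\mathsf{d}}$ points, giving the universal bound $\#\mathfrak{U}_{\vect{\alpha},\vect{u}}^{(\indexvectq)}(\vect{x})\leq2^{\mathsf{d}}$.

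Next I would exploit the involution $\sigma\colon\vect{\phi}\mapsto-\vect{\phi}$. As cosine is even, $\sigma$ preserves every fibre $C^{-1}(\vect{x})$, and its fixed points are exactly $\{0,\pi\}^{\mathsf{d}}$, which meet a fibre only when $\vect{x}\in\{-1,1\}^{\mathsf{d}}$. A short computation with $\Phi$ shows that $\sigma(G)\subseteq G$ holds iff the system $q_j\tau\equiv2\alpha_j\pmod{2\pi}$ ($j=1,\ldots,\mathsf{d}$) is solvable, and I would check that this solvability is precisely the degeneracy condition \eqref{v2:1509091713}: the substitution $t'=-\tau/2$ turns it into $q_j t'\equiv-\alpha_j\pmod\pi$. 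Thus degenerate $\iff\sigma(G)=G$.

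In the non-degenerate case $\sigma(G)\neq G$, and a common point of $G$ and $\sigma(G)$ would force the now-unsolvable system above, so $G\cap\sigma(G)=\emptyset$. Then $\sigma$ maps $G\cap C^{-1}(\vect{x})$ bijectively onto the disjoint set $\sigma(G)\cap C^{-1}(\vect{x})$, both contained in $C^{-1}(\vect{x})$; hence $2\,\#\mathfrak{U}_{\vect{\alpha},\vect{u}}^{(\indexvectq)}(\vect{x})\leq\#C^{-1}(\vect{x})\leq2^{\mathsf{d}}$, which gives $\#\mathfrak{U}_{\vect{\alpha},\vect{u}}^{(\indexvectq)}(\vect{x})\leq2^{\mathsf{d}-1}$. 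In the degenerate case $\sigma$ restricts to an involution of $G\cap C^{-1}(\vect{x})$; away from the corner fibres it is fixed-point free, so the cardinality is even, hence $\geq2$ whenever $\vect{x}\in\vect{\mathfrak{l}}^{(\indexvectq)}_{\vect{\alpha},\vect{u}}([0,2\pi))$. The only exceptions are the corner points: solving $\Phi(t)\in\{0,\pi\}^{\mathsf{d}}$, i.e.\ $q_j t\equiv\alpha_j\pmod\pi$, yields (by the same $\gcd$-argument modulo $\pi$) exactly two parameters $t_0,t_0+\pi$ in $[0,2\pi)$, which map to two distinct corners, each having a one-point fibre, so there $\#\mathfrak{U}_{\vect{\alpha},\vect{u}}^{(\indexvectq)}(\vect{x})=1$.

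The remaining and most delicate step is finiteness of the multiple points, which I would obtain by counting pairs $t_1\neq t_2$ of equal image. Coordinatewise, $\cos(q_j t_1-\alpha_j)=\cos(q_j t_2-\alpha_j)$ reads $q_j(t_1-t_2)\equiv0$ or $q_j(t_1+t_2)\equiv2\alpha_j\pmod{2\pi}$, splitting the pairs into $2^{\mathsf{d}}$ sign patterns. Writing $u=t_1-t_2$ and $v=t_1+t_2$, the ``$+$'' coordinates constrain $u$ and the ``$-$'' coordinates constrain $v$; a genuine pair forces at least one ``$-$'' (otherwise $u\in2\pi\mathbb{Z}$, so $t_1=t_2$), while any ``$+$'' confines $u$ to a finite set. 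Hence every pattern except the all-``$-$'' one admits only finitely many $(u,v)$, and the all-``$-$'' pattern is feasible exactly in the degenerate case, its $v$-system being the degeneracy system. In the non-degenerate case this makes all genuine patterns finite, so there are finitely many $\vect{x}$ with $\#\mathfrak{U}_{\vect{\alpha},\vect{u}}^{(\indexvectq)}(\vect{x})>1$. In the degenerate case the all-``$-$'' pattern produces the generic double points, while any point with $\#\mathfrak{U}_{\vect{\alpha},\vect{u}}^{(\indexvectq)}(\vect{x})>2$ must, among its three or more parameters, realise a pair from a pattern other than the all-``$-$'' one (three parameters cannot pairwise satisfy the single all-``$-$'' constraint $t_a+t_b\equiv v_0$ without a coincidence); since those patterns yield only finitely many pairs, such points are finite in number, and even by the involution argument. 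The hard part is exactly this bookkeeping: isolating the all-``$-$'' pattern as the only source of positive-dimensional families of pairs, and verifying that the factor of two between $(u,v)$ and $(t_1,t_2)$ does not spoil finiteness.
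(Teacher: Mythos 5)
Your proposal is correct, and while the underlying arithmetic is the same, the packaging is genuinely different from the paper's. Your sign patterns are exactly the paper's vectors $\vect{v}\in\{-1,1\}^{\mathsf{d}}$ in condition \eqref{v2:1503171442} (a ``$+$'' at coordinate $\mathsf{j}$ is $v_{\mathsf{j}}=1$, a ``$-$'' is $v_{\mathsf{j}}=-1$), and your observation that the all-``$-$'' system in $v=t_1+t_2$ is precisely the degeneracy condition is the paper's statement \eqref{v2:1503252146}. What you do differently is, first, to replace the paper's map $\mathfrak{a}(t;\,\cdot\,):\vect{\mathfrak{A}}(t)\to\mathcal{A}(t)$ by the torus picture $C\circ\Phi$ with the involution $\sigma$: the bound $2^{\mathsf{d}-1}$ then comes from the disjointness of $G$ and $\sigma(G)$ inside a fibre of size at most $2^{\mathsf{d}}$, rather than from the paper's argument that $\vect{v}$ and $-\vect{v}$ cannot both lie in $\vect{\mathfrak{A}}(t)$; these are equivalent in content but yours is more geometric and makes the evenness in part b) (a fixed-point-free involution on each non-corner fibre) immediate. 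Second, and more substantially, your finiteness argument is different: the paper proves that a non-degenerate curve has only finitely many multiple points by a compactness argument (extracting a convergent subsequence $(s_m)$ with constant sign vector and forcing an integer-valued sequence to stabilize), whereas you observe directly that any pattern containing both a ``$+$'' and a ``$-$'' pins $(t_1-t_2,\,t_1+t_2)$, hence $(t_1,t_2)$, to a finite set. This is more elementary, yields explicit cardinality bounds, and your three-parameter pigeonhole (three times cannot pairwise satisfy $t_a+t_b\equiv v_0$) cleanly disposes of the finiteness of $\{\vect{x}\,|\,\#\mathfrak{U}_{\vect{\alpha},\vect{u}}^{(\indexvectq)}(\vect{x})>2\}$ in the degenerate case, where the paper instead derives $2qt\eqsim 0$ and concludes $t=l\pi/q$. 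The identification of the exactly two exceptional corner points is the same in both treatments. One small point worth writing out explicitly if you formalize this: the equivalence of \eqref{v2:1509091713} with solvability of $q_{\mathsf{j}}t'\equiv-\alpha_{\mathsf{j}}\pmod{\pi}$ requires the linear independence of $\cos(q_{\mathsf{j}}t)$ and $\sin(q_{\mathsf{j}}t)$ (i.e.\ $q_{\mathsf{j}}\geq1$), which holds here but should be said.
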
 

 If the Lissajous curve is non-degenerate, we can find, up to finitely many exceptions, only one point in time $t\in [0,2\pi)$ corresponding to a point on the curve. On the other hand, degenerate curves are doubly
traversed: in the standard form $\vect{\mathfrak{l}}^{(\indexvectq)}_{\vect{0},\vect{u}}$ we have $\vect{\mathfrak{l}}^{(\indexvectq)}_{\vect{0},\vect{u}}=\vect{\mathfrak{l}}^{(\indexvectq)}_{\vect{0},\vect{u}}(2\pi\,-\,\cdot\,)$, and thus the value $t = \pi$ can be interpreted as a reverse point in time   after which the curve $\vect{\mathfrak{l}}^{(\indexvectq)}_{\vect{0},\vect{u}}$ is traversed in the opposite direction. 

\medskip

\begin{proof} If $t-s\in 2\pi\mathbb{Z}$, we write $s\eqsim t$. We use the abbreviations  $q=\lcm\{q_1,\ldots,q_{\mathsf{d}}\}$, $a_{\mathsf{i}}=q/q_{\mathsf{i}}$, and $\vect{\mathfrak{l}}=\vect{\mathfrak{l}}^{(\indexvectq)}_{\vect{\alpha},\vect{u}}$.  Defining $\mathcal{A}(t)=\{s\in [0,2\pi)\,|\,\vect{\mathfrak{l}}(s)=\vect{\mathfrak{l}}(t)\,\}$, we have $s\in \mathcal{A}(t)$ if and only if there is a  $\vect{v}\in \{-1,1\}^{\mathsf{d}}$ such that
 \begin{equation}\label{v2:1503171442}
\forall\,\mathsf{i}\in\{1,\ldots,\mathsf{d}\}:\quad q_{\mathsf{i}}(s-v_{\mathsf{i}}t)-(1-v_{\mathsf{i}})\alpha_{\mathsf{i}}\eqsim 0.
\end{equation} 
By B\'ezout's lemma, we can find $A_{1,2},A_{2,1}\in\mathbb{Z}$ such that $A_{2,1}q_{1}+A_{1,2}q_{2}=\gcd\{q_{1},q_{2}\}$.
 If $s\in \mathcal{A}(t)$, we obtain
\[
\gcd\{q_{1},q_{2}\}(s- A_{2,1}q_{1}v_{1}t- A_{1,2}q_{2}v_{2}t)
\eqsim A_{2,1}(1-v_{1})\alpha_{1}+A_{1,2}(1-v_{2})\alpha_{2}.
\] 
By a simple induction argument and the assumption $\gcd\{q_1,\ldots,q_{\mathsf{d}}\}=1$, we can therefore find $B_{\mathsf{i}},C_{\mathsf{i}}\in\mathbb{Z}$ (only depending on $\vectq$) such that 
\begin{equation}\label{v2:1508222356}
\forall\,s,t\in [0,2\pi):\quad\eqref{v2:1503171442}\ \Longrightarrow\  s \eqsim 
(\,{\textstyle\sum_{\mathsf{i}} B_{\mathsf{i}} v_{\mathsf{i}}})t+{\textstyle\sum_{\mathsf{i}}C_{\mathsf{i}}(1-v_{\mathsf{i}})\alpha_{i}}.
\end{equation}
\vspace{-1em}\\
We fix $t\in [0,2\pi)$ and consider the set
\[\vect{\mathfrak{A}}(t) = \{\vect{v}\in \{-1,1\}^{\mathsf{d}}\,|\, \exists\,s\in [0,2\pi)\ \text{with}\  
\eqref{v2:1503171442}\}.\]
Let $\vect{v}\in \vect{\mathfrak{A}}(t)$. Then, by the definition of $\vect{\mathfrak{A}}(t)$ there exists $s\in [0,2\pi)$ with \eqref{v2:1503171442}. If \eqref{v2:1503171442} is satisfied for $\vect{v}$ and $s'\in [0,2\pi)$ instead of $s$, then \eqref{v2:1508222356} implies $s'=s$. Therefore, a  function $\mathfrak{a}(t;\,\cdot\,):\,\mathfrak{A}(t)\to [0,2\pi)$ is well-defined  on $\mathfrak{A}(t)$ by the condition that  $\mathfrak{a}(t;\vect{v})=s\in [0,2\pi)$ satisfies \eqref{v2:1503171442} for $\vect{v}\in\vect{\mathfrak{A}}(t)$. By the definition of $\mathcal{A}(t)$, the possible values of the defined function  are precisely the elements of $\mathcal{A}(t)$. Thus, $\mathfrak{a}(t;\,\cdot\,)$ is a function from $\mathfrak{A}(t)$ onto $\mathcal{A}(t)$. We always have  $\vect{1}\in \vect{\mathfrak{A}}(t)$ and $\mathfrak{a}(t;\vect{1})=t$.

\medskip

If  $\vect{v},-\vect{v}\in \vect{\mathfrak{A}}(t)$, then there exist $s,r\in [0,2\pi)$  satisfying \eqref{v2:1503171442} and
 \begin{equation}\label{v2:1503242042}
\forall\,\mathsf{i}\in\{1,\ldots,\mathsf{d}\}:\quad q_{\mathsf{i}}(r-(-v_{\mathsf{i}})t)-(1-(-v_{\mathsf{i}}))\alpha_{\mathsf{i}}\eqsim 0.
\end{equation} 
Adding \eqref{v2:1503171442} and \eqref{v2:1503242042} yields $2\alpha_{\mathsf{i}}\eqsim q_{\mathsf{i}}(s+r)$ for all $\mathsf{i}$. Let $t'=(s+r)/2$. Now, $2\alpha_{\mathsf{i}}\eqsim 2q_{\mathsf{i}}t'$, $\mathsf{i}\in\{1,\ldots,\mathsf{d}\}$, signifies the existence of $k_{\mathsf{i}}\in\mathbb{Z}$ with $\alpha_{\mathsf{i}}=q_{\mathsf{i}}t'+k_{\mathsf{i}}\pi$ and, therefore, implies
\eqref{v2:1509091713} with $u'_{\mathsf{i}}=(-1)^{k_{\mathsf{i}}}u_{\mathsf{i}}$, $\mathsf{i}\in\{1,\ldots,\mathsf{d}\}$.  We have shown:
 \begin{equation}\label{v2:1503252146}
\text{$\exists\,t\in[0,2\pi),\,\exists\,\vect{v}\in \vect{\mathfrak{A}}(t)$ with $-\vect{v}\in \vect{\mathfrak{A}}(t)$ $\Longrightarrow$ $\vect{\mathfrak{l}}$ is degenerate.}
\end{equation}
Let $\vect{x}=\vect{\mathfrak{l}}(t)$. Since $\#\vect{\mathfrak{A}}(t)\leq \#\{-1,1\}^{\mathsf{d}}$, we have $\#\mathfrak{U}_{\vect{\alpha},\vect{u}}^{(\indexvectq)}(\vect{x})=\#\mathcal{A}(t)\leq 2^{\mathsf{d}}$.
If $\vect{\mathfrak{l}}$ is  non-degenerate, then  \eqref{v2:1503252146} implies $\#\vect{\mathfrak{A}}(t)\leq 2^{\mathsf{d}-1}$,
and thus $\#\mathfrak{U}_{\vect{\alpha},\vect{u}}^{(\indexvectq)}(\vect{x})=\#\mathcal{A}(t)\leq 2^{\mathsf{d}-1}$.

\medskip

\noindent Next, we consider the following assumption:
\begin{equation}\label{v2:1508230019} 
\text{$\vect{\mathfrak{l}}$ is non-degenerate and there are infinitely many $\vect{x}$ with $\#\mathfrak{U}_{\vect{\alpha},\vect{u}}^{(\indexvectq)}(\vect{x})>1$.}
\end{equation}
Then, we can find sequences  $(r_m)$, $(s_m)$ with values in $[0,2\pi)$, such that  $\vect{\mathfrak{l}}(r_m)=\vect{\mathfrak{l}}(s_m)$ as well as 
 $r_m\neq s_m$ and $s_m\neq s_l$ if $m\neq l$ for all $m,l \in \mathbb{N}$. Further, there exist $\vect{v}_m \in \{-1,1\}^{\mathsf{d}}$ such that  $r_m=\mathfrak{a}(s_m;\vect{v}_m)$. Since $[0,2\pi]$ is compact and $\{-1,1\}^{\mathsf{d}}$ is finite, we can assume without loss of generality that the sequence $(s_m)$ is convergent and that the sequence 
 $(\vect{v}_m)$ is constant, i.e. $\vect{v}_m=\vect{v}$ for all $m \in \mathbb{N}$. Since $r_m\neq s_m$, we have $\vect{v}\neq \vect{1}$. Furthermore, since $\vect{1}\in\vect{\mathfrak{A}}(s_m)$, statement \eqref{v2:1503252146} and assumption \eqref{v2:1508230019} imply $\vect{v}\neq -\vect{1}$. Therefore, we can find  $\mathsf{i},\mathsf{j}$ such that $v_{\mathsf{i}}=-1$, $v_{\mathsf{j}}=1$, and \eqref{v2:1503171442} implies
that for all $m$ there is a $k_m\in\mathbb{Z}$ such that
\[2qs_m-2a_{\mathsf{i}}\alpha_{\mathsf{i}}=a_{\mathsf{i}}(q_{\mathsf{i}}(r_m+s_m)-2\alpha_{\mathsf{i}})-a_{\mathsf{j}}q_{\mathsf{j}}(r_m-s_m) =2\pi k_m.\]
Since the left hand side of this equation converges, the sequence $(k_m)$ converges as well.
 The integer-valued sequence $(k_m)$ is constant for sufficiently large $m$ and so is $(s_m)$.  
Therefore, assumption \eqref{v2:1508230019} implies a contradiction. 

\medskip

The statement a) is now completely proven. By definition \eqref{v2:1509091713}, it remains to show the statement  b) for $\vect{\alpha}=\vect{0}$. In this case, we have $\vect{\mathfrak{l}}(t) = \vect{\mathfrak{l}}(2\pi-t)$.
Therefore, $\# \mathcal{A}(t) \geq 2$ for all $t \in (0,2 \pi) \setminus \{\pi\}$ and  $\# \mathcal{A}(t)$ is even for all $t \in (0,2 \pi) \setminus \{\pi\}$. 
If $s\in  \mathcal{A}(0)$, then \eqref{v2:1508222356} implies $s=0$, therefore  $\# \mathcal{A}(0)=1$. 
If  $s\in \mathcal{A}(\pi)$, then \eqref{v2:1508222356} implies $s=0$ or $ s=\pi$. Thus, since $\# \mathcal{A}(0)=1$, we also have  $\#\mathcal{A}(\pi)=1$.  

Finally, we  consider  the case  $\#\mathcal{A}(t)\geq 3$. Then, $\#\vect{\mathfrak{A}}(t)\geq 3$ and there exists $\vect{v}\notin\{-\vect{1},\vect{1}\}$
such that $s=\mathfrak{a}(t;\vect{v})\in \mathcal{A}(t)$. There are  $\mathsf{i},\mathsf{j}$ such that $v_{\mathsf{i}}=-1$, $v_{\mathsf{j}}=1$. Now, applying  \eqref{v2:1503171442} with  $\vect{\alpha}=\vect{0}$ and using the notation $q, q_{\mathsf{i}}, a_{\mathsf{i}}$  introduced at the beginning of the proof, we obtain
\[2qt=a_{\mathsf{i}}q_{\mathsf{i}}(s+t)-a_{\mathsf{j}} q_{\mathsf{j}}(s-t)\eqsim 0.\]
Therefore, we can find $l\in \{1,\ldots,2q-1\}\setminus\{q\}$ such that $t=l\pi/q$.
In particular, there are only finitely many $t\in [0,2\pi)$ such that $\#\mathcal{A}(t)\geq 3$.
\end{proof}

\medskip

For our purposes, the following  Lissajous trajectories turn out to be appropriate.  
For $\vect{n}\in\mathbb{N}^{\mathsf{d}}$  satisfying the general assumption \eqref{v2:1509151442} as well as 
for $\epsilon\in\{1,2\}$, $\vect{\kappa}\in\mathbb{Z}^{\mathsf{d}}$ and $\vect{u}\in\{-1,1\}^{\mathsf{d}}$, we consider the Lissajous curves  $\vect{\ell}^{(\epsilon\vect{n})}_{\vect{\kappa},\vect{u}}\,:\,\mathbb{R}\to [-1,1]^{\mathsf{d}}$ defined by 
\[\vect{\ell}^{(\epsilon\vect{n})}_{\vect{\kappa},\vect{u}}(t)=\left( u_1 \cos \left(  \;\!\p_1[\vect{n}]  \cdot t- \dfrac{\kappa_1\pi}{\epsilon n_1}\right), \cdots, 
u_{\mathsf{d}} \cos \left( \;\!\p_{\mathsf{d}}[\vect{n}] \cdot t- \dfrac{\kappa_{\mathsf{d}}\pi}{\epsilon n_{\mathsf{d}}}\right) \right), \quad t \in \mathbb{R},\]
where $\p_{\mathsf{i}}[\vect{n}]$, $\mathsf{i} \in \{1, \ldots, \mathsf{d}\}$, denote the 
products given in \eqref{v2:201609011131}. The parameter $\vect{u}$ can be expressed also in terms of different values of the parameter $\vect{\kappa}$. However, this redundancy in the definition will turn out to be useful.  For example, in this way it is possible to restrict the considerations to particular values of $\vect{\kappa} \in \mathbb{Z}^{\mathsf{d}}$. 
Statement b) of the following Proposition gives an illustrative description of the considered class of Lissajous curves.  Furthermore, note that always $\vect{\ell}^{(\vect{n})}_{\vect{\kappa},\vect{u}}=\vect{\ell}^{(2\vect{n})}_{2\vect{\kappa},\vect{u}}$. 

\begin{proposition} \label{v2:201510131412}  Let $\vect{n}\in\mathbb{N}^{\mathsf{d}}$  satisfy \eqref{v2:1509151442}. Further, let $\vect{\kappa}\in\mathbb{Z}^{\mathsf{d}}$ and $\vect{u}\in\{-1,1\}^{\mathsf{d}}$.

\medskip

a)  There exist $\vect{\kappa}'=(\kappa'_1,\ldots,\kappa'_{\mathsf{d}})$ with  $\kappa'_1=0$ and $\kappa'_{\mathsf{i}} \in \{0,1\}$, $\mathsf{i} \in \{2, \ldots,\mathsf{d}\}$, as well as  $r'\in\mathbb{Z}$ and $\vect{u}'\in\{-1,1\}^{\mathsf{d}}$, such that
\begin{equation}\label{v2:1509151217}
\vect{\ell}^{(2\vect{n})}_{\vect{\kappa},\vect{u}}(\,\cdot\,-t_{r'}^{(2 \vect{n})})=\vect{\ell}^{(2\vect{n})}_{\vect{\kappa}',\vect{u}'}.
\end{equation}

b) There exist
$\vect{\delta} \in\{0,1\}^{\mathsf{d}}$, $r' \in \mathbb{Z}$  and $\vect{u}'\in\{-1,1\}^{\mathsf{d}}$ such that \eqref{v2:1509151217} is satisfied with $\kappa'_{\mathsf{i}}= \delta_{\mathsf{i}} n_{\mathsf{i}}$,
$\mathsf{i}\in\{1,\ldots,\mathsf{d}\}$. In this way, the curve can be written as 
\[\vect{\ell}^{(2\vect{n})}_{\vect{\kappa},\vect{u}}(t -t_{r'}^{(2 \vect{n})} ) = \left(u_1'\trig_1(\p_1[\vect{n}]\cdot t), \cdots,
u_{\mathsf{d}}'\trig_{\mathsf{d}}(\p_{\mathsf{d}}[\vect{n}]\cdot t)\right),\quad t\in\mathbb{R},\]
with some $u_{\mathsf{i}}'\in\{-1,1\}$ and some 
$\trig_{\,\mathsf{i}}\in\{\sin,\cos\}$ for all $\mathsf{i}\in\{1,\ldots,\mathsf{d}\}$.
\end{proposition}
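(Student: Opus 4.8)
The plan is to reduce both statements to a single arithmetic normalization of the phase vector $\vect{\kappa}$, built from two elementary transformations that leave the Lissajous curve unchanged up to a time shift and a change of the sign vector.

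First I would record the effect of a time shift. A direct computation using $\p_{\mathsf{i}}[\vect{n}]\cdot t^{(2\vect{n})}_{r'} = \p_{\mathsf{i}}[\vect{n}]\, r'\pi/(2\p[\vect{n}]) = r'\pi/(2n_{\mathsf{i}})$ shows that $\vect{\ell}^{(2\vect{n})}_{\vect{\kappa},\vect{u}}(\,\cdot\,-t^{(2\vect{n})}_{r'}) = \vect{\ell}^{(2\vect{n})}_{\vect{\kappa}+r'\vect{1},\vect{u}}$; that is, shifting time by $t^{(2\vect{n})}_{r'}$ simply adds the same integer $r'$ to every entry of $\vect{\kappa}$. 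Second, since $\cos(\theta-\pi)=-\cos\theta$, replacing $\kappa_{\mathsf{i}}$ by $\kappa_{\mathsf{i}}+2n_{\mathsf{i}}$ while simultaneously flipping the sign $u_{\mathsf{i}}$ gives the identical curve; this is exactly the redundancy between $\vect{u}$ and $\vect{\kappa}$ noted before the Proposition. Consequently each entry $\kappa_{\mathsf{i}}$ may be reduced freely modulo $2n_{\mathsf{i}}$ at the cost of recording a sign in $u'_{\mathsf{i}}$, while the time shift adds one common integer $r'$ to all entries.

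With these two moves, proving a) amounts to choosing $r'\in\mathbb{Z}$ and targets $\kappa'_{\mathsf{i}}$ with $\kappa_{\mathsf{i}}+r'\equiv\kappa'_{\mathsf{i}}\pmod{2n_{\mathsf{i}}}$, where $\kappa'_1=0$ and $\kappa'_{\mathsf{i}}\in\{0,1\}$ for $\mathsf{i}\geq2$; equivalently, $r'$ must solve the simultaneous congruences $r'\equiv\kappa'_{\mathsf{i}}-\kappa_{\mathsf{i}}\pmod{2n_{\mathsf{i}}}$. I would solve this with the Chinese remainder theorem \eqref{v2:1505032140}, applied to the moduli $k_{\mathsf{i}}=2n_{\mathsf{i}}$. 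Because \eqref{v2:1509151442} makes the $n_{\mathsf{i}}$ pairwise relatively prime, $\gcd\{2n_{\mathsf{i}},2n_{\mathsf{j}}\}=2$, so the compatibility condition \eqref{v2:1503211222} collapses to a single parity condition modulo $2$ among the values $\kappa'_{\mathsf{i}}-\kappa_{\mathsf{i}}$. This is precisely where the still-free binary parameters $\kappa'_{\mathsf{i}}\in\{0,1\}$ ($\mathsf{i}\geq2$) are spent: choosing $\kappa'_{\mathsf{i}}\equiv\kappa_{\mathsf{i}}-\kappa_1\pmod2$ renders all residues congruent modulo $2$, the hypotheses of \eqref{v2:1503211222} hold, and a solution $r'$ exists. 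The sign vector $\vect{u}'$ is then read off from the number of multiples of $2n_{\mathsf{i}}$ removed from each entry. Part b) follows from the identical scheme with targets $\kappa'_{\mathsf{i}}=\delta_{\mathsf{i}}n_{\mathsf{i}}\in\{0,n_{\mathsf{i}}\}$, using that $\kappa'_{\mathsf{i}}=0$ leaves $\cos$ in the $\mathsf{i}$-th slot while $\kappa'_{\mathsf{i}}=n_{\mathsf{i}}$ turns it into $\cos(\,\cdot\,-\pi/2)=\sin$; here the parity of $\delta_{\mathsf{i}}n_{\mathsf{i}}$ is adjustable through $\delta_{\mathsf{i}}$ whenever $n_{\mathsf{i}}$ is odd, and since at most one $n_{\mathsf{i}}$ can be even, the common target parity can always be matched.

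The main obstacle I anticipate is the compatibility bookkeeping in this CRT step. Unlike the pairwise coprime case, the moduli $2n_{\mathsf{i}}$ share the common factor $2$, so $r'$ need not exist for arbitrary targets; the whole point is that the freedom in the binary data ($\kappa'_{\mathsf{i}}$ in a), $\delta_{\mathsf{i}}$ in b)) exactly absorbs the single mod-$2$ obstruction. Verifying that these parities can always be aligned, together with the slightly fiddly even case of some $n_{\mathsf{i}}$ in part b), is the part requiring care; the preliminary shift computation and the sign-flip identity are routine.
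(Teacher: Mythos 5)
Your proposal is correct and follows essentially the same route as the paper: a time shift $t^{(2\vect{n})}_{r'}$ adding a common integer to all entries of $\vect{\kappa}$, the redundancy $\kappa_{\mathsf{i}}\mapsto\kappa_{\mathsf{i}}+2n_{\mathsf{i}}$ versus flipping $u_{\mathsf{i}}$, and the Chinese remainder theorem on the moduli $2n_{\mathsf{i}}$ with the binary freedom in $\kappa'_{\mathsf{i}}$ (resp.\ $\delta_{\mathsf{i}}$) absorbing the single mod-$2$ compatibility condition. The only cosmetic difference is in b), where the paper first solves the congruences modulo the pairwise coprime $n_{\mathsf{i}}$ (so no compatibility condition arises) and reads off $\vect{\delta}$ afterwards, whereas you fix the parities of $\delta_{\mathsf{i}}n_{\mathsf{i}}$ in advance; both work.
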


\begin{proof} For $t'' = -\kappa_1\pi/(2\p[\vect{n}])$, we get $\vect{\ell}^{(2\vect{n})}_{\vect{\kappa},\vect{u}}(\,\cdot\,-t'')=\vect{\ell}^{(2\vect{n})}_{\vect{\kappa}'',\vect{u}}$
with $\kappa''_{\mathsf{i}}=-\kappa_{\mathsf{i}}+\kappa_1$, $\mathsf{i}\in\{1,\ldots,\mathsf{d}\}$, in particular $\kappa''_1=0$.
 Let $b_{\mathsf{i}}\in\mathbb{Z}$ such that $0\leq b_{\mathsf{i}}+\kappa''_{\mathsf{i}}/2<1$.
With $a_{\mathsf{i}} = 2 b_{\mathsf{i}}$ and $k_{\mathsf{i}} = 2 n_{\mathsf{i}}$ condition \eqref{v2:1503211222} is satisfied. Thus, by the Chinese remainder theorem there exists $r \in \mathbb{Z}$ with $r\equiv  2 b_{\mathsf{i}} \tmod 2 n_{\mathsf{i}}$ for all $\mathsf{i}\in\{1,\ldots,\mathsf{d}\}$. 
Therefore, we can find $\rho_{\mathsf{i}} \in \{0,1\}$ such that
\[\forall\,\mathsf{i}\in\{1,\ldots,\mathsf{d}\}:\quad r\equiv  2 b_{\mathsf{i}} + 2 \rho_{\mathsf{i}}n_{\mathsf{i}}\mod 4n_{\mathsf{i}}.\]
We have  \eqref{v2:1509151217} 
with  $t'= t_{r'}^{(2 \vect{n})}$,  $r' = r - \kappa_1$, and 
$\kappa'_{\mathsf{i}}=2b_{\mathsf{i}}+\kappa''_{\mathsf{i}}$, $u'_{\mathsf{i}}=(-1)^{\rho_{\mathsf{i}}}u_{\mathsf{i}}$.

Now, we consider  the statement b). By the Chinese remainder theorem there exists  $r' \in \mathbb{Z}$ with $r'\equiv  -\kappa_{\mathsf{i}} \tmod
n_{\mathsf{i}}$ for all $\mathsf{i}$. Hence, there exists 
$\vect{\delta}\in\{0,1\}^{\mathsf{d}}$ such that $r'\equiv
\delta_{\mathsf{i}}n_{\mathsf{i}} -\kappa_{\mathsf{i}} \tmod
2n_{\mathsf{i}}$ for all $\mathsf{i}$. Therefore, we can find $\rho_{\mathsf{i}} \in \{0,1\}$ such that $r'\equiv
\delta_{\mathsf{i}}n_{\mathsf{i}} -\kappa_{\mathsf{i}}+ 2 \rho_{\mathsf{i}}n_{\mathsf{i}}\tmod
4n_{\mathsf{i}}$ for all $\mathsf{i}$.
 We have  \eqref{v2:1509151217} 
with  
$\kappa'_{\mathsf{i}}=\delta_{\mathsf{i}} n_{\mathsf{i}}$, $u'_{\mathsf{i}}=(-1)^{\rho_{\mathsf{i}}}u_{\mathsf{i}}$.
\end{proof}

\begin{corollary} \label{v2:1511121614}  Let $\vect{n}\in\mathbb{N}^{\mathsf{d}}$  satisfy \eqref{v2:1509151442}. Further, let $\vect{\kappa}\in\mathbb{Z}^{\mathsf{d}}$ and $\vect{u}\in\{-1,1\}^{\mathsf{d}}$.

 a) The Lissajous curve $\vect{\ell}^{(2\vect{n})}_{\vect{\kappa},\vect{u}}$  is degenerate if and only if
\begin{equation}\label{v2:1509092235}
\kappa_{\mathsf{i}} \equiv \kappa_{\mathsf{j}} \mod 2 \qquad \text{for all\quad $\mathsf{i},\mathsf{j}\in\{1,\ldots,\mathsf{d}\}$}.
\end{equation}
In this case, there exist $t' \in \mathbb{R}$ and $\vect{u}' \in \{-1,1\}^{\mathsf{d}}$ such that
\[ \vect{\ell}^{(\epsilon\vect{n})}_{\vect{\kappa},\vect{u}}(t-t') = \vect{\ell}^{(\vect{n})}_{\vect{0},\vect{u}'}(t) = \left( u_1' \cos \left( \p_{1}[\vect{n}] \cdot t \right), \cdots, 
u_{\mathsf{d}}' \cos \left( \p_{\mathsf{d}}[\vect{n}] \cdot t \right) \right),\quad t \in \mathbb{R}. \]
b) The curve 
 $\vect{\ell}^{(\vect{n})}_{\vect{\kappa},\vect{u}}=\vect{\ell}^{(2\vect{n})}_{2\vect{\kappa},\vect{u}}$ is degenerate.
\end{corollary}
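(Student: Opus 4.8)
The plan is to attack part a) directly from the definition \eqref{v2:1509091713} of degeneracy, turning the defining functional equation into an arithmetic system of congruences that the Chinese remainder theorem resolves; part b) will then be an immediate consequence.

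First I would unfold what degeneracy means for $\vect{\ell}^{(2\vect{n})}_{\vect{\kappa},\vect{u}}$. By \eqref{v2:1509091713} the curve is degenerate precisely when there are a shift $t'\in\mathbb{R}$ and signs $\vect{u}'$ with $\vect{\ell}^{(2\vect{n})}_{\vect{\kappa},\vect{u}}(\,\cdot\,-t')=\vect{\ell}^{(2\vect{n})}_{\vect{0},\vect{u}'}$. Comparing the $\mathsf{i}$-th components, this requires $u_{\mathsf{i}}\cos(\p_{\mathsf{i}}[\vect{n}]t-\phi_{\mathsf{i}})=u_{\mathsf{i}}'\cos(\p_{\mathsf{i}}[\vect{n}]t)$ for all $t$, where $\phi_{\mathsf{i}}=\p_{\mathsf{i}}[\vect{n}]t'+\kappa_{\mathsf{i}}\pi/(2n_{\mathsf{i}})$. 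Since two cosines of the same nonzero frequency agree up to sign iff their phases differ by a multiple of $\pi$, this is equivalent to $\phi_{\mathsf{i}}\in\pi\mathbb{Z}$ for every $\mathsf{i}$ (with $u_{\mathsf{i}}'$ then forced to be $\pm u_{\mathsf{i}}$). Hence $\vect{\ell}^{(2\vect{n})}_{\vect{\kappa},\vect{u}}$ is degenerate iff there is a single $t'$ making all $\phi_{\mathsf{i}}$ integer multiples of $\pi$.

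Next I would convert this into congruences. The crucial observation is $n_{\mathsf{i}}\,\p_{\mathsf{i}}[\vect{n}]=\p[\vect{n}]$, so multiplying $\phi_{\mathsf{i}}=m_{\mathsf{i}}\pi$ by $2n_{\mathsf{i}}/\pi$ turns it into $y+\kappa_{\mathsf{i}}=2n_{\mathsf{i}}m_{\mathsf{i}}$, with the same unknown $y:=2\p[\vect{n}]t'/\pi$ appearing in every coordinate. Thus degeneracy is equivalent to the solvability in $y\in\mathbb{Z}$ of the simultaneous system $y\equiv-\kappa_{\mathsf{i}}\tmod 2n_{\mathsf{i}}$, $\mathsf{i}\in\{1,\ldots,\mathsf{d}\}$. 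Because the $n_{\mathsf{i}}$ are pairwise relatively prime, $\gcd\{2n_{\mathsf{i}},2n_{\mathsf{j}}\}=2$, so by the Chinese remainder theorem (invoking both its sufficiency and the necessity of \eqref{v2:1503211222}) this system is solvable iff $-\kappa_{\mathsf{i}}\equiv-\kappa_{\mathsf{j}}\tmod 2$ for all $\mathsf{i},\mathsf{j}$, which is exactly \eqref{v2:1509092235}. For the additional claim, any solution $t'$ yields $\vect{\ell}^{(2\vect{n})}_{\vect{\kappa},\vect{u}}(\,\cdot\,-t')=\vect{\ell}^{(2\vect{n})}_{\vect{0},\vect{u}'}=\vect{\ell}^{(\vect{n})}_{\vect{0},\vect{u}'}$ (the phase being zero makes $\epsilon$ irrelevant), which is the displayed cosine form; the case $\epsilon=1$ follows by running the same reduction for $2\vect{\kappa}$, whose components are all even.

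Finally, part b) is immediate: since $\vect{\ell}^{(\vect{n})}_{\vect{\kappa},\vect{u}}=\vect{\ell}^{(2\vect{n})}_{2\vect{\kappa},\vect{u}}$ and the components of $2\vect{\kappa}$ are even, the parity condition \eqref{v2:1509092235} holds trivially for $2\vect{\kappa}$, so part a) gives degeneracy. I expect the only delicate point to be the faithful reduction of the functional equation to the arithmetic condition, in particular the identity $n_{\mathsf{i}}\p_{\mathsf{i}}[\vect{n}]=\p[\vect{n}]$ that couples all coordinates into one unknown $y$, together with the observation that the relevant pairwise $\gcd$ of the moduli collapses to $2$; after that, everything is handled by the Chinese remainder theorem exactly as quoted in the introduction. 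As an alternative one could first apply Proposition \ref{v2:201510131412}a) to reduce to $\kappa'_1=0$, $\kappa'_{\mathsf{i}}\in\{0,1\}$, where degeneracy amounts to all $\kappa'_{\mathsf{i}}$ vanishing, but the direct route above seems shortest.
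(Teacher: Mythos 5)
Your proof is correct, but it takes a more direct route than the paper. The paper deduces the corollary from Proposition \ref{v2:201510131412}\,a): it first normalizes the phase vector to a $\vect{\kappa}'$ with $\kappa'_1=0$ and $\kappa'_{\mathsf{i}}\in\{0,1\}$, observes that the parity condition \eqref{v2:1509092235} is invariant under this normalization, and then reads off degeneracy from whether all $\kappa'_{\mathsf{i}}$ vanish. You instead unfold the definition \eqref{v2:1509091713} directly: the phase-matching condition $\p_{\mathsf{i}}[\vect{n}]\,t'+\kappa_{\mathsf{i}}\pi/(2n_{\mathsf{i}})\in\pi\mathbb{Z}$, after multiplication by $2n_{\mathsf{i}}/\pi$ and the identity $n_{\mathsf{i}}\p_{\mathsf{i}}[\vect{n}]=\p[\vect{n}]$, becomes the single system $y\equiv-\kappa_{\mathsf{i}}\tmod 2n_{\mathsf{i}}$ in the integer unknown $y=2\p[\vect{n}]t'/\pi$ (integrality of $y$ being forced by any one of the equations), and you then invoke both the sufficiency and the necessity clauses of the Chinese remainder theorem with $\gcd\{2n_{\mathsf{i}},2n_{\mathsf{j}}\}=2$. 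This buys you a self-contained equivalence that does not depend on Proposition \ref{v2:201510131412}; in particular, the ``only if'' direction of a) is fully explicit in your version, whereas the paper leaves the non-degeneracy of the normalized curves with some $\kappa'_{\mathsf{j}}=1$ implicit. The underlying arithmetic mechanism (pairwise compatibility modulo $\gcd$ of the moduli, condition \eqref{v2:1503211222}) is of course the same in both arguments, and your closing remark correctly identifies the paper's normal-form reduction as the alternative. Part b) and the treatment of $\epsilon=1$ via $2\vect{\kappa}$ match the paper.
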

\begin{proof} Considering  $\vect{\kappa}'$ given in the proof of Proposition  \ref{v2:201510131412} we see: the condition \eqref{v2:1509092235} holds if and only if \eqref{v2:1509092235} holds with $\vect{\kappa}$ replaced by the parameter $\vect{\kappa}'$. Now, Proposition \ref{v2:201510131412} implies the assertion a). Statement b) follows immediately.
\end{proof}

For the special degenerate Lissajous curves in the standard form $\vect{\ell}^{(\vect{n})}_{\vect{0},\vect{u}}$ we can concretize statement b) of Theorem~\ref{v2:1503211414}. To this end, we consider the sets  
\begin{equation} \label{v2:1509171731}
 \LD^{(\vect{n})}_{\vect{u}} = \left\{\,\vect{\ell}^{(\vect{n})}_{\vect{0},\vect{u}}(t^{(\vect{n})}_{l})\,|\,  l\in \{0,\ldots,\p[\vect{n}]\} \,\right\}.
\end{equation}
Further, for $\mathsf{M}\subseteq \{1,\ldots,\mathsf{d}\}$, we consider the subsets $\vect{F}^{\mathsf{d}}_{\mathsf{M}}$ of $[-1,1]^{\mathsf{d}}$ given by 
\begin{equation} \label{v2:1609011140}
\vect{F}^{\mathsf{d}}_{\mathsf{M}}=\left\{\,\vect{x}\in [-1,1]^{\mathsf{d}}\,\left|\,(\,\forall\,\mathsf{i}\in \mathsf{M}\!: x_{\mathsf{i}}\in (-1,1)\,)\;\text{and}\; (\,\forall\,\mathsf{i}\notin \mathsf{M}\!: x_{\mathsf{i}}\in\{-1,1\}\,)\right.\,\right\}.
\end{equation}

\begin{theorem}\label{v2:1503151619}Let $\vect{n}\in\mathbb{N}^{\mathsf{d}}$  satisfy \eqref{v2:1509151442}. Further, let $\vect{u}\in\{-1,1\}^{\mathsf{d}}$.

For $t\in  [0,2\pi)$, let $\mathcal{A}^{(\vect{n})}(t)$ be the set of all $s\in [0,2\pi)$ with $\vect{\ell}^{(\vect{n})}_{\vect{0},\vect{u}}(s)=\vect{\ell}^{(\vect{n})}_{\vect{0},\vect{u}}(t)$. 

\medskip

\noindent a) We have 
 
 \vspace{-1.4em}
 
 \[  \begin{array}{ll}
      \#\mathcal{A}^{(\vect{n})}(t) = 1 & \text{if}\quad t \in \{ t^{(\vect{n})}_{0}, t^{(\vect{n})}_{\p[\vect{n}]}\}=\{0,\pi\}, \\
      \#\mathcal{A}^{(\vect{n})}(t) = 2 & \text{if}\quad t \in [0,2\pi) \setminus \{\,t^{(\vect{n})}_{l}\,|\, l\in \{0,\ldots,2\p[\vect{n}]-1\}\,\}, \\
     \# \mathcal{A}^{(\vect{n})}(t) \geq 2 & \text{if}\quad t \in \{\,t^{(\vect{n})}_{l}\,|\, l\in \{1,\ldots,2\p[\vect{n}]-1\}\setminus\{\p[\vect{n}]\}\,\}.
    \end{array}
\]   

\noindent b) Let $\mathsf{M}\subseteq \{1,\ldots,\mathsf{d}\}$. For $l\in \{0,\ldots,2\p[\vect{n}]-1\}$, the following are equivalent:\\
\hspace*{2em} i) For all $\mathsf{i}\in \mathsf{M}$: $l\not\equiv 0\tmod n_{\mathsf{i}}$, and for all $\mathsf{i}\in \{1,\ldots,\mathsf{d}\}\setminus \mathsf{M}$: $l\equiv 0\tmod n_{\mathsf{i}}$.
\hspace*{2em}  ii) The value $\vect{\ell}^{(\vect{n})}_{\vect{0},\vect{u}}(t^{(\vect{n})}_{l})$ is an element of the set $\vect{F}^{\mathsf{d}}_{\mathsf{M}}$.

\noindent \hspace*{1em} If i) or ii) is satisfied, then  $\#\mathcal{A}^{(\vect{n})}(t^{(\vect{n})}_{l}) =2^{\#\mathsf{M}}$. Furthermore,
\begin{equation}\label{v2:1505031648}\#\LD^{(\vect{n})}_{\vect{u}}=\dfrac1{2^{\mathsf{d}-1}}\p[\vect{n}+\vect{1}],\quad 
\#(\LD^{(\vect{n})}_{\vect{u}}\cap\vect{F}^{\mathsf{d}}_{\mathsf{M}})=\dfrac1{2^{\#\mathsf{M}-1}}\tprod_{\mathsf{i}\in \mathsf{M}}(n_{\mathsf{i}}-1).
\end{equation}

\end{theorem}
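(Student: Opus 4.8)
The plan is to convert the coincidence condition $\vect{\ell}^{(\vect{n})}_{\vect{0},\vect{u}}(s)=\vect{\ell}^{(\vect{n})}_{\vect{0},\vect{u}}(t)$ into integer congruences and to read off all three parts from counting their solutions. The relevant frequency vector is $(\p_1[\vect{n}],\ldots,\p_{\mathsf{d}}[\vect{n}])$; since the $n_{\mathsf{i}}$ are pairwise relatively prime one has $\gcd\{\p_1[\vect{n}],\ldots,\p_{\mathsf{d}}[\vect{n}]\}=1$ and $\lcm\{\p_1[\vect{n}],\ldots,\p_{\mathsf{d}}[\vect{n}]\}=\p[\vect{n}]$, so Theorem \ref{v2:1503211414} applies. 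Writing $s=\sigma\pi/\p[\vect{n}]$ and $t=\tau\pi/\p[\vect{n}]$ with $\sigma,\tau\in[0,2\p[\vect{n}])$ and specializing condition \eqref{v2:1503171442} to $\vect{\alpha}=\vect{0}$, I would obtain the clean equivalence that $s\in\mathcal{A}^{(\vect{n})}(t)$ if and only if there is $\vect{v}\in\{-1,1\}^{\mathsf{d}}$ with $\sigma\equiv v_{\mathsf{i}}\tau\mod 2n_{\mathsf{i}}$ for all $\mathsf{i}$. Because $\gcd\{2n_{\mathsf{i}},2n_{\mathsf{j}}\}=2$ and $\lcm\{2n_1,\ldots,2n_{\mathsf{d}}\}=2\p[\vect{n}]$, the Chinese remainder theorem \eqref{v2:1503211222}--\eqref{v2:1505032140} tells me that for fixed $\vect{v}$ this system is solvable exactly when the targets $v_{\mathsf{i}}\tau$ are pairwise congruent modulo $2$, and then determines $\sigma$ uniquely in $[0,2\p[\vect{n}])$.

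For part a), the choice $\vect{v}=\vect{1}$ always gives $\sigma=\tau$ and $\vect{v}=-\vect{1}$ gives $\sigma=2\p[\vect{n}]-\tau$, the doubly-traversed reflection; these coincide only for $\tau\in\{0,\p[\vect{n}]\}$, which yields the lower bound $\#\mathcal{A}^{(\vect{n})}(t)\geq2$ for $t\notin\{0,\pi\}$. For the middle line I would show that when $\tau\notin\mathbb{Z}$ no sign vector with mixed entries can occur: if $v_{\mathsf{i}}=1$ and $v_{\mathsf{j}}=-1$, then pairwise solvability forces $2\tau\equiv0\mod2$, hence $\tau\in\mathbb{Z}$; so only $\pm\vect{1}$ survive and $\#\mathcal{A}^{(\vect{n})}(t)=2$. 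For the first line I would observe that at $\tau\in\{0,\p[\vect{n}]\}$ the targets $v_{\mathsf{i}}\tau$ are independent of $v_{\mathsf{i}}$ modulo $2n_{\mathsf{i}}$ (they equal $0$, resp. $0$ or $n_{\mathsf{i}}$), so all $\vect{v}$ give the same class and $\#\mathcal{A}^{(\vect{n})}(t)=1$.

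For part b), the $\mathsf{i}$-th coordinate of $\vect{\ell}^{(\vect{n})}_{\vect{0},\vect{u}}(t^{(\vect{n})}_{l})$ is $u_{\mathsf{i}}\cos(l\pi/n_{\mathsf{i}})$, which lies in $\{-1,1\}$ exactly when $n_{\mathsf{i}}\mid l$ and in $(-1,1)$ otherwise; this is precisely the dictionary between i) and ii), so that $\vect{\ell}^{(\vect{n})}_{\vect{0},\vect{u}}(t^{(\vect{n})}_{l})\in\vect{F}^{\mathsf{d}}_{\mathsf{M}}$ iff $\mathsf{M}=\mathsf{M}(l):=\{\mathsf{i}:n_{\mathsf{i}}\nmid l\}$. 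To count $\#\mathcal{A}^{(\vect{n})}(t^{(\vect{n})}_{l})$ with $\tau=l\in\mathbb{Z}$, note that now every $\vect{v}$ gives a unique (integer) solution $\sigma_{\vect{v}}$, and $\sigma_{\vect{v}}=\sigma_{\vect{v}'}$ iff $(v_{\mathsf{i}}-v'_{\mathsf{i}})l\equiv0\mod 2n_{\mathsf{i}}$ for all $\mathsf{i}$, i.e. iff $\vect{v}$ and $\vect{v}'$ agree on $\mathsf{M}(l)$; hence the number of distinct solutions is the number of restrictions of sign vectors to $\mathsf{M}(l)$, namely $2^{\#\mathsf{M}}$.

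For the cardinalities I would count over the full range $l\in\{0,\ldots,2\p[\vect{n}]-1\}$ and transfer to $\{0,\ldots,\p[\vect{n}]\}$ via the reflection $\vect{\ell}^{(\vect{n})}_{\vect{0},\vect{u}}(t^{(\vect{n})}_{l})=\vect{\ell}^{(\vect{n})}_{\vect{0},\vect{u}}(t^{(\vect{n})}_{2\p[\vect{n}]-l})$, which shows the two ranges yield the same point set $\LD^{(\vect{n})}_{\vect{u}}$. Since for integer $l$ every point of $\mathcal{A}^{(\vect{n})}(t^{(\vect{n})}_{l})$ is again a grid point, the fibre of $l\mapsto\vect{\ell}^{(\vect{n})}_{\vect{0},\vect{u}}(t^{(\vect{n})}_{l})$ over a point with index set $\mathsf{M}$ has size $2^{\#\mathsf{M}}$ and stays inside $\{l:\mathsf{M}(l)=\mathsf{M}\}$. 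Counting the latter set by the Chinese remainder theorem ($n_{\mathsf{i}}-1$ residues modulo $n_{\mathsf{i}}$ for $\mathsf{i}\in\mathsf{M}$, one for $\mathsf{i}\notin\mathsf{M}$, and a factor $2$ since $l$ runs over two periods of $\p[\vect{n}]$) gives $2\prod_{\mathsf{i}\in\mathsf{M}}(n_{\mathsf{i}}-1)$, and dividing by the fibre size $2^{\#\mathsf{M}}$ yields the second formula; summing over all $\mathsf{M}$ and using $\prod_{\mathsf{i}}(1+n_{\mathsf{i}})=\p[\vect{n}+\vect{1}]$ gives the first. The main obstacle throughout is the non-coprimality of the moduli $2n_{\mathsf{i}}$, which share the factor $2$: this is exactly what makes the mixed-sign exclusion in part a) delicate — one must rule out mixed signs even at half-integer $\tau$, where the individual cosine values do coincide but the congruences modulo $2$ are incompatible — and it is the origin of the factor $2$ in all of the counts.
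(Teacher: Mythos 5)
Your proof is correct and follows essentially the same route as the paper: both reduce the coincidence $\vect{\ell}^{(\vect{n})}_{\vect{0},\vect{u}}(s)=\vect{\ell}^{(\vect{n})}_{\vect{0},\vect{u}}(t)$ to the congruences $\sigma\equiv v_{\mathsf{i}}\tau \bmod 2n_{\mathsf{i}}$, use the Chinese remainder theorem to show that at grid points every sign vector is admissible with fibres of size $2^{\mathsf{d}-\#\mathsf{M}}$, and obtain the cardinalities from $\#\{\,l\,|\,\mathsf{M}(l)=\mathsf{M}\,\}=2\tprod_{\mathsf{i}\in\mathsf{M}}(n_{\mathsf{i}}-1)$ together with the same product identity. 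The only cosmetic difference is that you re-derive part a) (the reflection giving $\#\mathcal{A}^{(\vect{n})}(t)\geq 2$ and the mixed-sign exclusion forcing $\tau\in\mathbb{Z}$) directly in the congruence language, whereas the paper imports these facts from the proof of Theorem \ref{v2:1503211414}.
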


Note that in \eqref{v2:1505031648} the product over $\mathsf{i} \in \emptyset$ is considered as $1$

\medskip

\begin{proof} We use the statements and the notation of the proof of Theorem \ref{v2:1503211414}. 
Then, $\vect{\ell}^{(\vect{n})}_{\vect{0},\vect{u}}(t)=\vect{\mathfrak{l}}^{(\indexvectq)}_{\vect{\alpha},\vect{u}}$ with $q_{\mathsf{i}}=\p_{\mathsf{i}}[\vect{n}]$. Assumption \eqref{v2:1509151442} implies
 $q=\lcm\{q_1,\ldots,q_{\mathsf{d}}\}=\p[\vect{n}]$. We  have shown that 
$\# \mathcal{A}^{(\vect{n})}(t) \geq 2$ for all $t \in (0,2 \pi) \setminus \{\pi\}$. Further, we  have shown that  $\#\mathcal{A}^{(\vect{n})}(t)\geq 3$ yields $t=l\pi/q=l\pi/\p[\vect{n}]=t^{(\vect{n})}_l$ for some  $l\in \{1,\ldots,2q-1\}\setminus\{q\}$. 
We complete the proof by showing statement b), the remaining assertions in a) then follow immediately. 

Let  $l\in \{0,\ldots,2q-1\}$. We use the set $\vect{\mathfrak{A}}(t)$ and the function  $\mathfrak{a}(t;\,\cdot\,):\,\vect{\mathfrak{A}}(t)\to \mathcal{A}^{(\vect{n})}(t)$ introduced in the proof of  Theorem \ref{v2:1503211414} for $t=t^{(\vect{n})}_{l}$.
If $s\in \mathcal{A}^{(\vect{n})}(t^{(\vect{n})}_{l})$, then $s=t^{(\vect{n})}_{l'}$ for some  $l'\in \{0,\ldots,2q-1\}$, and  condition  \eqref{v2:1503171442} for $t=t^{(\vect{n})}_{l}$ is equivalent to 
\[\forall\,\mathsf{i}\in \{1,\ldots,\mathsf{d}\}:\quad   l'\equiv v_{\mathsf{i}} l\mod 2n_{\mathsf{i}}.\]
For arbitrary $\vect{v}\in\{-1,1\}^{\mathsf{d}}$, the numbers $a_{\mathsf{i}}=v_{\mathsf{i}}l$ and $k_{\mathsf{i}}=2n_{\mathsf{i}}$ satisfy condition \eqref{v2:1503211222}. The Chinese remainder theorem implies $\vect{v}\in \vect{\mathfrak{A}}(t^{(\vect{n})}_l)$ and, thus, $\vect{\mathfrak{A}}(t^{(\vect{n})}_l)=\{-1,1\}^{\mathsf{d}}$.

Clearly, the conditions i), ii) in part b) of Theorem \ref{v2:1503151619} are equivalent.  

We have $l\equiv 0\tmod 2n_{\mathsf{j}}$  if and only if 
\[\mathfrak{a}(t^{(\vect{n})}_{l};v_1,\ldots,v_{\mathsf{j}-1},-v_{\mathsf{j}},v_{\mathsf{j}+1},\ldots,v_{\mathsf{d}})=\mathfrak{a}(t^{(\vect{n})}_{l};v_1,\ldots,v_{\mathsf{j}-1},v_{\mathsf{j}},v_{\mathsf{j}+1},\ldots,v_{\mathsf{d}})\]
for all $\vect{v}\in\{-1,1\}^{\mathsf{d}}$. Using this property and $\vect{\mathfrak{A}}(t^{(\vect{n})}_l)=\{-1,1\}^{\mathsf{d}}$, we can conclude that condition i) implies  $\#\mathcal{A}^{(\vect{n})}(t^{(\vect{n})}_{l}) =2^{\#\mathsf{M}}$. We denote by $H_{\mathsf{M}}$ the set of all $l$ in $\{0,\ldots,2q-1\}$ satisfying condition i). Using the identities
\begin{equation}\label{1608141316}
\#H_{\mathsf{M}}=2\tprod_{\mathsf{i}\in \mathsf{M}} (n_{\mathsf{i}}-1),\quad
\tsum_{\mathsf{M}\subseteq \{1,\ldots,\mathsf{d}\}} \dfrac1{2^{\# \mathsf{M}}}\tprod_{\mathsf{i}\in \mathsf{M}}(n_{\mathsf{i}}-1) = \dfrac1{2^{\mathsf{d}}}\tprod_{\mathsf{i}=1}^{\mathsf{d}} (n_{\mathsf{i}}+1),
\end{equation}
finishes the proof of the theorem. Since \eqref{v2:1509151442}, the first identity in \eqref{1608141316} is easily seen using the Chinese remainder theorem. The second identity in \eqref{1608141316} is proven by a straightforward induction argument.
\end{proof}


\section[Polynomial interpolation on \texorpdfstring{${\protect\underbar{\text{LC}}}^{(\protect\underbar{\scriptsize$\boldsymbol{n}$})}$}{LCn}]{Polynomial interpolation on ${\protect\underbar{\text{LC}}}^{\text{\small ( \!\!\protect\underbar{$\boldsymbol{n}$})}}$}\label{v2:15009022044}

\subsection{The node sets} 

We recall the \underline{general assumption \eqref{v2:1509151442} on $\vect{n}\in\mathbb{N}^{\mathsf{d}}$}. In particular, note that
because of this assumption there is at most one $\mathsf{j} \in \{1, \ldots, \mathsf{d} \}$ such that $n_{\mathsf{j}}$ is even. We define
\begin{equation}\label{v2:1509022026}
\begin{split}
\I^{(\vect{n})}&=\I^{(\vect{n})}_0 \cup \I^{(\vect{n})}_1\; \text{with the sets $\I^{(\vect{n})}_{\mathfrak{r}}$, $\mathfrak{r}\in\{0,1\}$, given by}\\
\I^{(\vect{n})}_{\mathfrak{r}}&=\left\{\,\vect{i}\in\mathbb{N}_0^{\mathsf{d}}\,\left|\,\forall\,\mathsf{j}\in\{1,\ldots,\mathsf{d}\}:\ 0\leq i_{\mathsf{j}}\leq n_{\mathsf{j}}\ 
\text{and}\ i_{\mathsf{j}}\equiv \mathfrak{r} \tmod 2\right.\right\}.
\end{split}
\end{equation}
For $\mathsf{M}\subseteq\{1,\ldots,\mathsf{d}\}$, we further introduce the subsets
\begin{equation} \label{v2:1508271623}
\I^{(\vect{n})}_{\mathsf{M}}=\I^{(\vect{n})}_{\mathsf{M},0}\cup \I^{(\vect{n})}_{\mathsf{M},1},\quad 
\I^{(\vect{n})}_{\mathsf{M},\mathfrak{r}}=
\left\{\,\left.\vect{i}\in \I^{(\vect{n})}_{\mathfrak{r}}\,\right|\,0<i_{\mathsf{j}}<n_{\mathsf{j}}\Leftrightarrow\mathsf{j}\in \mathsf{M}\,\right\}.
\end{equation}

From the particular structure of $\I^{(\vect{n})}_{\mathsf{M},\mathfrak{r}}$ as cross products of sets, we immediately obtain the cardinalities
\[\#\I^{(\vect{n})}_{\mathsf{M},\mathfrak{r}}=\dfrac1{2^{\#\mathsf{M}}} \left\{\begin {array}{rl} 
 \tprod_{\mathsf{i}\in \mathsf{M}} \hspace{6pt} (n_{\mathsf{i}}-1) &\text{if all $n_{\mathsf{i}}$ are odd,}\\
  2(1-\mathfrak{r}) \hspace{11pt}  \tprod_{\mathsf{i}\in \mathsf{M}} \hspace{6pt} (n_{\mathsf{i}}-1)  &\text{if $n_{\mathsf{j}}$ is even and $\mathsf{j}\notin\mathsf{M}$,}\\
(n_{\mathsf{j}}-2(1-\mathfrak{r}) ) \hspace{-2pt} \tprod_{\mathsf{i}\in \mathsf{M}\setminus\{\mathsf{j}\}} \hspace{-1pt}(n_{\mathsf{i}}-1)&\text{if $n_{\mathsf{j}}$ is even and $\mathsf{j}\in\mathsf{M}$}. \end{array}\right.
\]
Similarly, for the entire sets we get
\begin{equation}\label{v2:A1505031622}
\#\I^{(\vect{n})}_{\mathfrak{r}}=\dfrac1{2^{\mathsf{d}}}\left\{\begin {array}{rl} \p\,\,\![\vect{n}+\vect{1}] &\text{if all $n_{\mathsf{i}}$ are odd,}\\ 
(n_{\mathsf{j}}+2(1-\mathfrak{r}))\p_{\mathsf{j}}[\vect{n}+\vect{1}] &\text{if $n_{\mathsf{j}}$ is even}. \end{array}\right.
\end{equation}
Therefore, since $\I^{(\vect{n})}_0\cap \I^{(\vect{n})}_1=\emptyset$, we have
\begin{equation}\label{v2:A1505031632}
\#\I^{(\vect{n})} = \dfrac1{2^{\mathsf{d}-1}}\p[\vect{n}+\vect{1}], \quad \#\I^{(\vect{n})}_{\mathsf{M}}=\dfrac1{2^{\#\mathsf{M}-1}}\tprod_{\mathsf{i}\in \mathsf{M}}(n_{\mathsf{i}}-1).
\end{equation}
Using  $\I^{(\vect{n})}_{\mathfrak{r}}$ and the definition \eqref{v2:201510121326}
of the Chebyshev-Gauß-Lobatto points, we define the {\it Lissajous-Chebyshev node sets}
\[\LC^{(\vect{n})} =\LC^{(\vect{n})}_0\cup\LC^{(\vect{n})}_1,\quad \LC^{(\vect{n})}_{\mathfrak{r}}=\left\{\, \vect{z}^{(\vect{n})}_{\vect{i}}\,\left|\,\vect{i}\in \I^{(\vect{n})}_{\mathfrak{r}} \right.\right\}.\]
The following facts are easily seen.
The sets  $\LC^{(\vect{n})}_0$ and  $\LC^{(\vect{n})}_1$ are disjoint. 
The mapping  $\vect{i}\mapsto \vect{z}^{(\vect{n})}_{\vect{i}}$ is a bijection from  $\I^{(\vect{n})}_{\mathfrak{r}}$ onto  $\LC^{(\vect{n})}_{\mathfrak{r}}$,
$\mathfrak{r} \in \{0,1\}$. In particular, it is a bijection from $\I^{(\vect{n})}$ onto $\LC^{(\vect{n})}$.
With the values from \eqref{v2:A1505031622} and \eqref{v2:A1505031632}, we have
\begin{equation}\label{v2:A1505030734}\#\LC^{(\vect{n})} = \#\I^{(\vect{n})},\quad \#\LC^{(\vect{n})}_{\mathfrak{r}}=\#\I^{(\vect{n})}_{\mathfrak{r}}.
\end{equation}
Corresponding to \eqref{v2:1508271623}, we define
\[\LC^{(\vect{n})}_{\mathsf{M}} =\LC^{(\vect{n})}_{\mathsf{M},0}\cup\LC^{(\vect{n})}_{\mathsf{M},1},\quad \LC^{(\vect{n})}_{\mathsf{M},\mathfrak{r}}=\left\{\, \vect{z}^{(\vect{n})}_{\vect{i}}\,\left|\,\vect{i}\in \I^{(\vect{n})}_{\mathsf{M},\mathfrak{r}} \right.\right\}.\]
Obviously, we have
\[\LC^{(\vect{n})}_{\mathsf{M}}=\LC^{(\vect{n})}\cap \vect{F}^{\mathsf{d}}_{\mathsf{M}},\quad 
\LC^{(\vect{n})}_{\mathsf{M},\mathfrak{r}}=\LC^{(\vect{n})}_{\mathfrak{r}}\cap \vect{F}^{\mathsf{d}}_{\mathsf{M}},\]
with the set $\vect{F}^{\mathsf{d}}_{\mathsf{M}}$ given in \eqref{v2:1609011140} and
\begin{equation}\label{v2:1508052354}
\#\LC^{(\vect{n})}_{\mathsf{M}}= \#\I^{(\vect{n})}_{\mathsf{M}},\quad \#\LC^{(\vect{n})}_{\mathsf{M},\mathsf{r}}= \#\I^{(\vect{n})}_{\mathsf{M},\mathsf{r}}.
 \end{equation}
 
 \begin{figure}[htb]
	\centering
	\subfigure[ $\I^{(5,3)}$]{\includegraphics[scale=0.8]{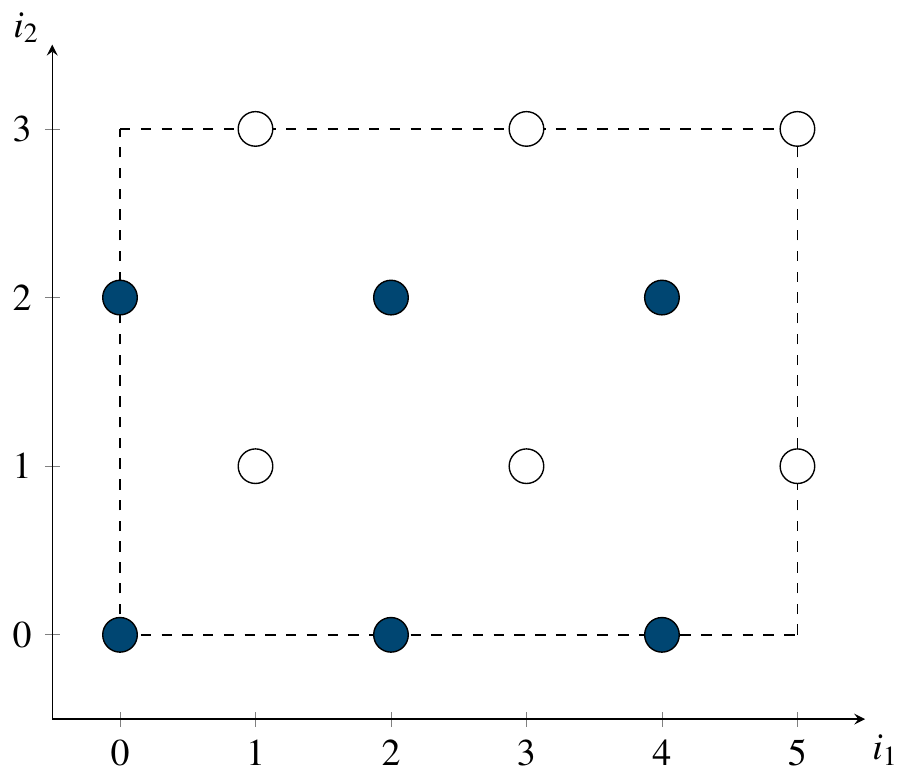}}
	\hfill	
	\subfigure[\hspace*{1em} $\I^{(5,3,2)}$]{\includegraphics[scale=0.8]{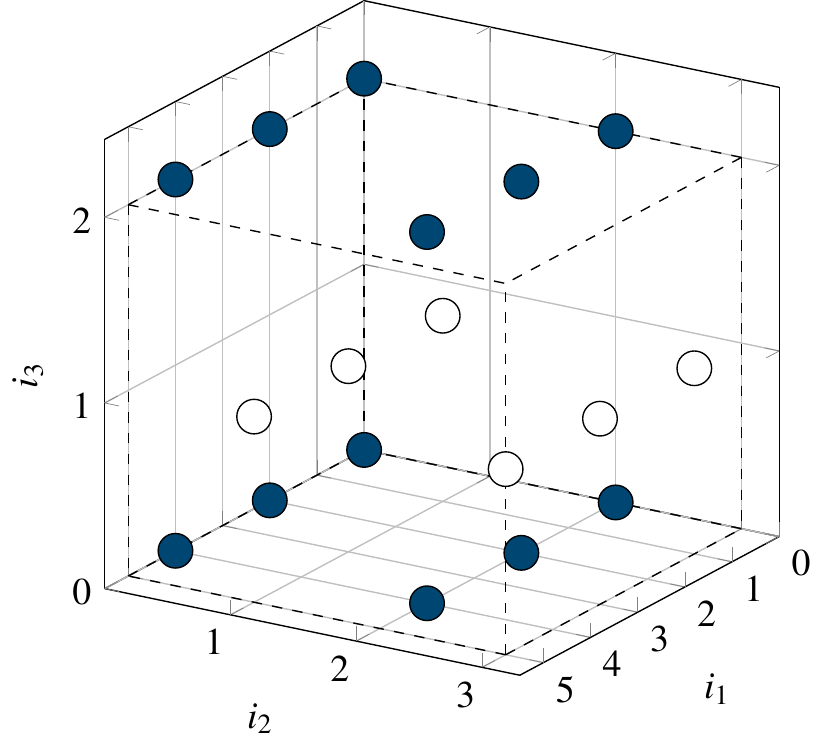}} \\
	\subfigure[\hspace*{1em} $\vect{\ell}^{(5,3)}_{\vect{0},\vect{1}}({[0,\pi]})$ and $\LC^{(5,3)}$]{\includegraphics[scale=0.8]{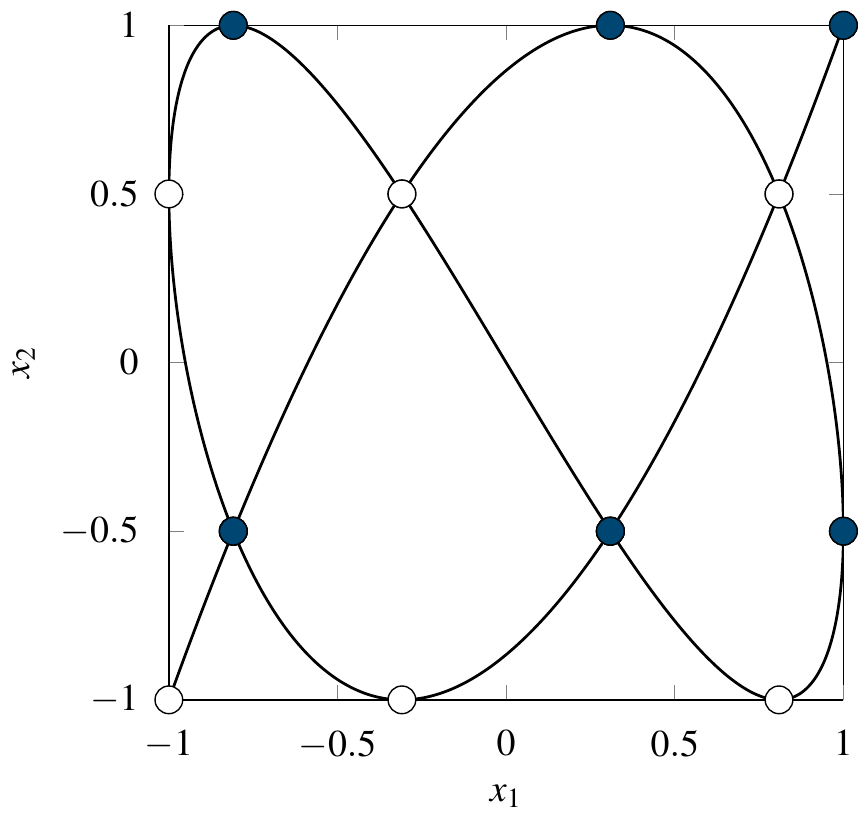}} \hfill
	\subfigure[\hspace*{1em} $\vect{\ell}^{(5,3,2)}_{\vect{0},\vect{1}}({[0,\pi]})$ and $\LC^{(5,3,2)}$]{\includegraphics[scale=0.8]{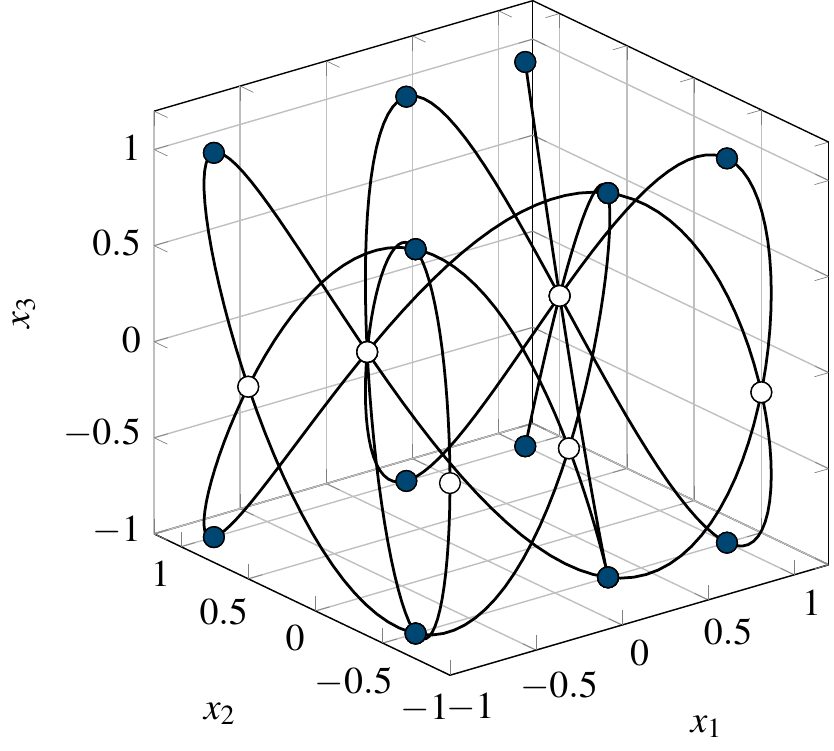}}
  	\caption{Illustration of the sets $\I^{(\vect{n})}$, $\LC^{(\vect{n})}$ and the curves $\vect{\ell}^{(\vect{n})}_{\vect{0},\vect{1}}$ in the dimensions
  	$\mathsf{d} = 2$ and $\mathsf{d} = 3$. The subsets $\I^{(\vect{n})}_0$, $\LC^{(\vect{n})}_0$ and $\I^{(\vect{n})}_1$, $\LC^{(\vect{n})}_1$ are colored in blue and 
  	white, respectively. 
  	}
	\label{v2:fig:lissajous1}
\end{figure}

Examples of the sets $\I^{(\vect{n})}$ and $\LC^{(\vect{n})}$ are illustrated in Figure \ref{v2:fig:lissajous1}. 
In Theorems \ref{v2:1509092212} and \ref{v2:1508251450}, we will see how the set $\LC^{(\vect{n})}$ is linked to the degenerate Lissajous curve $\vect{\ell}^{(\vect{n})}_{\vect{0},\vect{1}}$. In  the proof of these theorems, we use the next Proposition \ref{v2:1509021542} in which
we identify elements of $\I^{(\vect{n})}$ with the elements of a special class decomposition of the set 
\[ H^{(\vect{n})}=\{0,\ldots,2\p[\vect{n}]-1\}. \]
Proposition \ref{v2:1509021542} will play an essential role in the proof of Proposition \ref{v2:1507211320}.

\begin{proposition}\label{v2:1509021542}
a) For all $l \in \mathbb{Z}$, there exist a uniquely determined $\vect{i}\in \I^{(\vect{n})}$ and a (not necessarily  uniquely determined) $\vect{v}\in\{-1,1\}^{\mathsf{d}}$ such that
\begin{equation} \label{v2:1509021546}
\forall\,\mathsf{i}\in \{1,\ldots,\mathsf{d}\}:\quad i_{\mathsf{i}}\equiv  v_{\mathsf{i}}l\mod 2n_{\mathsf{i}}.
\end{equation}
Therefore, a function $\vect{i}^{(\vect{n})}:\,H^{(\vect{n})}\to\I^{(\vect{n})}$ is well defined by
\[\vect{i}^{(\vect{n})}(l)=\vect{i}.\]

b) For $l\in H^{(\vect{n})}$, we have $\vect{i}^{(\vect{n})}(l)\in \I^{(\vect{n})}_{\mathfrak{r}}$ if and only if $l\equiv\mathfrak{r}\tmod 2$.

c) Let $\mathsf{M}\subseteq \{1,\ldots,\mathsf{d}\}$. If  $\vect{i}\in \I^{(\vect{n})}_{\mathsf{M}}$, then $\#\{\,l\in H^{(\vect{n})}\,|\,\vect{i}^{(\vect{n})}(l)=\vect{i}\,\}=2^{\#\mathsf{M}}$.
\end{proposition}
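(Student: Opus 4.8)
The plan is to prove all three parts by reducing the congruence \eqref{v2:1509021546} to a coordinatewise folding, supplemented in part c) by a single application of the Chinese remainder theorem recalled in the introduction. For part a), I fix $l\in\mathbb{Z}$ and treat each coordinate separately: let $r_{\mathsf{i}}\in\{0,\ldots,2n_{\mathsf{i}}-1\}$ be the residue of $l$ modulo $2n_{\mathsf{i}}$, and fold it into $\{0,\ldots,n_{\mathsf{i}}\}$ by setting $i_{\mathsf{i}}=r_{\mathsf{i}}$, $v_{\mathsf{i}}=1$ when $r_{\mathsf{i}}\leq n_{\mathsf{i}}$, and $i_{\mathsf{i}}=2n_{\mathsf{i}}-r_{\mathsf{i}}$, $v_{\mathsf{i}}=-1$ otherwise. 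Then $0\leq i_{\mathsf{i}}\leq n_{\mathsf{i}}$ and $i_{\mathsf{i}}\equiv v_{\mathsf{i}}l\tmod 2n_{\mathsf{i}}$ by construction, so \eqref{v2:1509021546} holds for this $\vect{i}$ and $\vect{v}$. Since $2n_{\mathsf{i}}-r_{\mathsf{i}}\equiv r_{\mathsf{i}}\tmod 2$, in either case $i_{\mathsf{i}}\equiv r_{\mathsf{i}}\equiv l\tmod 2$, so all components of $\vect{i}$ share the parity of $l$ and $\vect{i}\in\I^{(\vect{n})}_{\mathfrak{r}}$ with $\mathfrak{r}\equiv l\tmod 2$. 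This establishes the existence claim in a) and the assertion b) at once.

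For the uniqueness in a), I would take two admissible pairs $(\vect{i},\vect{v})$ and $(\vect{i}',\vect{v}')$ and compare them coordinatewise. If $v_{\mathsf{i}}=v'_{\mathsf{i}}$ then $i_{\mathsf{i}}\equiv i'_{\mathsf{i}}\tmod 2n_{\mathsf{i}}$, and since both lie in $\{0,\ldots,n_{\mathsf{i}}\}$ they coincide. If $v_{\mathsf{i}}=-v'_{\mathsf{i}}$ then $i_{\mathsf{i}}+i'_{\mathsf{i}}\equiv 0\tmod 2n_{\mathsf{i}}$ with $0\leq i_{\mathsf{i}}+i'_{\mathsf{i}}\leq 2n_{\mathsf{i}}$, forcing $i_{\mathsf{i}}+i'_{\mathsf{i}}\in\{0,2n_{\mathsf{i}}\}$ and hence $i_{\mathsf{i}}=i'_{\mathsf{i}}\in\{0,n_{\mathsf{i}}\}$. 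Either way $\vect{i}=\vect{i}'$, so the map $\vect{i}^{(\vect{n})}$ is well defined.

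For part c), I first rewrite the defining condition: $\vect{i}^{(\vect{n})}(l)=\vect{i}$ holds precisely when there is some $\vect{v}\in\{-1,1\}^{\mathsf{d}}$ with $l\equiv v_{\mathsf{i}}i_{\mathsf{i}}\tmod 2n_{\mathsf{i}}$ for all $\mathsf{i}$. By \eqref{v2:1509151442} the moduli $2n_{\mathsf{i}}$ have pairwise greatest common divisor $2$ and least common multiple $2\p[\vect{n}]$, so the uniqueness range of the Chinese remainder theorem is exactly $H^{(\vect{n})}$. As the targets $v_{\mathsf{i}}i_{\mathsf{i}}$ all carry the common parity $\mathfrak{r}$ of the components of $\vect{i}$, the compatibility condition \eqref{v2:1503211222} is satisfied for every $\vect{v}$, so each sign vector yields exactly one $l\in H^{(\vect{n})}$, and distinct residue tuples $(v_{\mathsf{i}}i_{\mathsf{i}}\bmod 2n_{\mathsf{i}})_{\mathsf{i}}$ give distinct $l$. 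It then remains to count the distinct residue tuples produced as $\vect{v}$ ranges over $\{-1,1\}^{\mathsf{d}}$: for $\mathsf{i}\notin\mathsf{M}$ we have $i_{\mathsf{i}}\in\{0,n_{\mathsf{i}}\}$ and both signs give the same residue modulo $2n_{\mathsf{i}}$, whereas for $\mathsf{i}\in\mathsf{M}$ we have $0<i_{\mathsf{i}}<n_{\mathsf{i}}$ and $i_{\mathsf{i}}$, $2n_{\mathsf{i}}-i_{\mathsf{i}}$ are two distinct residues. Hence there are $2^{\#\mathsf{M}}$ distinct tuples, which gives the claim.

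The coordinatewise arguments in a) and b) are entirely routine; the step requiring genuine care is the bookkeeping in c). The main point I would make precise is that the sign $v_{\mathsf{i}}$ is invisible modulo $2n_{\mathsf{i}}$ exactly when $i_{\mathsf{i}}\in\{0,n_{\mathsf{i}}\}$, i.e.\ when $\mathsf{i}\notin\mathsf{M}$, and otherwise genuinely alters the residue. Combined with the injectivity of the Chinese remainder correspondence, this shows that the collisions among the $2^{\mathsf{d}}$ sign vectors occur precisely along the coordinates outside $\mathsf{M}$, so the final count collapses from $2^{\mathsf{d}}$ to $2^{\#\mathsf{M}}$.
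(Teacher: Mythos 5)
Your proof is correct and follows essentially the same route as the paper's: coordinatewise folding of $l$ modulo $2n_{\mathsf{i}}$ into $\{0,\ldots,n_{\mathsf{i}}\}$ for existence and uniqueness in a) and b), and the Chinese remainder theorem for the count in c). The only cosmetic difference is that in c) you count the distinct residue tuples $(v_{\mathsf{i}}i_{\mathsf{i}}\bmod 2n_{\mathsf{i}})_{\mathsf{i}}$ directly, whereas the paper counts the fibers of the map $\vect{v}\mapsto l$; these are the same argument.
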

\begin{proof} 
For $l\in\mathbb{Z}$, we can find a tupel $\vect{i}$ with $0\leq i_{\mathsf{i}}\leq n_{\mathsf{i}}$ and $\vect{v} \in \{-1,1\}^{\mathsf{d}}$ satisfying $v_{\mathsf{i}} i_{\mathsf{i}}\equiv l\tmod 2n_{\mathsf{i}}$. 
Furthermore, we have $i_{\mathsf{i}}\equiv l\equiv  i_{\mathsf{j}} \tmod 2$ for all $\mathsf{i}$, $\mathsf{j}$. 
Therefore, $\vect{i}\in \I^{(\vect{n})}$. The restriction $0\leq i_{\mathsf{i}}\leq n_{\mathsf{i}}$, $\mathsf{i}\in \{1,\ldots,\mathsf{d}\}$, implies the uniqueness of
$\vect{i}$. From \eqref{v2:1509021546} and the definitions in \eqref{v2:1509022026} we directly get statement b).

Let $\vect{i}\in \I^{(\vect{n})}$ and $\vect{v}\in\{-1,1\}^{\mathsf{d}}$. 
For $a_{\mathsf{i}}=v_{\mathsf{i}}i_{\mathsf{i}}$ and $k_{\mathsf{i}}=2n_{\mathsf{i}}$, condition \eqref{v2:1503211222} is valid
by the definition of $\I^{(\vect{n})}$ in \eqref{v2:1509022026}. The Chinese remainder theorem implies the existence of a unique  $l\in H^{(\vect{n})}$ satisfying \eqref{v2:1509021546}. Therefore, for each  $\vect{i}\in \I^{(\vect{n})}$ a function $l^{(\vect{n})}_{\vect{i}}:\,\{-1,1\}^{\mathsf{d}}\to H^{(\vect{n})}$ is well defined by $l^{(\vect{n})}_{\vect{i}}(\vect{v})=l$ with   \eqref{v2:1509021546}.   

 We have  $\vect{i}^{(\vect{n})}(l)=\vect{i}$ if and only if there is a $\vect{v}\in\{-1,1\}^{\mathsf{d}}$ such that $l=l^{(\vect{n})}_{\vect{i}}(\vect{v})$. 
  Let $\vect{i}\in \I^{(\vect{n})}_{\mathsf{M}}$. Then  $l^{(\vect{n})}_{\vect{i}}(\vect{v}')=l^{(\vect{n})}_{\vect{i}}(\vect{v})$ if and only if 
\[
\forall\,\mathsf{i}\in \mathsf{M}:\quad (v'_{\mathsf{i}}-v_{\mathsf{i}})i_{\mathsf{i}}\equiv 0\mod 2n_{\mathsf{i}},
\]
i.e. if and only if   $v'_{\mathsf{i}}=v_{\mathsf{i}}$ for all $\mathsf{i}\in\mathsf{M}$.
\end{proof}
 
\vspace{-0.7em}

\begin{theorem}\label{v2:1509092212}
For $\mathfrak{r}\in\{0,1\}$, we have
\begin{equation}\label{v2:1508041447}
\LC^{(\vect{n})}_{\mathfrak{r}}=\left\{\,\vect{\ell}^{(\vect{n})}_{\vect{0},\vect{1}}(t^{(\vect{n})}_{l})\,\left|\,l\in \{0,\ldots,\p[\vect{n}]\}\
\text{with}\ l\equiv\mathfrak{r}\tmod 2\right.\,\right\}.
\end{equation}

\vspace{-0.4em}

\noindent In particular, using the notation \eqref{v2:1509171731} with $\vect{u}=\vect{1}$, we 
can write $\LC^{(\vect{n})}=\LD^{(\vect{n})}_{\vect{1}}$.
\end{theorem}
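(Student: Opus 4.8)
The plan is to reduce the whole statement to Proposition~\ref{v2:1509021542} by a coordinatewise computation. First I would evaluate the standard degenerate curve on the grid: since $\p_{\mathsf{i}}[\vect{n}]/\p[\vect{n}]=1/n_{\mathsf{i}}$, the $\mathsf{i}$-th component of $\vect{\ell}^{(\vect{n})}_{\vect{0},\vect{1}}(t^{(\vect{n})}_{l})$ equals $\cos\!\big(\p_{\mathsf{i}}[\vect{n}]\cdot l\pi/\p[\vect{n}]\big)=\cos(l\pi/n_{\mathsf{i}})$. Comparing with $z^{(n_{\mathsf{i}})}_{i_{\mathsf{i}}}=\cos(i_{\mathsf{i}}\pi/n_{\mathsf{i}})$ and using that $\cos(a\pi/n)=\cos(b\pi/n)$ precisely when $a\equiv\pm b\pmod{2n}$, I obtain that $\vect{\ell}^{(\vect{n})}_{\vect{0},\vect{1}}(t^{(\vect{n})}_{l})=\vect{z}^{(\vect{n})}_{\vect{i}}$ holds if and only if for each $\mathsf{i}$ there is $v_{\mathsf{i}}\in\{-1,1\}$ with $i_{\mathsf{i}}\equiv v_{\mathsf{i}}l\pmod{2n_{\mathsf{i}}}$ --- which is exactly condition~\eqref{v2:1509021546}. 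The uniqueness part of Proposition~\ref{v2:1509021542}a then pins down $\vect{i}=\vect{i}^{(\vect{n})}(l)$, so that $\vect{\ell}^{(\vect{n})}_{\vect{0},\vect{1}}(t^{(\vect{n})}_{l})=\vect{z}^{(\vect{n})}_{\vect{i}^{(\vect{n})}(l)}$ for every $l\in\mathbb{Z}$.

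Granting this identity, one inclusion in~\eqref{v2:1508041447} is immediate: for $l\in\{0,\ldots,\p[\vect{n}]\}$ with $l\equiv\mathfrak{r}\pmod 2$, Proposition~\ref{v2:1509021542}b yields $\vect{i}^{(\vect{n})}(l)\in\I^{(\vect{n})}_{\mathfrak{r}}$, hence $\vect{\ell}^{(\vect{n})}_{\vect{0},\vect{1}}(t^{(\vect{n})}_{l})\in\LC^{(\vect{n})}_{\mathfrak{r}}$.

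For the reverse inclusion I would fix $\vect{i}\in\I^{(\vect{n})}_{\mathfrak{r}}$ and first produce some $l\in H^{(\vect{n})}$ with $\vect{i}^{(\vect{n})}(l)=\vect{i}$; such an $l$ exists by the Chinese remainder theorem (solving $l\equiv i_{\mathsf{i}}\pmod{2n_{\mathsf{i}}}$, i.e.\ \eqref{v2:1509021546} with $\vect{v}=\vect{1}$), exactly as in the proof of Proposition~\ref{v2:1509021542}c, and by Proposition~\ref{v2:1509021542}b it automatically satisfies $l\equiv\mathfrak{r}\pmod 2$. The remaining point is the range: the theorem restricts $l$ to $\{0,\ldots,\p[\vect{n}]\}$, while the Chinese remainder theorem only places $l$ in $H^{(\vect{n})}=\{0,\ldots,2\p[\vect{n}]-1\}$. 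Here I would invoke the reflection symmetry of the standard degenerate curve, $\vect{\ell}^{(\vect{n})}_{\vect{0},\vect{1}}(t)=\vect{\ell}^{(\vect{n})}_{\vect{0},\vect{1}}(2\pi-t)$ (this is the relation $\vect{\mathfrak{l}}(t)=\vect{\mathfrak{l}}(2\pi-t)$ for $\vect{\alpha}=\vect{0}$ used in the proof of Theorem~\ref{v2:1503211414}, and it also follows at once from the evenness and $2\pi$-periodicity of the cosine); in grid form it reads $\vect{\ell}^{(\vect{n})}_{\vect{0},\vect{1}}(t^{(\vect{n})}_{l})=\vect{\ell}^{(\vect{n})}_{\vect{0},\vect{1}}(t^{(\vect{n})}_{2\p[\vect{n}]-l})$. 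Thus whenever $l>\p[\vect{n}]$ I replace it by $l'=2\p[\vect{n}]-l\in\{1,\ldots,\p[\vect{n}]-1\}$; since $2\p[\vect{n}]$ is even the parity is preserved, and the identity of the first paragraph forces $\vect{i}^{(\vect{n})}(l')=\vect{i}$. In either case I end up with an admissible $l\in\{0,\ldots,\p[\vect{n}]\}$, $l\equiv\mathfrak{r}\pmod 2$, mapping to $\vect{z}^{(\vect{n})}_{\vect{i}}$, which establishes~\eqref{v2:1508041447}.

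Finally, taking the union over $\mathfrak{r}\in\{0,1\}$ removes the parity constraint and gives $\LC^{(\vect{n})}=\{\vect{\ell}^{(\vect{n})}_{\vect{0},\vect{1}}(t^{(\vect{n})}_{l})\mid l\in\{0,\ldots,\p[\vect{n}]\}\}=\LD^{(\vect{n})}_{\vect{1}}$ by the definition~\eqref{v2:1509171731}. I expect the only genuinely delicate step to be this range reduction via the reflection symmetry, together with the bookkeeping that parity is preserved under $l\mapsto 2\p[\vect{n}]-l$; the coordinatewise identification and the parity statement are essentially immediate consequences of Proposition~\ref{v2:1509021542}.
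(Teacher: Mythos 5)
Your proposal is correct and follows essentially the same route as the paper: the paper's proof likewise uses the degeneracy (reflection symmetry) of $\vect{\ell}^{(\vect{n})}_{\vect{0},\vect{1}}$ to pass between the index ranges $\{0,\ldots,\p[\vect{n}]\}$ and $H^{(\vect{n})}$, identifies $\vect{\ell}^{(\vect{n})}_{\vect{0},\vect{1}}(t^{(\vect{n})}_{l})=\vect{z}^{(\vect{n})}_{\vect{i}^{(\vect{n})}(l)}$, and then invokes Proposition \ref{v2:1509021542}. You have merely written out in full the coordinatewise verification and the parity bookkeeping that the paper leaves implicit.
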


In the framework of Lissajous curves, the node sets $\LD^{(\vect{n})}_{\vect{1}}$ of degenerate 
Lissajous curves were already considered in Theorem \ref{v2:1503151619}. 
The points in $\LC^{(\vect{n})}_{\mathsf{M}}$, $\#\mathsf{M}\geq 2$, are thus precisely the self-intersection points
of the degenerate curve $\vect{\ell}^{(\vect{n})}_{\vect{0},\vect{1}}$ in $\vect{F}^{\mathsf{d}}_{\mathsf{M}}$ which are traversed $2^{\#\mathsf{M}-1}$ times as $t$  varies  from $0$ to $\pi$.
The statements in \eqref{v2:1505031648} correspond to the respective first statements in \eqref{v2:A1505030734} and in \eqref{v2:1508052354}.

\medskip

\begin{proof} Since $\vect{\ell}^{(\vect{n})}_{\vect{0},\vect{1}}$ is degenerate, the set on the right hand side of \eqref{v2:1508041447} can be reformulated
using  all $l\in H^{(\vect{n})}$ with $l\equiv\mathfrak{r}\tmod 2$. If $\vect{i}^{(\vect{n})}(l)=\vect{i}$, we get $\vect{\ell}^{(\vect{n})}_{\vect{0},\vect{1}}(t^{(\vect{n})}_l)=\vect{z}^{(\vect{n})}_{\vect{i}}$. Thus, Proposition \ref{v2:1509021542} yields the assertion. 
\end{proof}

\noindent For $\gamma\in\mathbb{N}_0$, the univariate Chebyshev polynomials $T_{\gamma}$ of the first kind are given by
\begin{equation} \label{v2:1508251445}
T_{\gamma}(x) = \cos (\gamma \arccos x),\quad x\in [-1,1].\end{equation}
We define the affine real algebraic Chebyshev variety $\mathcal{C}^{(\vect{n})}$ in $[-1,1]^{\mathsf{d}}$ by 
\begin{equation}\label{v2:1508251451} 
\mathcal{C}^{(\vect{n})} = \left\{\,\vect{x} \in [-1,1]^{\mathsf{d}}\,\left|\,T_{n_1}(x_1)  = \ldots = T_{n_{\mathsf{d}}}(x_{\mathsf{d}})\right. \right\}. 
\end{equation} 

This algebraic variety corresponds to the degenerate Lissajous curve $\vect{\ell}^{(\vect{n})}_{\vect{0},\vect{1}}$, 
the singular points of $\mathcal{C}^{(\vect{n})}$ to the self-intersection points of the curve $\vect{\ell}^{(\vect{n})}_{\vect{0},\vect{1}}$.

\begin{theorem} \label{v2:1508251450}
We have $\mathcal{C}^{(\vect{n})}=\vect{\ell}^{(\vect{n})}_{\vect{0},\vect{1}}([0,2\pi))=\vect{\ell}^{(\vect{n})}_{\vect{0},\vect{1}}([0,\pi])$.

A point $\vect{x}\in \mathcal{C}^{(\vect{n})}$ is a  singular point of $\mathcal{C}^{(\vect{n})}$ if and only if it 
is an element of $\LC^{(\vect{n})}_{\mathsf{M}}$ for some $\mathsf{M}\subseteq \{1,\ldots,\mathsf{d}\}$ with $\#\mathsf{M}\geq 2$. Furthermore, for $\mathfrak{r}\in \{0,1\}$ we have
\begin{equation}  \label{v2:1508251536} 
\LC^{(\vect{n})}_{\mathsf{M},\mathfrak{r}} = \left\{\,  \vect{x} \in \vect{F}^{\mathsf{d}}_{\mathsf{M}}\,\left|\,T_{n_1}(x_1) = \ldots = T_{n_{\mathsf{d}}}(x_{\mathsf{d}}) = (-1)^{\mathfrak{r}} \right.\right\}.
\end{equation}
\end{theorem}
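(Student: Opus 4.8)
The plan is to prove the three assertions of Theorem \ref{v2:1508251450} in sequence, exploiting the close relationship between the Chebyshev polynomials $T_{n_{\mathsf{i}}}$ and the coordinate functions of the degenerate Lissajous curve $\vect{\ell}^{(\vect{n})}_{\vect{0},\vect{1}}$. The crucial observation tying everything together is the identity $T_{n_{\mathsf{i}}}\!\left(\cos(\p_{\mathsf{i}}[\vect{n}]\cdot t)\right)=\cos(n_{\mathsf{i}}\,\p_{\mathsf{i}}[\vect{n}]\cdot t)=\cos(\p[\vect{n}]\cdot t)$, which follows from the defining relation \eqref{v2:1508251445} and the fact that $n_{\mathsf{i}}\,\p_{\mathsf{i}}[\vect{n}]=\p[\vect{n}]$ for every $\mathsf{i}$. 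The right-hand side is independent of $\mathsf{i}$, so every point $\vect{\ell}^{(\vect{n})}_{\vect{0},\vect{1}}(t)$ automatically satisfies the defining equations $T_{n_1}(x_1)=\cdots=T_{n_{\mathsf{d}}}(x_{\mathsf{d}})$ of $\mathcal{C}^{(\vect{n})}$; this immediately gives one inclusion for the first assertion.

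For the set equality $\mathcal{C}^{(\vect{n})}=\vect{\ell}^{(\vect{n})}_{\vect{0},\vect{1}}([0,\pi])$, I would argue the reverse inclusion by starting from an arbitrary $\vect{x}\in\mathcal{C}^{(\vect{n})}$ and setting $c=T_{n_1}(x_1)=\cdots=T_{n_{\mathsf{d}}}(x_{\mathsf{d}})\in[-1,1]$. Choosing $t_0\in[0,\pi]$ with $\cos(\p[\vect{n}]\cdot t_0)=c$, I need to produce a parameter $t$ whose curve value is exactly $\vect{x}$; the idea is to solve the simultaneous conditions $\cos(\p_{\mathsf{i}}[\vect{n}]\cdot t)=x_{\mathsf{i}}$ for all $\mathsf{i}$. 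Since $x_{\mathsf{i}}=\cos(\theta_{\mathsf{i}})$ with $n_{\mathsf{i}}\theta_{\mathsf{i}}\equiv\pm\p[\vect{n}]\cdot t_0\pmod{2\pi}$, this reduces to a system of congruences that is solvable precisely because the $n_{\mathsf{i}}$ are pairwise relatively prime; the Chinese remainder theorem \eqref{v2:1503211222}--\eqref{v2:1505032140} supplies the needed $t$. The restriction from $[0,2\pi)$ down to $[0,\pi]$ then follows from the degeneracy relation $\vect{\ell}^{(\vect{n})}_{\vect{0},\vect{1}}(t)=\vect{\ell}^{(\vect{n})}_{\vect{0},\vect{1}}(2\pi-t)$ noted after Theorem \ref{v2:1503211414}.

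The characterization \eqref{v2:1508251536} is where I would expect to do the bulk of the bookkeeping. By Theorem \ref{v2:1509092212} the set $\LC^{(\vect{n})}_{\mathfrak{r}}$ consists of the points $\vect{\ell}^{(\vect{n})}_{\vect{0},\vect{1}}(t^{(\vect{n})}_l)$ with $l\equiv\mathfrak{r}\pmod 2$, and for such $l$ the common value of the $T_{n_{\mathsf{i}}}(x_{\mathsf{i}})$ equals $\cos(\p[\vect{n}]\cdot t^{(\vect{n})}_l)=\cos(l\pi)=(-1)^{\mathfrak{r}}$, which pins down the sign condition on the right-hand side of \eqref{v2:1508251536}. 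Intersecting with $\vect{F}^{\mathsf{d}}_{\mathsf{M}}$ selects exactly those indices whose membership $0<i_{\mathsf{j}}<n_{\mathsf{j}}$ corresponds to $\mathsf{j}\in\mathsf{M}$, matching the definition of $\I^{(\vect{n})}_{\mathsf{M},\mathfrak{r}}$ in \eqref{v2:1508271623}; the equivalence of conditions i) and ii) established in Theorem \ref{v2:1503151619}b) furnishes exactly this translation between coordinate location and index residues. For the reverse inclusion I would take any $\vect{x}\in\vect{F}^{\mathsf{d}}_{\mathsf{M}}$ satisfying the sign condition, apply the first assertion to write $\vect{x}=\vect{\ell}^{(\vect{n})}_{\vect{0},\vect{1}}(t^{(\vect{n})}_l)$, and read off that the parity of $l$ must be $\mathfrak{r}$ from $(-1)^{\mathfrak{r}}=\cos(l\pi)$.

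Finally, the singular-point statement would be addressed via multiplicity of preimages: a point of $\mathcal{C}^{(\vect{n})}$ is singular exactly when the curve passes through it more than once in a single traversal, i.e. when $\#\mathcal{A}^{(\vect{n})}(t)>2$ for the corresponding parameter, which by Theorem \ref{v2:1503151619} happens precisely on $\vect{F}^{\mathsf{d}}_{\mathsf{M}}$ with $\#\mathsf{M}\geq 2$. The main obstacle I anticipate is not any single hard inequality but rather the careful verification that the Chinese-remainder construction in the reverse inclusion of the first assertion genuinely yields a parameter producing the prescribed coordinates $x_{\mathsf{i}}$ \emph{simultaneously}, including correct handling of the sign ambiguities $\theta_{\mathsf{i}}\mapsto-\theta_{\mathsf{i}}$; this is exactly the place where the pairwise coprimality assumption \eqref{v2:1509151442} is indispensable, and where the algebraic-geometric notion of "singular point" must be reconciled with the combinatorial preimage count. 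I would handle this by reducing everything to the index function $\vect{i}^{(\vect{n})}$ of Proposition \ref{v2:1509021542}, so that the geometric claims become statements about residues modulo $2n_{\mathsf{i}}$ that were already settled there.
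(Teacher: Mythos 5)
Your treatment of the set equality $\mathcal{C}^{(\vect{n})}=\vect{\ell}^{(\vect{n})}_{\vect{0},\vect{1}}([0,2\pi))=\vect{\ell}^{(\vect{n})}_{\vect{0},\vect{1}}([0,\pi])$ matches the paper's argument: the easy inclusion via $T_{n_{\mathsf{i}}}(\cos(\p_{\mathsf{i}}[\vect{n}]\cdot t))=\cos(\p[\vect{n}]\cdot t)$, and the reverse inclusion by lifting $x_{\mathsf{i}}=\cos(\theta_{\mathsf{i}})$ to a system of congruences solved with the Chinese remainder theorem. Your derivation of \eqref{v2:1508251536} through Theorem \ref{v2:1509092212} and the parity of $l$ is a slightly more roundabout but valid alternative to the paper's direct computation $T_{n_{\mathsf{j}}}(z^{(n_{\mathsf{j}})}_{i_{\mathsf{j}}})=\cos(i_{\mathsf{j}}\pi)=(-1)^{\mathfrak{r}}$ combined with the observation that $T_{n}(x)=\pm1$ forces $x=z^{(n)}_{k}$ for some $k$.

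The singular-point assertion, however, has a genuine gap. You propose to declare a point singular exactly when it is traversed more than once, i.e.\ when $\#\mathcal{A}^{(\vect{n})}(t)>2$, and to import the count from Theorem \ref{v2:1503151619}. But ``singular point of $\mathcal{C}^{(\vect{n})}$'' is defined through the defining polynomials of the affine variety, not through any particular parametrization: one must exhibit $\mathcal{C}^{(\vect{n})}$ as the zero set of $f_{\mathsf{i}}(\vect{x})=T_{n_{\mathsf{i}}}(x_{\mathsf{i}})-T_{n_{\mathsf{i}+1}}(x_{\mathsf{i}+1})$, $\mathsf{i}=1,\ldots,\mathsf{d}-1$, and determine where the Jacobian of $(f_1,\ldots,f_{\mathsf{d}-1})$ fails to have rank $\mathsf{d}-1$. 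The identification of Jacobian rank deficiency with multiplicity of preimages under $\vect{\ell}^{(\vect{n})}_{\vect{0},\vect{1}}$ is precisely the content of this part of the theorem, not something that can be assumed; in general a multiply-traversed point of a parametrized set need not be a singular point of the image variety (and vice versa) without further argument. You acknowledge that the two notions ``must be reconciled'' but never say how. The paper closes this gap by a direct computation: the Jacobian is bidiagonal with entries $T_{n_{\mathsf{i}}}'(x_{\mathsf{i}}^*)$ and $-T_{n_{\mathsf{i}+1}}'(x_{\mathsf{i}+1}^*)$, its rank drops below $\mathsf{d}-1$ exactly when $T_{n_{\mathsf{i}}}'(x_{\mathsf{i}}^*)=0$ for at least two indices, and $T_{n_{\mathsf{i}}}'(x_{\mathsf{i}}^*)=0$ on $[-1,1]$ holds precisely for $x_{\mathsf{i}}^*=z^{(n_{\mathsf{i}})}_{i_{\mathsf{i}}}$ with $0<i_{\mathsf{i}}<n_{\mathsf{i}}$, which combined with $\vect{x}^*\in\mathcal{C}^{(\vect{n})}$ places $\vect{x}^*$ in $\LC^{(\vect{n})}_{\mathsf{M}}$ with $\#\mathsf{M}\geq 2$. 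Without this (or an equivalent rank analysis), the ``if and only if'' for singular points is not established.
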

The formula $\eqref{v2:1508251536}$ is satisfied for all $\mathsf{M}\subseteq \{1,\ldots,\mathsf{d}\}$. 
In the case $\#\mathsf{M}\in \{0,1\}$, the elements of $\LC^{(\vect{n})}_{\mathsf{M},\mathfrak{r}}$ are regular points of $\mathcal{C}^{(\vect{n})}$ on the corners and edges of $[-1,1]^{\mathsf{d}}$.

\medskip

\begin{proof}
Let $\vect{x}\in \mathcal{C}^{(\vect{n})}$. We choose $\theta_{\mathsf{i}}\in [0,\pi]$ and $t'\in [0,\pi]$ such that 
$x_{\mathsf{i}}=\cos(\theta_{\mathsf{i}})$ and $T_{n_{\mathsf{i}}}(x_{\mathsf{i}})=\cos(\p[\vect{n}]t')$ for all $\mathsf{i}$. 
Then, there exist $\vect{v}\in \{-1,1\}^{\mathsf{d}}$ and $\vect{h}\in\mathbb{Z}^{\mathsf{d}}$ such that $n_{\mathsf{i}}\theta_{\mathsf{i}}= v_{\mathsf{i}}\p[\vect{n}]t'+2\pi h_{\mathsf{i}}$ for 
all $\mathsf{i}$. Thus, $x_{\mathsf{i}}=\cos(\p_{\mathsf{i}}[\vect{n}]t'+2\pi v_{\mathsf{i}}h_{\mathsf{i}}/n_{\mathsf{i}})$ for all $\mathsf{i}$. 
By the Chinese remainder theorem, we can find an $l$ such that $l\equiv v_{\mathsf{i}}h_{\mathsf{i}}\tmod n_{\mathsf{i}}$. For  $t=t'+2l\pi/\p[\vect{n}]$,
we obtain $x_{\mathsf{i}}=\cos(\p_{\mathsf{i}}[\vect{n}]t)$  for all $\mathsf{i}$.
Therefore, $\vect{x}\in \vect{\ell}^{(\vect{n})}_{\vect{0},\vect{1}}(\mathbb{R})=\vect{\ell}^{(\vect{n})}_{\vect{0},\vect{1}}([0,2\pi)) = \vect{\ell}^{(\vect{n})}_{\vect{0},\vect{1}}([0,\pi])$. 
The relation $\vect{\ell}^{(\vect{n})}_{\vect{0},\vect{1}}([0,2\pi))\subseteq \mathcal{C}^{(\vect{n})}$ can be seen immediately by inserting $\vect{\ell}^{(\vect{n})}_{\vect{0},\vect{1}}(t)$ 
in the definition \eqref{v2:1508251451} of the Chebyshev variety.

The following part is similar as in \cite[Section 3.9]{Fischer} and \cite{KoseleffPecker2011} for dimension two. 
For the considered variety, we have $\mathcal{C}^{(\vect{n})} = \left\{\,\vect{x} \in [-1,1]^{\mathsf{d}}\,\left|\,(f_1,\ldots,f_{\mathsf{d}-1})(\vect{x})=\vect{0}\right.\right\}$ with 
$f_{\mathsf{i}}(\vect{x})=T_{n_{\mathsf{i}}}(x_{\mathsf{i}})-T_{n_{\mathsf{i}+1}}(x_{\mathsf{i}+1})$. 
The singular points of this variety are by definition the points $\vect{x}^* \in [-1,1]^{\mathsf{d}}$ for which the Jacobian matrix of $(f_1,\ldots,f_{\mathsf{d}-1})$, given by
\[\begin{pmatrix} T_{n_1}'(x_1^*) & -T_{n_2}'(x_2^*) &  &  &   \\  & T_{n_2}'(x_2^*) & -T_{n_3}'(x_2^*)   & &  \\ && \ddots & \ddots & \\ &&& T_{n_{\mathsf{d}-1}}'(x_{\mathsf{d}-1}^*) & -T_{n_{\mathsf{d}}}'(x_{\mathsf{d}}^*) 
                       \end{pmatrix},
\]
has not full rank $\mathsf{d}-1$. This is exactly the case if
$T'_{n_{\mathsf{i}}}(x_{\mathsf{i}}^*) = 0$ for at least two indices $\mathsf{i} \in \{1, \ldots, \mathsf{d}\}$.  
We have $T'_{n_{\mathsf{i}}}(x_{\mathsf{i}}^*)=0$ if and only if  $x_{\mathsf{i}}^*=z^{(n_{\mathsf{i}})}_{i_{\mathsf{i}}}$
for some $i_{\mathsf{i}}\in \{1,\ldots,n_{\mathsf{i}}-1\}$. Since $\vect{x}^*\in \mathcal{C}^{(\vect{n})}$,  there exists $\mathfrak{r}\in\{0,1\}$ such that
\[ T_{n_1}(x_1^*) = \ldots = T_{n_{\mathsf{d}}}(x_{\mathsf{d}}^*)=(-1)^{\mathfrak{r}}.\]
We get  $\vect{x}^*=\vect{z}^{(\vect{n})}_{\vect{i}}\in \LC^{(\vect{n})}_{\mathfrak{r}}$. The general formula \eqref{v2:1508251536} can be derived straightforward.
Let  $\mathsf{M}\subseteq\{1,\ldots,\mathsf{d}\}$, $\vect{i}\in\I^{(\vect{n})}_{\mathsf{M}}$ and $\vect{x}^*=\vect{z}^{(\vect{n})}_{\vect{i}}$. If $\#\mathsf{M}\in \{0,1\}$, then the rank of the Jacobian matrix is $\mathsf{d}-1$, and if
$\#M\geq 2$, then the rank is $\mathsf{d}-\#M<\mathsf{d}-1$. 
\end{proof}

\begin{example} 

\begin{enumerate}
 \item[(i)] For $\mathsf{d} = 1$ and $n \in \mathbb{N}$, we obtain $\vect{\ell}_{0,1}^{(n)}(t) = \cos t$ as well as $\vect{\ell}_{0,1}^{(n)}([0,\pi]) = [-1,1]$. The points in $\LC^{(n)} = 
 \left\{\, z^{(n)}_{i}\,\left|\,i \in \{0, \ldots , n\} \right. \right\}$ are the univariate Chebyshev-Gauß-Lobatto points. 
 \item[(ii)] For $\mathsf{d} = 2$ and $n \in \mathbb{N}$, the families of the Padua points are given by the sets $\LD^{(n,n+1)}_{\vect{u}}$ and $\LD^{(n+1,n)}_{\vect{u}}$, $\vect{u} \in \{-1,1\}^2$. 
 The corresponding generating curves are the degenerate Lissajous curves $\vect{\ell}_{\vect{0},\vect{u}}^{(n,n+1)}$ or
 $\vect{\ell}_{\vect{0},\vect{u}}^{(n+1,n)}$ already considered in \cite{BosDeMarchiVianelloXu2006}. 
 Up to reflection with respect to the coordinate axis, the Padua points correspond to the point sets $\LC^{(n,n+1)}$ or $\LC^{(n+1,n)}$, respectively.
 \item[(iii)] The generating curve approach of the Padua points was generalized in \cite{Erb2015} to the degenerate Lissajous curves 
 $\vect{\ell}_{\vect{0},\vect{1}}^{(n+p,n)} (t) = ( \cos(n t), \cos( (n+p) t )$
 with relatively prime natural numbers $n$ and $p$. The corresponding interpolation nodes are given by the sets $\LC^{(n+p,n)} = \LD^{(n+p,n)}_{\vect{1}}$. 
 For $n = 3$ and $p = 2$, the set $\LC^{(5,3)}$ and the corresponding generating curve are given in Figure \ref{v2:fig:lissajous1}, (c). 
  \item[(iv)] For $\mathsf{d} = 3$, the degenerate Lissajous curve $\vect{\ell}_{\vect{0},\vect{1}}^{(\vect{n})}$ is given by  
 \[ \vect{\ell}_{\vect{0},\vect{1}}^{(\vect{n})} (t) = \left( \cos(n_2 n_3 t), \cos( n_1 n_3 t ), \cos (n_1 n_2 t ) \right).\] 
The corresponding set $\LD^{(\vect{n})}_{\vect{1}} = \LC^{(\vect{n})}$ contains $(n_1+1)(n_2+1)(n_3+1)/4$ node points, 
$(n_1+1)(n_2+1)(n_3+1)/4 - n_1 - n_2 - n_3 +1$ of which are self-intersection points of the curve $\vect{\ell}_{\vect{0},\vect{1}}^{(\vect{n})}$. For $\vect{n} = (5,3,2)$ and $\vect{n} = (3,1,2)$, such curves
are illustrated in Figure \ref{v2:fig:lissajous1}, (d) and Figure \ref{v2:fig:lissajous3}, (a), respectively. 
\end{enumerate}
\end{example}

\subsection{Discrete orthogonality structure}\label{v2:1507091240}

For $\vectgamma\in \mathbb{N}_0^{\mathsf{d}}$,  we define the functions $\dchi^{(\vect{n})}_{\indexvectgamma}\in \mathcal{L}(\I^{(\vect{n})})$ by
\begin{equation}\label{v2:A1508291531}
\dchi^{(\vect{n})}_{\indexvectgamma}(\vect{i})=\tprod_{\mathsf{j}=1}^{\mathsf{d}}\cos(\gamma_{\mathsf{j}}i_{\mathsf{j}}\pi/n_{\mathsf{j}}).
\end{equation}

\vspace{-0.5em}

\noindent For $\vect{i}\in \I^{(\vect{n})}$ we introduce the weights $\mathfrak{w}^{(\vect{n})}_{\vect{i}}$ by 
\begin{equation}\label{v2:1507091748}
\mathfrak{w}^{(\vect{n})}_{\vect{i}}=2^{\#\mathsf{M}}/(2 \p[\vect{n}])\quad \text{if}\ \ \vect{i}\in\I^{(\vect{n})}_{\mathsf{M}}.
\end{equation}
Furthermore, a   measure $\omega^{(\vect{n})}$ on the power set $\mathcal{P}(\I^{(\vect{n})})$  of $\I^{(\vect{n})}$  is
well-defined by the correspondent values $\omega^{(\vect{n})}(\{\vect{i}\})=\mathfrak{w}^{(\vect{n})}_{\vect{i}}$ for the one-element sets $\{\vect{i}\}\in\mathcal{P}(\I^{(\vect{n})})$.

\begin{proposition}\label{v2:1507211320} 
Let  $\vectgamma\in\mathbb{Z}^{\mathsf{d}}$. If $\tint\dchi^{(\vect{n})}_{\indexvectgamma}\mathrm{d}\rule{1pt}{0pt}\omega^{(\vect{n})}\neq 0$, then
\begin{equation}\label{v2:1507201132}
\text{there exists $\vect{h}\in\mathbb{N}_0^{\mathsf{d}}$ with $\gamma_{\mathsf{i}}=h_{\mathsf{i}}n_{\mathsf{i}}$, $\mathsf{i}=1,\ldots,\mathsf{d}$,  and 
$\tsum_{\mathsf{i}=1}^{\mathsf{d}}h_{\mathsf{i}}\in 2\mathbb{N}_0$}.
\end{equation}
If \eqref{v2:1507201132} is satisfied, then $\tint\dchi^{(\vect{n})}_{\indexvectgamma}\mathrm{d}\rule{1pt}{0pt}\omega^{(\vect{n})}=1$.
\end{proposition}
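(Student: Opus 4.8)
The plan is to turn the weighted sum $\tint\dchi^{(\vect{n})}_{\indexvectgamma}\,\mathrm{d}\omega^{(\vect{n})}=\tsum_{\vect{i}\in\I^{(\vect{n})}}\mathfrak{w}^{(\vect{n})}_{\vect{i}}\,\dchi^{(\vect{n})}_{\indexvectgamma}(\vect{i})$ into an unweighted trigonometric sum over $H^{(\vect{n})}$ by invoking Proposition \ref{v2:1509021542}. The crucial observation is that the fibre size $2^{\#\mathsf{M}}$ with which $\vect{i}^{(\vect{n})}$ attains an index $\vect{i}\in\I^{(\vect{n})}_{\mathsf{M}}$ (part c) is exactly $2\p[\vect{n}]$ times the weight $\mathfrak{w}^{(\vect{n})}_{\vect{i}}=2^{\#\mathsf{M}}/(2\p[\vect{n}])$. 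Moreover, writing $\vect{i}=\vect{i}^{(\vect{n})}(l)$ with $i_{\mathsf{j}}\equiv v_{\mathsf{j}}l\pmod{2n_{\mathsf{j}}}$, periodicity and evenness of the cosine give $\cos(\gamma_{\mathsf{j}}i_{\mathsf{j}}\pi/n_{\mathsf{j}})=\cos(\gamma_{\mathsf{j}}v_{\mathsf{j}}l\pi/n_{\mathsf{j}})=\cos(\gamma_{\mathsf{j}}l\pi/n_{\mathsf{j}})$. Regrouping the sum over $l\in H^{(\vect{n})}$ by the fibres of $\vect{i}^{(\vect{n})}$ should then give
\[
\tint\dchi^{(\vect{n})}_{\indexvectgamma}\,\mathrm{d}\omega^{(\vect{n})}=\frac1{2\p[\vect{n}]}\tsum_{l=0}^{2\p[\vect{n}]-1}\tprod_{\mathsf{j}=1}^{\mathsf{d}}\cos\Bigl(\frac{\gamma_{\mathsf{j}}l\pi}{n_{\mathsf{j}}}\Bigr).
\]

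Next I would evaluate this sum by a roots-of-unity argument. Since the cosine is even, I may assume $\vectgamma\in\mathbb{N}_0^{\mathsf{d}}$. Expanding each factor via $\cos\theta=\tfrac12(\mathrm{e}^{\mathrm{i}\theta}+\mathrm{e}^{-\mathrm{i}\theta})$ turns the product into a sum of $2^{\mathsf{d}}$ exponentials indexed by $\vect{\epsilon}\in\{-1,1\}^{\mathsf{d}}$, and the inner sum over $l$ becomes a geometric series whose ratio is the $2\p[\vect{n}]$-th root of unity $\exp(\mathrm{i}\pi N_{\vect{\epsilon}}/\p[\vect{n}])$, with integer frequency $N_{\vect{\epsilon}}=\tsum_{\mathsf{j}=1}^{\mathsf{d}}\epsilon_{\mathsf{j}}\gamma_{\mathsf{j}}\p_{\mathsf{j}}[\vect{n}]$. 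Such a series equals $2\p[\vect{n}]$ when $N_{\vect{\epsilon}}\equiv 0\pmod{2\p[\vect{n}]}$ and vanishes otherwise, so that
\[
\tint\dchi^{(\vect{n})}_{\indexvectgamma}\,\mathrm{d}\omega^{(\vect{n})}=\frac1{2^{\mathsf{d}}}\,\#\bigl\{\,\vect{\epsilon}\in\{-1,1\}^{\mathsf{d}}\,\bigm|\,N_{\vect{\epsilon}}\equiv 0\pmod{2\p[\vect{n}]}\,\bigr\}.
\]

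The final step is to analyze this congruence using the pairwise coprimality \eqref{v2:1509151442}. Reducing $N_{\vect{\epsilon}}$ modulo $n_{\mathsf{k}}$, every term with $\mathsf{j}\neq\mathsf{k}$ drops out because $n_{\mathsf{k}}\mid\p_{\mathsf{j}}[\vect{n}]$, leaving $N_{\vect{\epsilon}}\equiv\epsilon_{\mathsf{k}}\gamma_{\mathsf{k}}\p_{\mathsf{k}}[\vect{n}]\pmod{n_{\mathsf{k}}}$; since $\gcd(\p_{\mathsf{k}}[\vect{n}],n_{\mathsf{k}})=1$ and $\epsilon_{\mathsf{k}}=\pm1$, this vanishes modulo $n_{\mathsf{k}}$ exactly when $n_{\mathsf{k}}\mid\gamma_{\mathsf{k}}$. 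Hence a nonzero integral forces $\gamma_{\mathsf{k}}=h_{\mathsf{k}}n_{\mathsf{k}}$ for all $\mathsf{k}$ with some $h_{\mathsf{k}}\in\mathbb{N}_0$, and then $N_{\vect{\epsilon}}=\p[\vect{n}]\tsum_{\mathsf{k}}\epsilon_{\mathsf{k}}h_{\mathsf{k}}$, so the congruence collapses to $\tsum_{\mathsf{k}}\epsilon_{\mathsf{k}}h_{\mathsf{k}}\equiv 0\pmod 2$. Because $\epsilon_{\mathsf{k}}h_{\mathsf{k}}\equiv h_{\mathsf{k}}\pmod 2$, this parity is the same for every $\vect{\epsilon}$ and is equivalent to $\tsum_{\mathsf{k}}h_{\mathsf{k}}\in2\mathbb{N}_0$, i.e. to condition \eqref{v2:1507201132}. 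Thus the counted set is either all of $\{-1,1\}^{\mathsf{d}}$, giving the value $2^{\mathsf{d}}/2^{\mathsf{d}}=1$, or empty, giving $0$; this proves both assertions at once.

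I expect the main obstacle to lie in the first reduction: checking that the fibre counts of Proposition \ref{v2:1509021542} (part c) cancel the weights precisely and that the passage $i_{\mathsf{j}}\mapsto l$ leaves every cosine factor unchanged. After the integral has been rewritten as a uniform sum over $H^{(\vect{n})}$, the rest is a routine character computation; its one subtle point is the observation that the parity of $\tsum_{\mathsf{k}}\epsilon_{\mathsf{k}}h_{\mathsf{k}}$ is independent of the sign pattern $\vect{\epsilon}$, which is exactly what produces the all-or-nothing dichotomy and pins the nonzero value down to $1$.
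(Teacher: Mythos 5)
Your proof is correct and follows essentially the same route as the paper's: both use Proposition \ref{v2:1509021542}, part c), to trade the weighted sum over $\I^{(\vect{n})}$ for a uniform sum over $H^{(\vect{n})}$, reduce to exponential sums with integer frequencies $\tsum_{\mathsf{i}}\pm\gamma_{\mathsf{i}}\p_{\mathsf{i}}[\vect{n}]$, and finish with the same pairwise-coprimality and parity argument. The only differences are cosmetic: you regroup by fibres \emph{before} expanding the cosine product (which neatly sidesteps the paper's auxiliary $\vect{v}(l)$ bookkeeping) and evaluate the inner sum as a geometric series of roots of unity rather than via the closed-form cosine-sum identity \eqref{v2:1506171253}.
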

In the proof, we use the well-known trigonometric relations
\begin{equation}\label{v2:1507081828}
\tprod_{\mathsf{i}=1}^{\mathsf{r}}\cos(\vartheta_{\mathsf{i}})=
\dfrac1{2^{\mathsf{r}}}\tsum_{(v_1, \ldots, v_{\mathsf{r}}) \in\{-1,1\}^{\mathsf{r}}} \cos(v_1\vartheta_1+\cdots+v_{\mathsf{r}}\vartheta_{\mathsf{r}}), \quad \mathsf{r} \in \mathbb{N},
\end{equation}
\begin{equation}\label{v2:1506171253} 
\tsum_{l=0}^N \cos(l\vartheta-\vartheta_0)=\dfrac{\sin\left((N+1)\vartheta/2\right)\, \cos \left(N\vartheta/2-\vartheta_0\right) }{\sin(\vartheta/2)},\quad \vartheta\notin 2\pi\mathbb{Z},\ N\in\mathbb{N}_0.
\end{equation}
\begin{proof}
Using \eqref{v2:1507081828}, we obtain
\begin{align*}
\tint\dchi^{(\vect{n})}_{\indexvectgamma}\mathrm{d}\rule{1pt}{0pt}\omega^{(\vect{n})}
&= \dfrac1{2\p[\vect{n}]}\tsum_{\mathsf{M}\subseteq \{1, \ldots,\mathsf{d}\}}2^{\#\mathsf{M}}\tsum_{\vect{i}\in\I^{(\vect{n})}_{\mathsf{M}}}\tprod_{\mathsf{i}=1}^{\mathsf{d}}\cos(\gamma_{\mathsf{i}}i_{\mathsf{i}}\pi/n_{\mathsf{i}})\\
 &=\dfrac1{2^{\mathsf{d}}2\p[\vect{n}]}\tsum_{\vect{v}'\in \{-1,1\}^{\mathsf{d}}} \tsum_{\mathsf{M}\subseteq \{1, \ldots,\mathsf{d}\}}
2^{\#\mathsf{M}}\tsum_{\vect{i}\in\I^{(\vect{n})}_{\mathsf{M}}}
\cos\left(\pi\tsum_{\mathsf{i}=1}^{\mathsf{d}}v'_{\mathsf{i}}\gamma_{\mathsf{i}}i_{\mathsf{i}}/n_{\mathsf{i}}\right).
\end{align*}
We consider the notation and the statements of Proposition \ref{v2:1509021542}. If $\vect{i}^{(\vect{n})}(l)=\vect{i}$, then \eqref{v2:1509021546} is satisfied for some (not necessarily  uniquely determined) $\vect{v}=\vect{v}(l)\in\{-1,1\}^{\mathsf{d}}$.

Now, Proposition \ref{v2:1509021542}, c), implies
\begin{equation*}
\tint\dchi^{(\vect{n})}_{\indexvectgamma}\mathrm{d}\rule{1pt}{0pt}\omega^{(\vect{n})} = \dfrac1{2^{\mathsf{d}}2\p[\vect{n}]}\tsum_{\vect{v}'\in \{-1,1\}^{\mathsf{d}}}\tsum_{l=0}^{2\p[\vect{n}]-1}
\cos\left(l\pi\tsum_{\mathsf{i}=1}^{\mathsf{d}}v'_{\mathsf{i}} v_{\mathsf{i}}(l) \gamma_{\mathsf{i}}/n_{\mathsf{i}}\right).
\end{equation*}
Therefore, we have
\begin{equation}  \label{v2:1508221802}
\tint\dchi^{(\vect{n})}_{\indexvectgamma}\mathrm{d}\rule{1pt}{0pt}\omega^{(\vect{n})} = \dfrac1{2^{\mathsf{d}}2\p[\vect{n}]}\tsum_{\vect{v}\in \{-1,1\}^{\mathsf{d}}}\tsum_{l=0}^{2\p[\vect{n}]-1}
\cos\left(l\dfrac{\pi}{\p[\vect{n}]}\tsum_{\mathsf{i}=1}^{\mathsf{d}} v_{\mathsf{i}}\gamma_{\mathsf{i}}\p_{\mathsf{i}}[\vect{n}]\right).
\end{equation}
By \eqref{v2:1506171253}, this is zero 
if for all $\vect{v}\in\{-1,1\}^{\mathsf{d}}$ the number
\begin{equation}\label{v2:1507081849}
\dfrac1{\p[\vect{n}]}\tsum_{\mathsf{i}=1}^{\mathsf{d}}v_{\mathsf{i}}\gamma_{\mathsf{i}}\p_{\mathsf{i}}[\vect{n}]
\end{equation}
is not an element of $2\mathbb{Z}$. Let  $\tint\dchi^{(\vect{n})}_{\indexvectgamma}\mathrm{d}\rule{1pt}{0pt}\omega^{(\vect{n})}\neq 0$.  There exists $\vect{v}\in\{-1,1\}^{\mathsf{d}}$ such that \eqref{v2:1507081849} is in $2\mathbb{Z}$. Since the  $n_{\mathsf{i}}$, $\mathsf{i}=1,\ldots,\mathsf{d}$, 
are pairwise relatively prime, there is a $\vect{h}\in\mathbb{N}_0^{\mathsf{d}}$ such that $\gamma_{\mathsf{i}}=h_{\mathsf{i}}n_{\mathsf{i}}$, $\mathsf{i}=1,\ldots,\mathsf{d}$. Then,
\eqref{v2:1507081849} equals  $\tsum_{\mathsf{i}=1}^{\mathsf{d}} v_{\mathsf{i}}h_{\mathsf{i}}$ and this number is an element of $2\mathbb{Z}$ if and only if $h_1+\ldots+h_{\mathsf{d}}\in 2\mathbb{N}_0$. 

On  the other hand, if  \eqref{v2:1507201132} is satisfied, then the number \eqref{v2:1507081849} is always an even integer and therefore  the value of \eqref{v2:1508221802} is  $1$.
\end{proof}

We will show that the set of functions $\dchi^{(\vect{n})}_{\indexvectgamma}$ is a basis for the space $\mathcal{L}(\I^{(\vect{n})})$. To obtain a unique parametrization of these functions, the set 
\begin{equation}\label{v2:1508222042}
\vect{\Gamma}^{(\vect{n})} = \left\{\,\vectgamma\in\mathbb{N}_0^{\mathsf{d}}\ \left|\begin {array}{ll} 
\forall\,\mathsf{i}\in\{1,\ldots,\mathsf{d}\}:& \!\gamma_{\mathsf{i}} < n_{\mathsf{i}}, \\
\forall\,\mathsf{i},\mathsf{j} \ \text{with}\ \mathsf{i}\neq\mathsf{j} \;\hspace{1pt} :& \!\gamma_{\mathsf{i}}/n_{\mathsf{i}}+\gamma_{\mathsf{j}}/n_{\mathsf{j}} <  1
\end{array}\right. 
\right\}\cup\{(0,\ldots,0,n_{\mathsf{d}})\}
\end{equation} 
turns out to be suitable. Examples of the set $\vect{\Gamma}^{(\vect{n})}$ are given in Figure \ref{v2:fig:indexset1}. 

 \begin{figure}[htb]
	\centering
	\subfigure[\hspace*{1em} $\vect{\Gamma}^{(5,3)}$]{\includegraphics[scale=0.8]{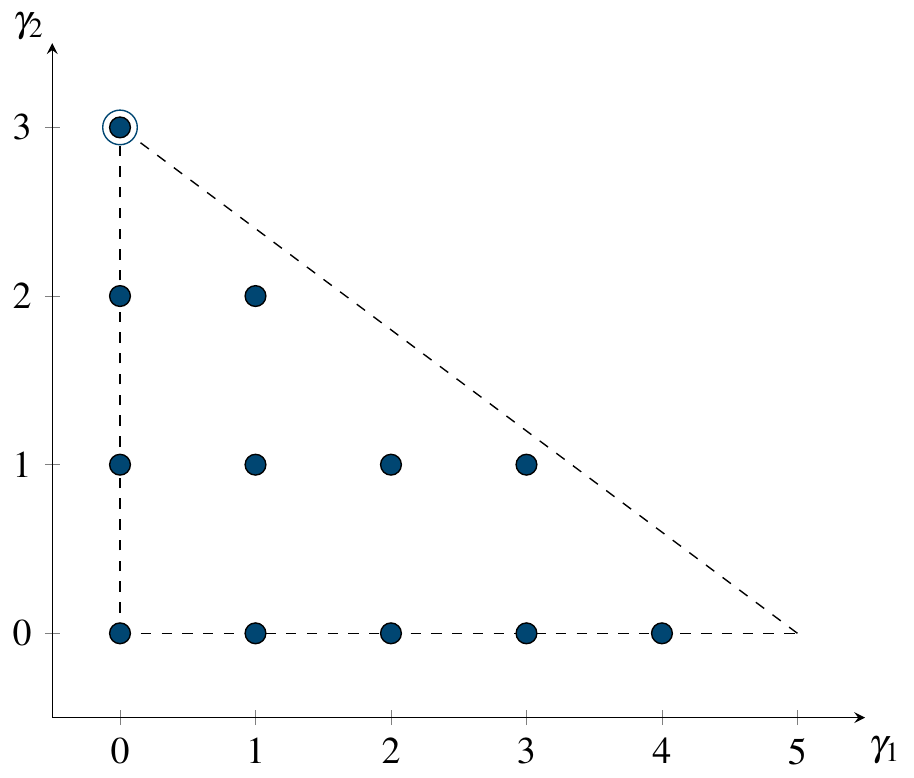}}
	\hfill	
	\subfigure[\hspace*{1em} $\vect{\Gamma}^{(5,3,2)}$]{\includegraphics[scale=0.8]{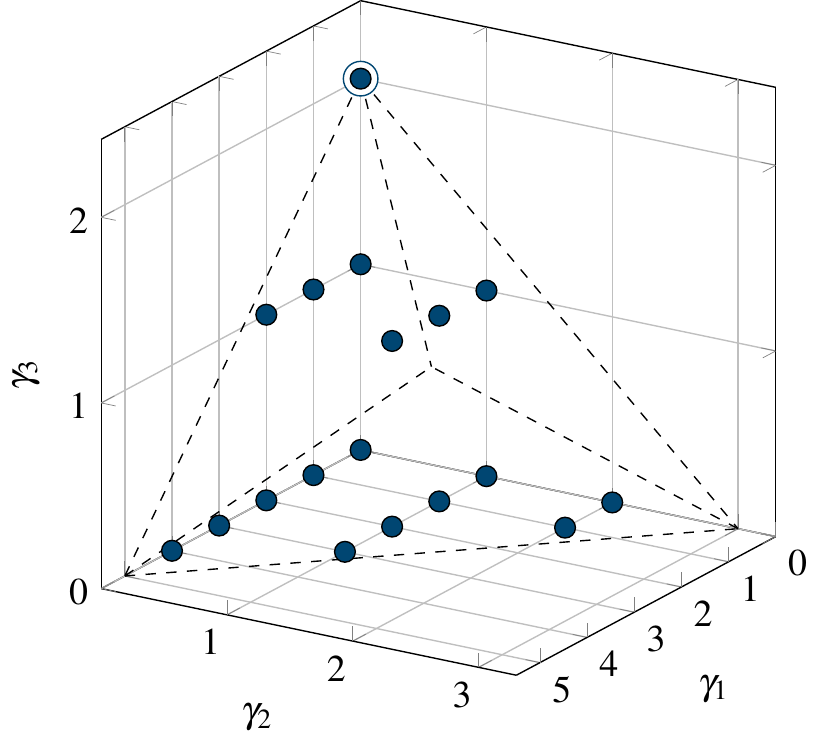}}
  	\caption{Examples of the sets $\vect{\Gamma}^{(\vect{n})}$ for
  	$\mathsf{d} = 2$ and $\mathsf{d} = 3$ related to $\I^{(\vect{n})}$ and $\LC^{(\vect{n})}$ given in Figure \ref{v2:fig:lissajous1}. 
  	The dotted lines mark the boundary of the polytopes enclosing the sets $\vect{\Gamma}^{(\vect{n})}$. The element 
  	$\{(0,\ldots,0,n_{\mathsf{d}})\}$ in \eqref{v2:1508222042} is marked
with an extra ring.
  	}
	\label{v2:fig:indexset1}
\end{figure}

\begin{proposition} \label{v2:1507151430} 
We have \[\#\vect{\Gamma}^{(\vect{n})}= \dfrac1{2^{\mathsf{d}-1}} \p[\vect{n}+\vect{1}].\]
\end{proposition}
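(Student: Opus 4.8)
The plan is to strip off the isolated extra node and then to sort the remaining points by their support. The point $(0,\ldots,0,n_{\mathsf{d}})$ has $\gamma_{\mathsf{d}}=n_{\mathsf{d}}$ and therefore does not satisfy the conditions $\gamma_{\mathsf{i}}<n_{\mathsf{i}}$ defining the first set in \eqref{v2:1508222042}; hence the union there is disjoint and, writing $\Gamma'=\vect{\Gamma}^{(\vect{n})}\setminus\{(0,\ldots,0,n_{\mathsf{d}})\}$ for that first set, we have $\#\vect{\Gamma}^{(\vect{n})}=\#\Gamma'+1$. Every $\vectgamma\in\Gamma'$ obeys $\gamma_{\mathsf{i}}<n_{\mathsf{i}}$, so its support $\mathsf{M}=\{\mathsf{i}:\gamma_{\mathsf{i}}>0\}$ is exactly the set of indices with $0<\gamma_{\mathsf{i}}<n_{\mathsf{i}}$, and $\gamma_{\mathsf{i}}=0$ off $\mathsf{M}$. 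A pairwise constraint involving a vanishing coordinate is automatic, so the constraints reduce exactly to those among the $\mathsf{M}$-coordinates. Denoting by $P(\vect{n}_{\mathsf{M}})$ the number of tuples $\vectgamma\in\prod_{\mathsf{i}\in\mathsf{M}}\{1,\ldots,n_{\mathsf{i}}-1\}$ with $\gamma_{\mathsf{i}}/n_{\mathsf{i}}+\gamma_{\mathsf{j}}/n_{\mathsf{j}}<1$ for all $\mathsf{i}\neq\mathsf{j}$ in $\mathsf{M}$, this gives $\#\Gamma'=\sum_{\mathsf{M}\subseteq\{1,\ldots,\mathsf{d}\}}P(\vect{n}_{\mathsf{M}})$, where $P(\vect{n}_{\emptyset})=1$ accounts for the origin.

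The heart of the matter is to show $P(\vect{n}_{\mathsf{M}})=\frac1{2^{\#\mathsf{M}-1}}\prod_{\mathsf{i}\in\mathsf{M}}(n_{\mathsf{i}}-1)$ for $\#\mathsf{M}\geq1$. I would let the reflection group $\{-1,1\}^{\#\mathsf{M}}$ act on the interior box $\prod_{\mathsf{i}\in\mathsf{M}}\{1,\ldots,n_{\mathsf{i}}-1\}$ via $\gamma_{\mathsf{i}}\mapsto n_{\mathsf{i}}-\gamma_{\mathsf{i}}$ and count the points of each orbit lying in the region. Setting $\alpha_{\mathsf{i}}=\min(\gamma_{\mathsf{i}},n_{\mathsf{i}}-\gamma_{\mathsf{i}})/n_{\mathsf{i}}\in(0,1/2]$ for the folded value, an orbit point lies in the region exactly when at most one coordinate takes its large value $1-\alpha_{\mathsf{i}}>1/2$, and in the one-large case the large coordinate must carry the strictly largest $\alpha_{\mathsf{i}}$. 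Here \eqref{v2:1509151442} is decisive: since the $n_{\mathsf{i}}$ are pairwise coprime, $\alpha_{\mathsf{i}}=\alpha_{\mathsf{j}}$ with $\mathsf{i}\neq\mathsf{j}$ forces $\alpha_{\mathsf{i}}=\alpha_{\mathsf{j}}=0$, which is impossible for interior points; hence the $\alpha_{\mathsf{i}}$ are pairwise distinct and the maximum is unique. A generic orbit (no coordinate equal to $n_{\mathsf{i}}/2$) then has full size $2^{\#\mathsf{M}}$ and contributes exactly $2$ points: the all-small configuration, and the one flipping the unique maximal $\alpha_{\mathsf{i}}$.

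The only subtlety, again controlled by coprimality, is that at most one $n_{\mathsf{k}}$ can be even; the corresponding value $\gamma_{\mathsf{k}}=n_{\mathsf{k}}/2$ is fixed by the reflection, so the orbit has size $2^{\#\mathsf{M}-1}$, and $f_{\mathsf{k}}=1/2$ forces all other coordinates to their small value, giving exactly one region point per such orbit. Splitting the $\prod_{\mathsf{i}\in\mathsf{M}}(n_{\mathsf{i}}-1)$ interior points into the $\prod_{\mathsf{i}\neq\mathsf{k}}(n_{\mathsf{i}}-1)$ points with $\gamma_{\mathsf{k}}=n_{\mathsf{k}}/2$ (half orbits, one region point each) and the remaining $(n_{\mathsf{k}}-2)\prod_{\mathsf{i}\neq\mathsf{k}}(n_{\mathsf{i}}-1)$ points (full orbits, two region points each), a one-line computation yields $\frac1{2^{\#\mathsf{M}-1}}\prod_{\mathsf{i}\in\mathsf{M}}(n_{\mathsf{i}}-1)$; the all-odd case is the same count without the second term. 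I expect this orbit bookkeeping — verifying that nothing is over- or undercounted on the boundary and at the fixed even coordinate — to be the main obstacle.

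Finally I would assemble the total. By the second identity in \eqref{1608141316}, doubled, one has $\sum_{\mathsf{M}}\frac1{2^{\#\mathsf{M}-1}}\prod_{\mathsf{i}\in\mathsf{M}}(n_{\mathsf{i}}-1)=\frac1{2^{\mathsf{d}-1}}\p[\vect{n}+\vect{1}]$, whose $\mathsf{M}=\emptyset$ term equals $2$; thus the nonempty-$\mathsf{M}$ contributions sum to $\frac1{2^{\mathsf{d}-1}}\p[\vect{n}+\vect{1}]-2$. Adding the true value $P(\vect{n}_{\emptyset})=1$ gives $\#\Gamma'=\frac1{2^{\mathsf{d}-1}}\p[\vect{n}+\vect{1}]-1$, and the isolated point restores the missing unit, so $\#\vect{\Gamma}^{(\vect{n})}=\frac1{2^{\mathsf{d}-1}}\p[\vect{n}+\vect{1}]$. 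Notably, the discrepancy between $P(\vect{n}_{\emptyset})=1$ and the value $2$ predicted by the $P$-formula at $\mathsf{M}=\emptyset$ is exactly what the extra node in \eqref{v2:1508222042} compensates.
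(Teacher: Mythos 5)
Your argument is correct, and it reaches the count by a genuinely different route than the paper. The paper's proof introduces the two ``box'' subsets $\vect{\Gamma}^{(\vect{n})}_0$ and $\vect{\Gamma}^{(\vect{n})}_1$ of \eqref{v2:150829826} and shows that the single reflection $\mathfrak{s}^{(\vect{n})}$ (which flips only the coordinate realizing the unique maximum of $\gamma_{\mathsf{i}}/n_{\mathsf{i}}$) is a bijection from $\vect{\Gamma}^{(\vect{n})}_1$ onto $\vect{\Gamma}^{(\vect{n})}\setminus\vect{\Gamma}^{(\vect{n})}_0$; this gives $\#\vect{\Gamma}^{(\vect{n})}=\#\vect{\Gamma}^{(\vect{n})}_0+\#\vect{\Gamma}^{(\vect{n})}_1=\#\I^{(\vect{n})}_0+\#\I^{(\vect{n})}_1$ and the claim follows from \eqref{v2:A1505031622} with no further summation. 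You instead stratify $\vect{\Gamma}^{(\vect{n})}\setminus\{(0,\ldots,0,n_{\mathsf{d}})\}$ by support, let the full reflection group $\{-1,1\}^{\#\mathsf{M}}$ act on the interior box, and count region points per orbit (two for a free orbit, one for the half-orbit at $\gamma_{\mathsf{k}}=n_{\mathsf{k}}/2$), then assemble with the doubled second identity of \eqref{1608141316}. Both arguments pivot on the same consequence of \eqref{v2:1509151442} --- pairwise coprimality forces the nonzero ratios $\gamma_{\mathsf{i}}/n_{\mathsf{i}}$ to be pairwise distinct, so the maximum is unique --- and both must treat the at most one even $n_{\mathsf{k}}$ separately. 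The paper's version is shorter and exhibits the correspondence with $\I^{(\vect{n})}_0\cup\I^{(\vect{n})}_1$ directly; yours buys the refined stratified count $\frac1{2^{\#\mathsf{M}-1}}\tprod_{\mathsf{i}\in\mathsf{M}}(n_{\mathsf{i}}-1)$ for each nonempty support $\mathsf{M}$, which mirrors $\#\I^{(\vect{n})}_{\mathsf{M}}$ in \eqref{v2:A1505031632} and makes the role of the appended element $(0,\ldots,0,n_{\mathsf{d}})$ transparent. (Two trivial slips: ``$f_{\mathsf{k}}=1/2$'' should read $\alpha_{\mathsf{k}}=1/2$, and your orbit dichotomy tacitly uses that for $\#\mathsf{M}=1$ every orbit point lies in the region, which is consistent with your general rule but deserves a word.)
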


\newpage

\begin{proof}
For $\vect{m}\in\mathbb{N}^{\mathsf{d}}$ and $\vectgamma\in\mathbb{N}^{\mathsf{d}}_0$, we set
$\max^{(\vect{m})}[\vectgamma]=\max\{\,\gamma_{\mathsf{i}}/m_{\mathsf{i}}\,|\,\mathsf{i}\in\{1,\ldots,\mathsf{d}\}\}$
and $\mathsf{k}^{(\vect{m})}[\vectgamma]=\max\{\,\mathsf{i}\in\{1,\ldots,\mathsf{d}\}\,|\,\gamma_{\mathsf{i}}/m_{\mathsf{i}}=\max\nolimits^{(\vect{m})}[\vectgamma]\,\}$. Further, we  
 define $\mathfrak{s}^{(\vect{m})}(\vectgamma)$ by
\begin{equation}\label{v2:1509051005}
\mathfrak{s}^{(\vect{m})}(\vectgamma)=(\gamma'_1,\ldots,\gamma'_{\mathsf{d}})\quad\text{with}\quad \gamma'_{\mathsf{i}}=\left\{\begin {array}{rl} 
m_{\mathsf{i}}-\gamma_{\mathsf{i},} &\text{if $\mathsf{i}=\mathsf{k}^{(\vect{m})}[\vectgamma]$},\\
\gamma_{\mathsf{i}},&\text{otherwise,}\end{array}\right.
\end{equation}
and use it for $\vect{m} = \vect{n}$ in this proof. We consider the subsets 
\begin{equation}\label{v2:150829826}
\vect{\Gamma}^{(\vect{n})}_0 = 
\left\{\,\vectgamma\in\ \mathbb{N}_0^{\mathsf{d}}\,\left|\ \forall\,\mathsf{i}:\ 2\gamma_{\mathsf{i}}\leq n_{\mathsf{i}}\right. 
\right\},\quad \vect{\Gamma}^{(\vect{n})}_1 = 
\left\{\,\vectgamma\in  \mathbb{N}_0^{\mathsf{d}}\,\left|\ \forall\,\mathsf{i}:\ 2\gamma_{\mathsf{i}}<n_{\mathsf{i}}\right. \right\}
\end{equation} of $\vect{\Gamma}^{(\vect{n})}$. From the cross product structure of the sets \eqref{v2:150829826} it is easily seen
that the cardinalities of $\vect{\Gamma}^{(\vect{n})}_0$ and $\vect{\Gamma}^{(\vect{n})}_1$ correspond to
the cardinalities of $\I^{(\vect{n})}_0$ and $\I^{(\vect{n})}_1$, respectively.
We will show that $\mathfrak{s}^{(\vect{n})}$ is a bijection from $\vect{\Gamma}^{(\vect{n})}_1$ onto 
$\vect{\Gamma}^{(\vect{n})}\setminus \vect{\Gamma}^{(\vect{n})}_0$.
Having this, the cardinality of 
$\vect{\Gamma}^{(\vect{n})}$ can be derived directly as
$\#\vect{\Gamma}^{(\vect{n})} = \#\vect{\Gamma}^{(\vect{n})}_0 + \#\vect{\Gamma}_1^{(\vect{n})} = \#\I^{(\vect{n})}_0+\#\I^{(\vect{n})}_1$ with the values given in \eqref{v2:A1505031622}.

Obviously, $\mathfrak{s}^{(\vect{n})}(\vect{0})=(0,\ldots,0,n_{\mathsf{d}})$. Now, suppose that $\vectgamma\in \vect{\Gamma}^{(\vect{n})}_1\setminus\{\vect{0}\}$. 
Since the integers $n_{\mathsf{i}}$ 
are pairwise relatively prime, there exists a $\mathsf{k}$ such that
\begin{equation}\label{v2:1505012254}
\forall\,\mathsf{i}\in \{1,\ldots,\mathsf{d}\}\setminus\{\mathsf{k}\}:\quad  \gamma_{\mathsf{i}}/n_{\mathsf{i}}<\gamma_{\mathsf{k}}/n_{\mathsf{k}}
\end{equation}
and by definition  we have $\mathsf{k}^{(\vect{n})}[\vectgamma]=\mathsf{k}$.  Let $\vectgamma'=\mathfrak{s}^{(\vect{n})}(\vectgamma)$. Then, \eqref{v2:1505012254} gives
\begin{equation}\label{v2:1505021153}
\gamma'_{\mathsf{i}}/n_{\mathsf{i}}+\gamma'_{\mathsf{k}}/n_{\mathsf{k}}=1+\gamma_{\mathsf{i}}/n_{\mathsf{i}}-\gamma_{\mathsf{k}}/n_{\mathsf{k}}<1\ \text{for all $\mathsf{i}\neq \mathsf{k}$}.
\end{equation}
This and $\vectgamma\in\vect{\Gamma}^{(\vect{n})}_1$ imply $\vectgamma'\in\vect{\Gamma}^{(\vect{n})}$. Further, since   $\gamma'_{\mathsf{k}} > n_{\mathsf{k}}/2$, 
we have  $\vectgamma'\in \vect{\Gamma}^{(\vect{n})}\setminus \vect{\Gamma}^{(\vect{n})}_0$.

On the other hand, let  $\vectgamma'\in \vect{\Gamma}^{(\vect{n})}\setminus \vect{\Gamma}^{(\vect{n})}_0$. Then, there exists a $\mathsf{k}$ such that 
$\gamma'_{\mathsf{k}}>n_{\mathsf{k}}/2$. By the definition \eqref{v2:1508222042} of $\vect{\Gamma}^{(\vect{n})}$, we have $\gamma'_{\mathsf{i}}< n_{\mathsf{k}}/2$
for all $\mathsf{i}\neq\mathsf{k}$. Therefore, $\mathsf{k}^{(\vect{n})}[\vectgamma']=\mathsf{k}$. Set $\vectgamma=\mathfrak{s}^{(\vect{n})}(\vectgamma')$.  
If $\vectgamma'=(0,\ldots,0,n_{\mathsf{d}})$, then $\vectgamma=\vect{0}$. 
If $\vectgamma'\neq (0,\ldots,0,n_{\mathsf{d}})$, then \eqref{v2:1505021153}
implies \eqref{v2:1505012254} and thus $\mathsf{k}^{(\vect{n})}[\vectgamma]=\mathsf{k}$. In both cases we have
 $\vectgamma'=\mathfrak{s}^{(\vect{n})}(\vectgamma)$. Therefore, the function $\mathfrak{s}^{(\vect{n})}$ is surjective from 
 $\vect{\Gamma}^{(\vect{n})}_1$ onto $\vect{\Gamma}^{(\vect{n})}\setminus \vect{\Gamma}^{(\vect{n})}_0$. Furthermore, we have shown that $\mathfrak{s}^{(\vect{n})}(\mathfrak{s}^{(\vect{n})}(\vectgamma)))=\vectgamma$ for all $\vectgamma\in \vect{\Gamma}^{(\vect{n})}_1$. Therefore, the mapping $\mathfrak{s}^{(\vect{n})}$ is a bijective function from  $\vect{\Gamma}^{(\vect{n})}_1$ onto $\vect{\Gamma}^{(\vect{n})}\setminus \vect{\Gamma}^{(\vect{n})}_0$.
\end{proof}

We use the notation
\begin{equation}\label{v2:1508251214}\mathfrak{e}(\vectgamma)=\#\{\,\mathsf{i}\in\{1,\ldots,\mathsf{d}\}\,|\,\gamma_{\mathsf{i}}>0\,\},
\end{equation} 
and complete this section with the following orthogonality result.

\begin{theorem}\label{v2:1507091911}
The functions $\dchi^{(\vect{n})}_{\indexvectgamma}$, $\vectgamma\in \vect{\Gamma}^{(\vect{n})}$, form an  
orthogonal basis of the inner product space $(\mathcal{L}(\I^{(\vect{n})}),\langle\,\cdot,\cdot\,\rangle_{\omega^{(\vect{n})}})$. Further, the norms of the basis functions satisfy
\begin{equation}\label{v2:1508221825}
 \|\dchi^{(\vect{n})}_{\indexvectgamma}\|_{\omega^{(\vect{n})}}^2 =
 \left\{ \begin{array}{cl}  2^{-\mathfrak{e}(\indexvectgamma)},\; & \text{if}\quad \vectgamma \in\vect{\Gamma}^{(\vect{n})}\setminus\{(0,\ldots,0,n_{\mathsf{d}})\},\\
   1,\; & \text{if}\quad \vectgamma =(0,\ldots,0,n_{\mathsf{d}}).
\end{array} \right. 
\end{equation}

\end{theorem}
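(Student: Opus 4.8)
The plan is to evaluate the inner product $\langle\dchi^{(\vect{n})}_{\indexvectgamma},\dchi^{(\vect{n})}_{\indexvectgamma'}\rangle_{\omega^{(\vect{n})}}$ by reducing it to a sum of the integrals already computed in Proposition \ref{v2:1507211320}. Since each $\dchi^{(\vect{n})}_{\indexvectgamma}$ is real, the inner product equals $\tint\dchi^{(\vect{n})}_{\indexvectgamma}\dchi^{(\vect{n})}_{\indexvectgamma'}\,\mathrm{d}\omega^{(\vect{n})}$. Applying the case $\mathsf{r}=2$ of the product-to-sum identity \eqref{v2:1507081828} factorwise to $\cos(\gamma_{\mathsf{j}}i_{\mathsf{j}}\pi/n_{\mathsf{j}})\cos(\gamma'_{\mathsf{j}}i_{\mathsf{j}}\pi/n_{\mathsf{j}})$ and using that $\cos$ is even, I would expand the product over $\mathsf{j}$ into
\[\dchi^{(\vect{n})}_{\indexvectgamma}\,\dchi^{(\vect{n})}_{\indexvectgamma'}=\dfrac1{2^{\mathsf{d}}}\tsum_{\vect{s}\in\{-1,1\}^{\mathsf{d}}}\dchi^{(\vect{n})}_{\indexvectgamma+\vect{s}\indexvectgamma'},\]
where $\vect{s}\indexvectgamma'$ denotes the vector with components $s_{\mathsf{j}}\gamma'_{\mathsf{j}}$. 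By Proposition \ref{v2:1507211320}, each summand integrates to $0$ or to $1$, the value $1$ occurring precisely when $n_{\mathsf{i}}\mid(\gamma_{\mathsf{i}}+s_{\mathsf{i}}\gamma'_{\mathsf{i}})$ for all $\mathsf{i}$ and $\tsum_{\mathsf{i}}|\gamma_{\mathsf{i}}+s_{\mathsf{i}}\gamma'_{\mathsf{i}}|/n_{\mathsf{i}}$ is even. Hence $2^{\mathsf{d}}\langle\dchi^{(\vect{n})}_{\indexvectgamma},\dchi^{(\vect{n})}_{\indexvectgamma'}\rangle_{\omega^{(\vect{n})}}$ is exactly the number of admissible sign vectors $\vect{s}$, and the whole theorem reduces to counting them.

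For orthogonality I first treat $\vectgamma,\vectgamma'$ in the main part of $\vect{\Gamma}^{(\vect{n})}$, i.e.\ different from $(0,\ldots,0,n_{\mathsf{d}})$, so that all components lie in $[0,n_{\mathsf{i}})$. Then for each $\mathsf{i}$ the divisibility $n_{\mathsf{i}}\mid(\gamma_{\mathsf{i}}+s_{\mathsf{i}}\gamma'_{\mathsf{i}})$ forces either $\gamma_{\mathsf{i}}=\gamma'_{\mathsf{i}}$ (contributing $h_{\mathsf{i}}=0$, via $s_{\mathsf{i}}=-1$) or $\gamma_{\mathsf{i}}+\gamma'_{\mathsf{i}}=n_{\mathsf{i}}$ with $s_{\mathsf{i}}=1$ (contributing $h_{\mathsf{i}}=1$). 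The key observation is that the defining inequality $\gamma_{\mathsf{i}}/n_{\mathsf{i}}+\gamma_{\mathsf{j}}/n_{\mathsf{j}}<1$ of $\vect{\Gamma}^{(\vect{n})}$, applied to both $\vectgamma$ and $\vectgamma'$ and added, excludes two distinct indices with $\gamma_{\mathsf{i}}+\gamma'_{\mathsf{i}}=n_{\mathsf{i}}$. Thus if some admissible $\vect{s}$ exists, then either $\gamma_{\mathsf{i}}=\gamma'_{\mathsf{i}}$ for all $\mathsf{i}$, whence $\vectgamma=\vectgamma'$; or there is a unique index $\mathsf{i}_0$ with $\gamma_{\mathsf{i}_0}+\gamma'_{\mathsf{i}_0}=n_{\mathsf{i}_0}$ and $\gamma_{\mathsf{i}_0}\neq\gamma'_{\mathsf{i}_0}$, which forces $h_{\mathsf{i}_0}=1$ while all remaining $h_{\mathsf{i}}=0$, so $\tsum_{\mathsf{i}}h_{\mathsf{i}}=1$ is odd and the parity condition fails. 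In either case no admissible $\vect{s}$ survives when $\vectgamma\neq\vectgamma'$, and the inner product vanishes.

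For the norms ($\vectgamma=\vectgamma'$) the quantity $\gamma_{\mathsf{i}}+s_{\mathsf{i}}\gamma_{\mathsf{i}}$ is $0$ for $s_{\mathsf{i}}=-1$ and $2\gamma_{\mathsf{i}}$ for $s_{\mathsf{i}}=1$; divisibility then holds for both signs at the $\mathsf{d}-\mathfrak{e}(\indexvectgamma)$ indices with $\gamma_{\mathsf{i}}=0$ and, for nonzero components, only for $s_{\mathsf{i}}=-1$, with the sole possible exception $2\gamma_{\mathsf{i}}=n_{\mathsf{i}}$, where the extra choice $s_{\mathsf{i}}=1$ gives $h_{\mathsf{i}}=1$ and is again discarded by the odd-parity failure. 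This leaves exactly $2^{\mathsf{d}-\mathfrak{e}(\indexvectgamma)}$ admissible sign vectors, so $\|\dchi^{(\vect{n})}_{\indexvectgamma}\|^2_{\omega^{(\vect{n})}}=2^{-\mathfrak{e}(\indexvectgamma)}$. The exceptional element $\vectgamma=(0,\ldots,0,n_{\mathsf{d}})$ I would handle separately: its last coordinate gives $h_{\mathsf{d}}=0$ for $s_{\mathsf{d}}=-1$ and $h_{\mathsf{d}}=2$ for $s_{\mathsf{d}}=1$, both even, so all $2^{\mathsf{d}}$ sign vectors are admissible and the norm is $1$; and pairing it against any other $\vectgamma$, the only index pattern that could survive forces $\vectgamma=\vect{0}$, which yields $\tsum_{\mathsf{i}}h_{\mathsf{i}}=1$ (odd), so orthogonality persists. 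Finally, since the $\dchi^{(\vect{n})}_{\indexvectgamma}$ are nonzero and mutually orthogonal they are linearly independent, and because $\#\vect{\Gamma}^{(\vect{n})}=\tfrac1{2^{\mathsf{d}-1}}\p[\vect{n}+\vect{1}]=\#\I^{(\vect{n})}=\dim\mathcal{L}(\I^{(\vect{n})})$ by Proposition \ref{v2:1507151430} together with \eqref{v2:A1505031632}, they form a basis.

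The main obstacle is the orthogonality step, and specifically the interplay between the polytope inequality defining $\vect{\Gamma}^{(\vect{n})}$ and the parity condition from Proposition \ref{v2:1507211320}: the inequality guarantees at most one wrap-around index with $\gamma_{\mathsf{i}}+\gamma'_{\mathsf{i}}=n_{\mathsf{i}}$, and the parity condition then annihilates the single sign vector this index would otherwise contribute. Getting the bookkeeping of the $h_{\mathsf{i}}$ exactly right in the borderline cases $\gamma_{\mathsf{i}}=0$ and $2\gamma_{\mathsf{i}}=n_{\mathsf{i}}$ (the latter possible only for the at most one even $n_{\mathsf{i}}$), together with the separate treatment of $(0,\ldots,0,n_{\mathsf{d}})$, is where the care lies.
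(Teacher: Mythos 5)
Your proposal is correct and follows essentially the same route as the paper: the product-to-sum reduction to Proposition \ref{v2:1507211320}, counting admissible sign vectors with $h_{\mathsf{i}}\in\{0,1\}$, using the polytope inequality of $\vect{\Gamma}^{(\vect{n})}$ to rule out two wrap-around indices and the parity condition to kill the single remaining one, and the dimension count via Proposition \ref{v2:1507151430}. The only difference is organizational (you isolate $(0,\ldots,0,n_{\mathsf{d}})$ up front, while the paper folds it into one contradiction argument), and your handling of the borderline cases $\gamma_{\mathsf{i}}=0$ and $2\gamma_{\mathsf{i}}=n_{\mathsf{i}}$ matches the paper's.
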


\medskip

\begin{proof}
Let $\vectgamma',\vectgamma''\in \vect{\Gamma}^{(\vect{n})}$  be fixed. For $\vect{v}\in\{-1,1\}^{\mathsf{d}}$, we introduce
\begin{equation}\label{v2:1508251455}
\vectgamma(\vect{v})=(|\gamma'_1+v_1\gamma''_1|,\ldots,|\gamma'_{\mathsf{d}}+v_{\mathsf{d}}\gamma''_{\mathsf{d}}|).
\end{equation}
We consider the set 
\[\left\{\left.\,\vect{v}\in\{-1,1\}^{\mathsf{d}}\,\right|\,\vectgamma = \vectgamma(\vect{v})\ \text{satisfies \eqref{v2:1507201132}}\right\}\]
and denote the number of elements in this set with $A$. Using the definition \eqref{v2:A1508291531} and the trigonometric identity \eqref{v2:1507081828} with $\mathsf{r}=2$, we obtain 
\begin{equation}
\label{v2:1507222159}
\dchi^{(\vect{n})}_{\indexvectgamma'}\dchi^{(\vect{n})}_{\indexvectgamma''}=\dfrac1{2^{\mathsf{d}}}\tsum_{\vect{v}\in\{-1,1\}^{\mathsf{d}}}\dchi^{(\vect{n})}_{\indexvectgamma(\vect{v})}.
\end{equation} 
Thus, by Proposition \ref{v2:1507211320}, the value of $\tint\dchi^{(\vect{n})}_{\indexvectgamma'}\dchi^{(\vect{n})}_{\indexvectgamma''}\mathrm{d}\rule{1pt}{0pt}\omega^{(\vect{n})}$ equals $A/2^{\mathsf{d}}$.

Assume that $\vectgamma'\neq\vectgamma''$ and $A > 0$. Since $A > 0$, we have a $\vect{v}\in \{-1,1\}^{\mathsf{d}}$ such that the condition \eqref{v2:1507201132} is satisfied for  $\vectgamma=\vectgamma(\vect{v})$. We consider the $\vect{h} \in \mathbb{N}_0^{\mathsf{d}}$ according to \eqref{v2:1507201132} with $\vectgamma=\vectgamma(\vect{v})$.
Since $\vectgamma'\neq \vectgamma''$, there has to be an $\mathsf{i} \in \{1, \ldots,\mathsf{d}\}$ such that $h_{\mathsf{i}}>0$. 
Further, since $\vectgamma'\neq \vectgamma''$, at least one of the two vectors $\vectgamma'$ and $\vectgamma''$ is different from $(0,\ldots,0,n_{\mathsf{d}})$. 
Thus, by the definition \eqref{v2:1508222042} of $\vect{\Gamma}^{(\vect{n})}$, we always have 
\[0<h_{\mathsf{i}}=|\gamma'_{\mathsf{i}}/n_{\mathsf{i}}+v_{\mathsf{i}}\gamma''_{\mathsf{i}}/n_{\mathsf{i}}|
\leq \gamma'_{\mathsf{i}}/n_{\mathsf{i}}+\gamma''_{\mathsf{i}}/n_{\mathsf{i}}< 1+1 = 2,\]
i.e. we have   $h_{\mathsf{i}}=1$.  As  $h_1+\ldots+h_{\mathsf{d}}\in 2\mathbb{N}_0$, there is a $\mathsf{j}\neq \mathsf{i}$ such that  $h_{\mathsf{j}}> 0$. Therefore both,  
$\vectgamma'$ and $\vectgamma''$,  are different from $(0,\ldots,0,n_{\mathsf{d}})$. If $v_{\mathsf{k}}=-1$ holds 
for some  $\mathsf{k}$, then  $n_{\mathsf{k}}$ divides $\gamma'_{\mathsf{k}}-\gamma_{\mathsf{k}}''$ and thus, since  $0\leq \gamma'_{\mathsf{k}},\gamma_{\mathsf{k}}''<n_{\mathsf{k}}$, 
we have $\gamma'_{\mathsf{k}}=\gamma_{\mathsf{k}}''=0$, i.e. $h_{\mathsf{k}}= 0$. We conclude $v_{\mathsf{i}}=v_{\mathsf{j}}=1$. Now, the definition 
of  $\vect{\Gamma}^{(\vect{n})}$ yields  the contradiction 
\[2\leq h_{\mathsf{i}}+h_{\mathsf{j}}=\gamma_{\mathsf{i}}'/n_{\mathsf{i}}+\gamma_{\mathsf{j}}'/n_{\mathsf{j}}+\gamma''_{\mathsf{i}}/n_{\mathsf{i}}+\gamma''_{\mathsf{j}}/n_{\mathsf{j}}<1+1.\]

On the other hand, let $\vectgamma'=\vectgamma''$. If $\vectgamma'= (0,\ldots,0,n_{\mathsf{d}})$, then $A=2^{\mathsf{d}}$. Now, consider 
$\vectgamma' \in \vect{\Gamma}^{(\vect{n})} \setminus (0,\ldots,0,n_{\mathsf{d}})$. Then, condition \eqref{v2:1507201132} is satisfied for $\vectgamma=\vectgamma(\vect{v})$ if and only if $h_{\mathsf{i}}= (1+v_{\mathsf{i}}) \gamma'_{\mathsf{i}}/n_{\mathsf{i}} \in \{0,1\}$ for all $\mathsf{i}$ and $\tsum_{\mathsf{i}=1}^{\mathsf{d}}h_{\mathsf{i}}\in 2\mathbb{N}_0$. 
Note that $0\leq \gamma'_{\mathsf{i}}<n_{\mathsf{i}}$ for all $\mathsf{i}$, and that 
at most one of the $n_{\mathsf{i}}$ is even.  Therefore, \eqref{v2:1507201132} holds for $\vectgamma=\vectgamma(\vect{v})$ if and only if $h_{\mathsf{i}} = 0$ for all $\mathsf{i} \in \{1,\ldots, \mathsf{d}\}$. The number of all $\vect{v} \in \{-1,1\}^{\mathsf{d}}$ such that $\vectgamma=\vectgamma(\vect{v})$ satisfies \eqref{v2:1507201132} is therefore given by $A=2^{\mathsf{d}-\mathfrak{e}(\indexvectgamma')}$. 

The just shown formula \eqref{v2:1508221825} implies that
$\dchi^{(\vect{n})}_{\indexvectgamma}\neq 0$ for all $\vectgamma$. 
Thus, since the functions $\dchi^{(\vect{n})}_{\indexvectgamma}$, $\vectgamma \in \vect{\Gamma}^{(\vect{n})}$,
are pairwise orthogonal, they are in particular linearly independent. Further, 
since $\#\vect{\Gamma}^{(\vect{n})}=\#\I^{(\vect{n})}=\dim \mathcal{L}(\I^{(\vect{n})})$ by Proposition \ref{v2:1507151430}, and \eqref{v2:A1505031632},
the functions $\dchi^{(\vect{n})}_{\indexvectgamma}$, $\vectgamma \in \vect{\Gamma}^{(\vect{n})}$, form a basis of the vector space $\mathcal{L}(\I^{(\vect{n})})$.
\end{proof}

\subsection{Polynomial interpolation} \label{v2:sec:polyinter1649}
Based on the univariate Chebyshev polynomials in \eqref{v2:1508251445}, we introduce for $\vectgamma\in\mathbb{N}_0^{\mathsf{d}}$ 
the {\it $\mathsf{d}$-variate Chebyshev polynomials} by
\[T_{\indexvectgamma}(\vect{x})=  T_{\gamma_1}(x_1) \cdot \ldots \cdot T_{\gamma_{\mathsf{d}}}(x_{\mathsf{d}}),\quad \vect{x}\in[-1,1]^{\mathsf{d}}.\]
Let $\Pi^{\mathsf{d}}$ be the complex vector space of all $\mathsf{d}$-variate polynomial functions $[-1,1]^{\mathsf{d}}\to\mathbb{C}$. 
From the well-known properties of the Chebyshev polynomials $T_{\gamma}$ of the first kind it follows immediately (cf. \cite{DunklXu}) that 
the $T_{\indexvectgamma}(\vect{x})$, $\vectgamma\in\mathbb{N}_0^{\mathsf{d}}$, form an orthogonal basis of $\Pi^{\mathsf{d}}$
with respect to the inner product defined by
 \begin{equation} \label{v2:1508220014}
  \langle f,g \rangle = \frac{1}{\pi^{\mathsf{d}}} \int_{[-1,1]^{\mathsf{d}}} f(\vect{x}) \overline{g(\vect{x})} w_{\mathsf{d}}(\vect{x})\,\mathrm{d}\vect{x}, \quad w_{\mathsf{d}}(\vect{x})  = \tprod_{\mathsf{i} = 1}^{\mathsf{d}}  \displaystyle\frac{1}{\sqrt{1-x_{\mathsf{i}}^2}}.
 \end{equation}
The norms of these basis elements can easily be computed as
\begin{equation} \label{v2:1508210758}
\|T_{\indexvectgamma}\|^2  = 2^{-\mathfrak{e}(\indexvectgamma)}. \end{equation}

\medskip

Now, we investigate the points $\vect{z}^{(\vect{n})}_{\vect{i}} \in \LC^{(\vect{n})}$, $\vect{i} \in \I^{(\vect{n})}$, with regard to $\mathsf{d}$-variate polynomial
interpolation on  $[-1,1]^{\mathsf{d}}$. We are searching for an interpolation polynomial $P^{(\vect{n})}_h$ that for given data values $h(\vect{i}) \in \mathbb{R}$, $\vect{i} \in \I^{(\vect{n})}$,  satisfies
\begin{equation}\label{v2:1508220011}
 P^{(\vect{n})}_h (\vect{z}^{(\vect{n})}_{\vect{i}}) = h({\vect{i}}) \quad \text{for all}\quad \vect{i} \in \I^{(\vect{n})}.
\end{equation}

We have to specify an appropriate polynomial space as an underlying space for this  polynomial interpolation problem. 
For all $\vectgamma\in\mathbb{N}_0^{\mathsf{d}}$ and $\vect{i}\in \I^{(\vect{n})}$, we have
\begin{equation}\label{v2:1508201411}
T_{\indexvectgamma}(\vect{z}^{(\vect{n})}_{\vect{i}}) = \dchi^{(\vect{n})}_{\indexvectgamma}(\vect{i}).
\end{equation}
Therefore,
we have a direct relation between the basis polynomials $T_{\indexvectgamma}$, the points $\vect{z}^{(\vect{n})}_{\vect{i}} \in \LC^{(\vect{n})}$, 
and the functions $\dchi^{(\vect{n})}_{\indexvectgamma}$ on $\I^{(\vect{n})}$ defined in \eqref{v2:A1508291531}.
The relation \eqref{v2:1508201411} and the results of Section \ref{v2:1507091240} motivate the introduction of the polynomial space
\[\Pi^{(\vect{n})} = \vspan \left\{\, T_{\indexvectgamma}\,\left|\, \vectgamma \in \vect{\Gamma}^{(\vect{n})} \right. \right\}.\]

The set $ \{\, T_{\indexvectgamma}\,|\, \vectgamma \in \vect{\Gamma}^{(\vect{n})} \}$ is an orthogonal basis of the space $\Pi^{(\vect{n})}$ with
respect to the inner product given in \eqref{v2:1508220014}. 
The inner product space $(\Pi^{(\vect{n})}, \langle\cdot\,,\,\cdot\rangle)$ has the \textit{reproducing kernel}
\[K^{(\vect{n})}(\vect{x},\vect{x}') = \tsum_{\indexvectgamma \in \vect{\Gamma}^{(\vect{n})}} \dfrac1{\|T_{\indexvectgamma}\|^2} T_{\indexvectgamma}(\vect{x}) T_{\indexvectgamma}(\vect{x}'),\quad \vect{x},\vect{x}'\in [-1,1]^{\mathsf{d}},
\]
i.e. all polynomials $P\in \Pi^{(\vect{n})}$ can be represented in the form
\[P(\vect{x})=\langle P , K^{(\vect{n})}(\vect{x},\,\cdot\,) \,\rangle,\quad \vect{x}\in [-1,1]^{\mathsf{d}}.\]
We use the weights given in \eqref{v2:1507091748}. For $\vect{i} \in \I^{(\vect{n})}$, we introduce the polynomials 
\begin{equation} \label{v2:1508220009}
 L^{(\vect{n})}_{\vect{i}}(\vect{x}) = \mathfrak{w}^{(\vect{n})}_{\vect{i}} \left( K^{(\vect{n})}(\vect{x}, \vect{z}^{(\vect{n})}_{\vect{i}} ) 
 - T_{n_{\mathsf{d}}}(x_{\mathsf{d}})\;\! T_{n_{\mathsf{d}}}(z^{(n_d)}_{i_{\mathsf{d}}}) \right), \quad \vect{x} \in [-1,1]^{\mathsf{d}},
\end{equation}
and obtain the following result for polynomial interpolation on the point set $\LC^{(\vect{n})}$.

\newpage

\begin{theorem}\label{v2:1509011746}  For $h\in\mathcal{L}(\I^{(\vect{n})})$, the interpolation problem \eqref{v2:1508220011} has the uniquely determined solution  
\begin{equation}\label{v2:1508211556} 
P^{(\vect{n})}_h = \sum_{\vect{i} \in \I^{(\vect{n})}} h({\vect{i}}) L^{(\vect{n})}_{\vect{i}}
\end{equation}
 in the polynomial space $\Pi^{(\vect{n})}$. Further, $ \vspan \{\,P^{(\vect{n})}_h\,|\,h\in \mathcal{L}(\I^{(\vect{n})})\,\}=\Pi^{(\vect{n})}$ 
 and the polynomials $L^{(\vect{n})}_{\vect{i}}$, $\vect{i}\in\I^{(\vect{n})}$, form a basis of  $\Pi^{(\vect{n})}$. 
\end{theorem}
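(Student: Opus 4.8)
The plan is to deduce all three assertions from the single \emph{Lagrange property}
\[
L^{(\vect{n})}_{\vect{i}}(\vect{z}^{(\vect{n})}_{\vect{j}})=\delta_{\vect{i},\vect{j}},\qquad \vect{i},\vect{j}\in\I^{(\vect{n})},
\]
whose verification is the only substantial step. Write $\vectgamma^{\ast}=(0,\ldots,0,n_{\mathsf{d}})$ for the exceptional index in \eqref{v2:1508222042}. The main obstacle is a deliberate mismatch between two norms: comparing \eqref{v2:1508210758} with the discrete norms \eqref{v2:1508221825} of Theorem \ref{v2:1507091911}, one has $\|T_{\indexvectgamma}\|^2=\|\dchi^{(\vect{n})}_{\indexvectgamma}\|_{\omega^{(\vect{n})}}^2=2^{-\mathfrak{e}(\indexvectgamma)}$ for every $\vectgamma\in\vect{\Gamma}^{(\vect{n})}\setminus\{\vectgamma^{\ast}\}$, whereas at $\vectgamma^{\ast}$ these values are $\tfrac12$ and $1$, respectively. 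The correction term in the definition \eqref{v2:1508220009} of $L^{(\vect{n})}_{\vect{i}}$ exists precisely to absorb this single discrepancy.

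To carry this out, I would first evaluate the reproducing kernel at two nodes. By \eqref{v2:1508201411}, $T_{\indexvectgamma}(\vect{z}^{(\vect{n})}_{\vect{k}})=\dchi^{(\vect{n})}_{\indexvectgamma}(\vect{k})$, so
\[
K^{(\vect{n})}(\vect{z}^{(\vect{n})}_{\vect{j}},\vect{z}^{(\vect{n})}_{\vect{i}})
=\tsum_{\indexvectgamma\in\vect{\Gamma}^{(\vect{n})}}\dfrac{\dchi^{(\vect{n})}_{\indexvectgamma}(\vect{j})\dchi^{(\vect{n})}_{\indexvectgamma}(\vect{i})}{\|T_{\indexvectgamma}\|^{2}}
=\tsum_{\indexvectgamma\in\vect{\Gamma}^{(\vect{n})}}\dfrac{\dchi^{(\vect{n})}_{\indexvectgamma}(\vect{j})\dchi^{(\vect{n})}_{\indexvectgamma}(\vect{i})}{\|\dchi^{(\vect{n})}_{\indexvectgamma}\|_{\omega^{(\vect{n})}}^{2}}
+\dchi^{(\vect{n})}_{\indexvectgamma^{\ast}}(\vect{j})\,\dchi^{(\vect{n})}_{\indexvectgamma^{\ast}}(\vect{i}),
\]
where the extra summand comes from the difference $2-1$ of the reciprocal norms at $\vectgamma^{\ast}$. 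Expanding the Dirac function $\delta_{\vect{i}}$ in the orthogonal basis of Theorem \ref{v2:1507091911} and using $\langle\delta_{\vect{i}},\dchi^{(\vect{n})}_{\indexvectgamma}\rangle_{\omega^{(\vect{n})}}=\mathfrak{w}^{(\vect{n})}_{\vect{i}}\dchi^{(\vect{n})}_{\indexvectgamma}(\vect{i})$ gives the discrete reproducing identity
\[
\tsum_{\indexvectgamma\in\vect{\Gamma}^{(\vect{n})}}\dfrac{\dchi^{(\vect{n})}_{\indexvectgamma}(\vect{i})\dchi^{(\vect{n})}_{\indexvectgamma}(\vect{j})}{\|\dchi^{(\vect{n})}_{\indexvectgamma}\|_{\omega^{(\vect{n})}}^{2}}=\dfrac{\delta_{\vect{i},\vect{j}}}{\mathfrak{w}^{(\vect{n})}_{\vect{i}}}.
\]
Since $\dchi^{(\vect{n})}_{\indexvectgamma^{\ast}}(\vect{k})=T_{n_{\mathsf{d}}}(z^{(n_{\mathsf{d}})}_{k_{\mathsf{d}}})$, the product $\dchi^{(\vect{n})}_{\indexvectgamma^{\ast}}(\vect{j})\dchi^{(\vect{n})}_{\indexvectgamma^{\ast}}(\vect{i})$ is exactly the term subtracted in \eqref{v2:1508220009}; multiplying by $\mathfrak{w}^{(\vect{n})}_{\vect{i}}$ then yields the Lagrange property.

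Given the Lagrange property, the rest is routine linear algebra. Each $L^{(\vect{n})}_{\vect{i}}$ lies in $\Pi^{(\vect{n})}$, since $K^{(\vect{n})}(\,\cdot\,,\vect{z}^{(\vect{n})}_{\vect{i}})$ is by definition a combination of the $T_{\indexvectgamma}$, $\vectgamma\in\vect{\Gamma}^{(\vect{n})}$, and $T_{n_{\mathsf{d}}}(x_{\mathsf{d}})=T_{\indexvectgamma^{\ast}}(\vect{x})$ with $\vectgamma^{\ast}\in\vect{\Gamma}^{(\vect{n})}$. Hence $P^{(\vect{n})}_h=\tsum_{\vect{i}\in\I^{(\vect{n})}}h(\vect{i})L^{(\vect{n})}_{\vect{i}}\in\Pi^{(\vect{n})}$ and $P^{(\vect{n})}_h(\vect{z}^{(\vect{n})}_{\vect{j}})=h(\vect{j})$, which proves existence. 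For uniqueness I would count dimensions: since the $T_{\indexvectgamma}$, $\vectgamma\in\vect{\Gamma}^{(\vect{n})}$, form a basis of $\Pi^{(\vect{n})}$, Proposition \ref{v2:1507151430}, formula \eqref{v2:A1505031632}, and the bijection $\vect{i}\mapsto\vect{z}^{(\vect{n})}_{\vect{i}}$ give $\dim\Pi^{(\vect{n})}=\#\vect{\Gamma}^{(\vect{n})}=\#\I^{(\vect{n})}=\dim\mathcal{L}(\I^{(\vect{n})})$, so the evaluation map $\Pi^{(\vect{n})}\to\mathcal{L}(\I^{(\vect{n})})$, $P\mapsto(P(\vect{z}^{(\vect{n})}_{\vect{i}}))_{\vect{i}}$, is a linear map between spaces of equal dimension; existence makes it surjective, hence bijective, so the interpolant is unique. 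Finally, evaluating a relation $\tsum_{\vect{i}}c_{\vect{i}}L^{(\vect{n})}_{\vect{i}}=0$ at $\vect{z}^{(\vect{n})}_{\vect{j}}$ forces $c_{\vect{j}}=0$; thus the $L^{(\vect{n})}_{\vect{i}}$ are linearly independent, and being $\#\I^{(\vect{n})}=\dim\Pi^{(\vect{n})}$ in number they form a basis. As $L^{(\vect{n})}_{\vect{i}}=P^{(\vect{n})}_{\delta_{\vect{i}}}$, their span equals $\vspan\{P^{(\vect{n})}_h\mid h\in\mathcal{L}(\I^{(\vect{n})})\}=\Pi^{(\vect{n})}$.
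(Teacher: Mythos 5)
Your proposal is correct and follows essentially the same route as the paper: both proofs reduce everything to the Lagrange property $L^{(\vect{n})}_{\vect{i}}(\vect{z}^{(\vect{n})}_{\vect{j}})=\delta_{\vect{i},\vect{j}}$, obtained by translating the kernel sum into the discrete setting via \eqref{v2:1508201411}, exploiting that $\|T_{\indexvectgamma}\|^{2}=\|\dchi^{(\vect{n})}_{\indexvectgamma}\|_{\omega^{(\vect{n})}}^{2}$ except at $(0,\ldots,0,n_{\mathsf{d}})$, where the subtracted term in \eqref{v2:1508220009} absorbs the discrepancy, and then applying the orthogonal expansion of $\delta_{\vect{i}}$ from Theorem \ref{v2:1507091911}; the concluding dimension count via Proposition \ref{v2:1507151430} and \eqref{v2:A1505031632} is likewise identical in substance.
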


 The polynomial  $L^{(\vect{n})}_{\vect{i}}$ is the unique solution of the interpolation problem \eqref{v2:1508220011}
for $h=\delta_{\vect{i}}$. The polynomials $L^{(\vect{n})}_{\vect{i}}$, $\vect{i}\in\I^{(\vect{n})}$,  are called the 
{\it fundamental solutions}.

\medskip

\begin{proof} For every $h\in \mathcal{L}(\I^{(\vect{n})})$, we define $P^{(\vect{n})}_h$ by \eqref{v2:1508211556}. 
By definition \eqref{v2:1508220009}, we have $L^{(\vect{n})}_{\vect{i}}\in\Pi^{(\vect{n})}$ for $\vect{i}\in \I^{(\vect{n})}$, and therefore $P^{(\vect{n})}_h\in \Pi^{(\vect{n})}$.

For $\vect{i}\in\I^{(\vect{n})}$, let $h_{\vect{i}}\in\mathcal{L}(\I^{(\vect{n})})$ be given by  $h_{\vect{i}}(\vect{i}')=L^{(\vect{n})}_{\vect{i}}(\vect{z}_{\vect{i}'}^{(\vect{n})})$, $\vect{i}'\in\I^{(\vect{n})}$. 
Taking into account the relation \eqref{v2:1508201411} as well as \eqref{v2:1508221825} and \eqref{v2:1508210758}, we can conclude
\begin{equation}\label{v2:1508211739}\begin{split}
h_{\vect{i}}(\vect{i}')&=\mathfrak{w}^{(\vect{n})}_{\vect{i}} \left(\, \tsum_{\indexvectgamma \in \vect{\Gamma}^{(\vect{n})}} \dfrac1{\|T_{\indexvectgamma}\|^2} T_{\indexvectgamma}(\vect{z}_{\vect{i}}^{(\vect{n})}) T_{\indexvectgamma}(\vect{z}_{\vect{i}'}^{(\vect{n})})- T_{n_{\mathsf{d}}}(z_{i_{\mathsf{d}}}^{(n_{\mathsf{d}})})\;\! T_{n_{\mathsf{d}}}(z_{i'_{\mathsf{d}}}^{(n_{\mathsf{d}})}) \right) 
\\
&=\mathfrak{w}^{(\vect{n})}_{\vect{i}} \tsum_{\indexvectgamma \in \vect{\Gamma}^{(\vect{n})}} \dfrac1{\|\dchi^{(\vect{n})}_{\indexvectgamma}\|_{\omega^{(\vect{n})}}^2}\dchi^{(\vect{n})}_{\indexvectgamma}(\vect{i})\dchi^{(\vect{n})}_{\indexvectgamma}(\vect{i}').
\end{split}
\end{equation}
Theorem \ref{v2:1507091911} and \eqref{v2:1508211739} imply that for  every $\vectgamma\in\vect{\Gamma}^{(\vect{n})}$ we have 
\[\langle h_{\vect{i}},\dchi^{(\vect{n})}_{\indexvectgamma}\rangle_{\omega^{(\vect{n})}}=\mathfrak{w}^{(\vect{n})}_{\vect{i}}\dchi^{(\vect{n})}_{\indexvectgamma}(\vect{i}).\]
Further, by definition of $\langle\,\cdot,\cdot\,\rangle_{\omega^{(\vect{n})}}$ we have $\langle \delta_{\vect{i}},\dchi^{(\vect{n})}_{\indexvectgamma}\rangle_{\omega^{(\vect{n})}}=\mathfrak{w}^{(\vect{n})}_{\vect{i}}\dchi^{(\vect{n})}_{\indexvectgamma}(\vect{i})$ for all $\vectgamma\in\vect{\Gamma}^{(\vect{n})}$. 
Since the $\dchi^{(\vect{n})}_{\indexvectgamma}$, $\vectgamma\in\vect{\Gamma}^{(\vect{n})}$, form a basis of $\mathcal{L}(\I^{(\vect{n})})$,
it follows that $h_{\vect{i}}=\delta_{\vect{i}}$ for all $\vect{i}\in \mathcal{L}(\I^{(\vect{n})})$.  This implies that the function  $P^{(\vect{n})}_h$ given in \eqref{v2:1508211556}
fulfills \eqref{v2:1508220011}. 

Clearly, the homomorphism  $h\mapsto P^{(\vect{n})}_h$  from the vector space $\mathcal{L}(\I^{(\vect{n})})$ 
into the vector space $\Pi^{(\vect{n})}$  is injective. Thus,
since $\dim \mathcal{L}(\I^{(\vect{n})}) = \#\I^{(\vect{n})}=\#\vect{\Gamma}^{(\vect{n})}=\dim \Pi^{(\vect{n})}$ by Proposition \ref{v2:1507151430}  and \eqref{v2:A1505031632}, 
this homomorphism is  bijective onto  $\Pi^{(\vect{n})}$. 
\end{proof}

Now, we consider the  uniquely determined coefficients $c_{\indexvectgamma}(h)$ in the expansion
\begin{equation}\label{v2:1509241909B}
P^{(\vect{n})}_h = \sum_{\indexvectgamma \in \vect{\Gamma}^{(\vect{n})}} c_{\indexvectgamma}(h)\;\!T_{\indexvectgamma}
\end{equation}
of the interpolating polynomial $P^{(\vect{n})}_h$ in the orthogonal basis $T_{\indexvectgamma}$, $\vectgamma \in \vect{\Gamma}^{(\vect{n})}$, of the polynomial space  $\Pi^{(\vect{n})}$.
With Theorem \ref{v2:1509011746}, we obtain the following identity. 

\begin{corollary}
For $h\in\mathcal{L}(\I^{(\vect{n})})$, the coefficients $c_{\indexvectgamma}(h)$ in \eqref{v2:1509241909B} are given by
\[c_{\indexvectgamma}(h)=\dfrac1{\|\dchi^{(\vect{n})}_{\indexvectgamma}\|_{\omega^{(\vect{n})}}^2}\langle\;\! h,\dchi^{(\vect{n})}_{\indexvectgamma}\rangle_{\omega^{(\vect{n})}}.\]
\end{corollary}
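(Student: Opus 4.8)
The plan is to avoid the continuous inner product \eqref{v2:1508220014} altogether and instead reduce the determination of the coefficients to the discrete orthogonality of Theorem \ref{v2:1507091911}. The two ingredients I would combine are the expansion \eqref{v2:1509241909B} of $P^{(\vect{n})}_h$ in the basis $T_{\indexvectgamma}$, $\vectgamma\in\vect{\Gamma}^{(\vect{n})}$, and the interpolation property \eqref{v2:1508220011} guaranteed by Theorem \ref{v2:1509011746}.

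First I would evaluate the expansion \eqref{v2:1509241909B} at a node $\vect{z}^{(\vect{n})}_{\vect{i}}$, $\vect{i}\in\I^{(\vect{n})}$. Invoking the identity \eqref{v2:1508201411}, that is $T_{\indexvectgamma}(\vect{z}^{(\vect{n})}_{\vect{i}})=\dchi^{(\vect{n})}_{\indexvectgamma}(\vect{i})$, together with the interpolation property $P^{(\vect{n})}_h(\vect{z}^{(\vect{n})}_{\vect{i}})=h(\vect{i})$, this yields
\[ h(\vect{i}) = \sum_{\indexvectgamma\in\vect{\Gamma}^{(\vect{n})}} c_{\indexvectgamma}(h)\,\dchi^{(\vect{n})}_{\indexvectgamma}(\vect{i}),\qquad \vect{i}\in\I^{(\vect{n})}. \]
In other words, $h=\sum_{\indexvectgamma\in\vect{\Gamma}^{(\vect{n})}} c_{\indexvectgamma}(h)\,\dchi^{(\vect{n})}_{\indexvectgamma}$ holds as an identity in $\mathcal{L}(\I^{(\vect{n})})$.

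Next I would take the discrete inner product of this identity with a fixed basis function $\dchi^{(\vect{n})}_{\indexvectgamma}$, $\vectgamma\in\vect{\Gamma}^{(\vect{n})}$. By Theorem \ref{v2:1507091911} the functions $\dchi^{(\vect{n})}_{\indexvectgamma'}$, $\vectgamma'\in\vect{\Gamma}^{(\vect{n})}$, are pairwise orthogonal with respect to $\langle\,\cdot,\cdot\,\rangle_{\omega^{(\vect{n})}}$, so every cross term vanishes and only the term with $\vectgamma'=\vectgamma$ survives, leaving
\[ \langle h,\dchi^{(\vect{n})}_{\indexvectgamma}\rangle_{\omega^{(\vect{n})}} = c_{\indexvectgamma}(h)\,\|\dchi^{(\vect{n})}_{\indexvectgamma}\|^2_{\omega^{(\vect{n})}}. \]
Since $\|\dchi^{(\vect{n})}_{\indexvectgamma}\|^2_{\omega^{(\vect{n})}}>0$ by \eqref{v2:1508221825}, dividing by this norm gives exactly the claimed formula.

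I do not expect any genuine obstacle here: the argument is simply the transfer of the classical Fourier-coefficient formula into the discrete setting, made rigorous precisely by the discrete orthogonality. The one point deserving a moment's attention is that the coefficients $c_{\indexvectgamma}(h)$ occupy the \emph{linear} slot of $\langle\,\cdot,\dchi^{(\vect{n})}_{\indexvectgamma}\rangle_{\omega^{(\vect{n})}}$; consequently they may be pulled out of the inner product without complex conjugation, which is why the formula comes out clean even for complex-valued $h\in\mathcal{L}(\I^{(\vect{n})})$.
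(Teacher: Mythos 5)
Your proof is correct. The paper prints no separate argument --- it only notes that the corollary follows ``with Theorem \ref{v2:1509011746}'' --- and the intended route is to read the coefficients off the explicit representation $P^{(\vect{n})}_h=\sum_{\vect{i}\in\I^{(\vect{n})}} h(\vect{i})\, L^{(\vect{n})}_{\vect{i}}$ of \eqref{v2:1508211556}: interchanging the sums over $\vect{i}$ and $\vectgamma$ in \eqref{v2:1508220009} and using \eqref{v2:1508201411} together with the norm identities \eqref{v2:1508221825} and \eqref{v2:1508210758} (the subtracted term involving $T_{n_{\mathsf{d}}}$ exactly converts $1/\|T_{\indexvectgamma}\|^2$ into $1/\|\dchi^{(\vect{n})}_{\indexvectgamma}\|_{\omega^{(\vect{n})}}^2$ for the exceptional index $(0,\ldots,0,n_{\mathsf{d}})$, cf.\ \eqref{v2:1508211739}). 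You instead use Theorem \ref{v2:1509011746} only for the existence and the interpolation property of $P^{(\vect{n})}_h$ in $\Pi^{(\vect{n})}$, transfer the expansion \eqref{v2:1509241909B} to an identity in $\mathcal{L}(\I^{(\vect{n})})$ via \eqref{v2:1508201411} (legitimate, since $\vect{i}\mapsto\vect{z}^{(\vect{n})}_{\vect{i}}$ is a bijection onto the node set), and then apply the discrete orthogonality of Theorem \ref{v2:1507091911}. This buys you independence from the explicit form of the fundamental polynomials $L^{(\vect{n})}_{\vect{i}}$ and from the bookkeeping around the exceptional index, at the cost of not reusing the computation already carried out in the proof of Theorem \ref{v2:1509011746}. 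Your closing remark is accurate and, in fact, doubly harmless here: the coefficients occupy the linear slot of $\langle\,\cdot,\cdot\,\rangle_{\omega^{(\vect{n})}}$, and the functions $\dchi^{(\vect{n})}_{\indexvectgamma}$ are real-valued, so no conjugation issues arise in either slot.
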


Using this formula, the coefficients $c_{\indexvectgamma}(h)$ can be computed efficiently using discrete cosine transforms along the $\mathsf{d}$ dimensions of the index set 
\[ \J^{(\vect{n})}= \bigtimes_{\substack{\vspace{-8pt}\\\mathsf{i}=1}}^{\substack{\mathsf{d}\\\vspace{-10pt}}} \{0,\ldots,n_{\mathsf{i}}\}.\]
We introduce
\[ g^{(\vect{n})}(\vect{i}) = \left\{ \begin{array}{cl} \mathfrak{w}^{(\vect{n})}_{\vect{i}} h(\vect{i}), \quad  & \text{if}\; \vect{i}\in\I^{(\vect{n})},\rule[-0.65em]{0pt}{1em}\\ 
       0, \quad 
  & \text{if}\; \vect{i}\in\J^{(\vect{n})} \setminus \I^{(\vect{n})}, \end{array} \right. \]
and, recursively for $\mathsf{j}=1,\ldots,\mathsf{d}$, we set
\[g^{(\vect{n})}_{(\gamma_1,\ldots,\gamma_{\mathsf{j}})}(i_{\mathsf{j}+1},\ldots,i_{\mathsf{d}})= \sum_{i_{\mathsf{j}}=0}^{n_{\mathsf{j}}}
g^{(\vect{n})}_{(\gamma_1,\ldots,\gamma_{\mathsf{j}-1})}(i_{\mathsf{j}},\ldots,i_{\mathsf{d}})\cos(\gamma_{\mathsf{j}}i_{\mathsf{j}}\pi/n_{\mathsf{j}}).\]
Then, since \eqref{v2:1508221825}, we have
\[c_{\indexvectgamma}(h) =  
 \left\{ \begin{array}{rl}  2^{\mathfrak{e}(\indexvectgamma)}g^{(\vect{n})}_{\indexvectgamma},\; & \text{if}\quad 
 \vectgamma \in\vect{\Gamma}^{(\vect{n})}\setminus\{(0,\ldots,0, n_{\mathsf{d}})\},\\[0.3em]
   g^{(\vect{n})}_{\indexvectgamma},\; & \text{if}\quad \vectgamma =(0,\ldots,0, n_{\mathsf{d}}). \end{array}\right.
\]
Using the fast cosine transform $\mathsf{d}$ times, the complexity for the computation of the set of coefficients $c_{\indexvectgamma}(h)$ is of order $\mathcal{O}\!\left(\!\!\;\p[\vect{n}] \ln \!\!\;\p[\vect{n}] \right)$. 
Once the coefficients $c_{\indexvectgamma}(h)$ are computed, the evaluation of the interpolating polynomial $P^{(\vect{n})}_h(\vect{x})$ at $\vect{x} \in [-1,1]^{\mathsf{d}}$ is carried out with help of formula 
\eqref{v2:1509241909B}.

\medskip

We can also formulate a quadrature rule formula for $\mathsf{d}$-variate polynomials.

\begin{theorem} \label{v2:1509011747}
Let $P$ be  a  $\mathsf{d}$-variate polynomial function $[-1,1]^{\mathsf{d}}\to\mathbb{C}$.  If 
 \[ \text{$\langle P, T_{\indexvectgamma} \rangle = 0$ for all $\vectgamma \in\mathbb{N}_0^{\mathsf{d}} \setminus \{ \vect{0} \}$,  satisfying \eqref{v2:1507201132},}\] then 
\begin{equation} \label{v2:1508220012}
\frac{1}{\pi^{\mathsf{d}}} \int_{[-1,1]^{\mathsf{d}}} P(\vect{x}) w(\vect{x}) \,\mathrm{d}\vect{x} = \sum_{\vect{i} \in \I^{(\vect{n})}} \mathfrak{w}^{(\vect{n})}_{\vect{i}} P(\vect{z}^{(\vect{n})}_{\vect{i}}). 
\end{equation}
\end{theorem}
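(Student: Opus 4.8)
The plan is to expand $P$ in the orthogonal Chebyshev basis and reduce both sides of \eqref{v2:1508220012} to the coefficient of the constant polynomial. Since $P\in\Pi^{\mathsf{d}}$, it has a \emph{finite} expansion $P=\sum_{\indexvectgamma\in\mathbb{N}_0^{\mathsf{d}}} c_{\indexvectgamma}T_{\indexvectgamma}$, where by orthogonality of the $T_{\indexvectgamma}$ with respect to $\langle\,\cdot,\cdot\,\rangle$ we have $c_{\indexvectgamma}=\langle P,T_{\indexvectgamma}\rangle/\|T_{\indexvectgamma}\|^2$. The hypothesis then states precisely that $c_{\indexvectgamma}=0$ for every $\vectgamma\neq\vect{0}$ satisfying \eqref{v2:1507201132}.

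For the left-hand side I would observe that $T_{\vect{0}}\equiv 1$ and, by \eqref{v2:1508210758}, $\|T_{\vect{0}}\|^2=2^{-\mathfrak{e}(\vect{0})}=1$, so that
\[\frac{1}{\pi^{\mathsf{d}}}\int_{[-1,1]^{\mathsf{d}}}P(\vect{x})\,w_{\mathsf{d}}(\vect{x})\,\mathrm{d}\vect{x}=\langle P,T_{\vect{0}}\rangle=c_{\vect{0}}.\]
For the right-hand side I would invoke the identity \eqref{v2:1508201411}, $T_{\indexvectgamma}(\vect{z}^{(\vect{n})}_{\vect{i}})=\dchi^{(\vect{n})}_{\indexvectgamma}(\vect{i})$, together with the defining property $\sum_{\vect{i}\in\I^{(\vect{n})}}\mathfrak{w}^{(\vect{n})}_{\vect{i}}f(\vect{i})=\tint f\,\mathrm{d}\rule{1pt}{0pt}\omega^{(\vect{n})}$ of the measure $\omega^{(\vect{n})}$. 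Substituting the finite expansion of $P$ and interchanging the (finite) sum with the integral gives
\[\sum_{\vect{i}\in\I^{(\vect{n})}}\mathfrak{w}^{(\vect{n})}_{\vect{i}}\,P(\vect{z}^{(\vect{n})}_{\vect{i}})=\sum_{\indexvectgamma}c_{\indexvectgamma}\tint\dchi^{(\vect{n})}_{\indexvectgamma}\,\mathrm{d}\rule{1pt}{0pt}\omega^{(\vect{n})}.\]

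The crucial step is then Proposition \ref{v2:1507211320}: each integral $\tint\dchi^{(\vect{n})}_{\indexvectgamma}\,\mathrm{d}\rule{1pt}{0pt}\omega^{(\vect{n})}$ vanishes unless $\vectgamma$ satisfies \eqref{v2:1507201132}, in which case it equals $1$. Hence the right-hand side collapses to $\sum_{\vectgamma\ \text{satisfying}\ \eqref{v2:1507201132}}c_{\indexvectgamma}$. Finally I would note that $\vect{0}$ itself satisfies \eqref{v2:1507201132} (take $\vect{h}=\vect{0}$, so that the sum of the $h_{\mathsf{i}}$ equals $0\in 2\mathbb{N}_0$), whereas every other surviving index $\vectgamma\neq\vect{0}$ has $c_{\indexvectgamma}=0$ by hypothesis. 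Thus the sum reduces to the single term $c_{\vect{0}}$, which matches the left-hand side, and \eqref{v2:1508220012} follows.

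This is essentially a bookkeeping argument resting entirely on Proposition \ref{v2:1507211320} and the correspondence \eqref{v2:1508201411}, so I do not expect a genuine obstacle. The only two points needing care are the finiteness of the Chebyshev expansion of $P$ (which legitimises interchanging the sum and the integral over $\omega^{(\vect{n})}$) and the observation that $\vect{0}$ is the \emph{unique} index satisfying \eqref{v2:1507201132} that is not annihilated by the orthogonality hypothesis; everything else is immediate from the stated results.
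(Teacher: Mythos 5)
Your proof is correct and follows essentially the same route as the paper: the paper simply phrases the linearity step as ``it suffices to consider the basis polynomials $P=T_{\indexvectgamma}$'' and then applies \eqref{v2:1508201411} together with Proposition \ref{v2:1507211320}, which is exactly the bookkeeping you carry out explicitly via the coefficient expansion.
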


\begin{proof}
It suffices to consider the basis polynomials $P=T_{\indexvectgamma}$. 
For the left hand side of \eqref{v2:1508220012} we obtain $1$ if $\vectgamma = \vect{0}$ and zero otherwise. By \eqref{v2:1508201411} and Proposition \ref{v2:1507211320}, the
right hand side of \eqref{v2:1508220012} is $1$ if condition \eqref{v2:1507201132} is satisfied and zero otherwise. 
\end{proof}

\begin{remark}
For $\mathsf{d} = 1$, the formulas \eqref{v2:1508211556} and \eqref{v2:1508220012} correspond to the formulas for univariate Chebyshev-Gauß-Lobatto interpolation and quadrature on the interval $[-1,1]$, cf. \cite[Section 3.4]{Shen2011}. 
For $\mathsf{d} = 2$, these formulas were first proven for the Padua points \cite{BosDeMarchiVianelloXu2006} and later on extended in \cite{Erb2015} to general two-dimensional degenerate Lissajous curves.
The described approach for the efficient computation of the coefficients $c_{\indexvectgamma}(f)$ using fast Fourier methods
was originally developed in \cite{CaliariDeMarchiSommarivaVianello2011} for the Padua points and later on extended in \cite{ErbKaethnerAhlborgBuzug2015,ErbKaethnerDenckerAhlborg2015} for general two-dimensional Lissajous curves. 
\end{remark}

\begin{remark}
The sets $\LD^{(\vect{n})}_{\vect{u}}$, $\vect{u} \in \{-1,1\}^{\mathsf{d}}$, in \eqref{v2:1509171731} and the corresponding generating curves $\vect{\ell}^{(\vect{n})}_{\vect{0},\vect{u}}$ are reflected versions of the sets $\LC^{(\vect{n})} = \LD^{(\vect{n})}_{\vect{1}}$ and the curve $\vect{\ell}^{(\vect{n})}_{\vect{0},\vect{1}}$, respectively. Therefore, all results of this Section \ref{v2:15009022044} can be formulated and proven analogously also for the node sets $\LD^{(\vect{n})}_{\vect{u}}$.
\end{remark}

\begin{remark}
The sets $\LC^{(\vect{n})}$ can be considered as particular $\mathsf{d}$-dimensional Chebyshev lattices of rank one, see 
\cite{CoolsPoppe2011,PoppeCools2012,PoppeCools2013,PottsVolkmer2015}. 
According to the notation given in \cite{CoolsPoppe2011,PoppeCools2013}, the lattice parameters of the set $\LC^{(\vect{n})}$ are given by 
$d_1 =  \p[\vect{n}]$, $\mathbf{z}_1 = (\p_1[\vect{n}], \ldots, \p_{\mathsf{d}}[\vect{n}])$
and $\mathbf{z}_{\Delta} = \vect{0}$. For general Chebyshev lattices, interpolation is usually only considered in an approximative way, known as hyperinterpolation, see \cite{Sloan1995}. 
For $\LC^{(\vect{n})}$, Theorem \ref{v2:1509011746} shows that more is possible, namely unique interpolation in 
$\Pi^{(\vect{n})}$. 
\end{remark}


\section[Polynomial interpolation on \texorpdfstring{${\protect\underbar{\text{LC}}}^{(2 \protect\underbar{\scriptsize $\boldsymbol{n}$})}_{\protect\underbar{\scriptsize $\boldsymbol{\kappa}$}}$}{LC2nkappa}]
{Polynomial interpolation on ${\protect\underbar{\text{LC}}}^{(\text{\small $2$} \protect\underbar{\small $\boldsymbol{n}$})}_{\protect\underbar{\small $\boldsymbol{\kappa}$}}$} \label{v2:201510121546}
\subsection{The node sets} 

Again, we recall the  \underline{general assumption \eqref{v2:1509151442} on $\vect{n}\in\mathbb{N}^{\mathsf{d}}$}. For $\vect{\kappa}\in\mathbb{Z}^{\mathsf{d}}$, we define
\[\begin{split}
\I^{(2\vect{n})}_{\vect{\kappa}}&=\I^{(2\vect{n})}_{\vect{\kappa},0}\cup \I^{(2\vect{n})}_{\vect{\kappa},1},\ \text{with the sets $\I^{(2\vect{n})}_{\vect{\kappa},\mathfrak{r}}$ , $\mathfrak{r}\in\{0,1\}$, given by}\\
\I^{(2\vect{n})}_{\vect{\kappa},\mathfrak{r}}&=\left\{\,\vect{i}\in\mathbb{N}_0\,\left|\,\forall\,\mathsf{j}\in\{1,\ldots,\mathsf{d}\}:\ 0\leq i_{\mathsf{j}}\leq 2n_{\mathsf{j}}\ \ \text{and}\ \ i_{\mathsf{j}}\equiv \kappa_{\mathsf{j}}+\mathfrak{r} \tmod 2\right.\,\right\}.
\end{split}\]
From the particular structure of $\I^{(2\vect{n})}_{\vect{\kappa},\mathfrak{r}}$ as cross product of sets and the fact that $\I^{(2\vect{n})}_{\vect{\kappa},0}\cap \I^{(2\vect{n})}_{\vect{\kappa},1}=\emptyset$, we immediately obtain the cardinalities
\begin{equation}\label{v2:1505030732}\#\I^{(2\vect{n})}_{\vect{\kappa}}=\#\I^{(2\vect{n})}_{\vect{\kappa},0}+\#\I^{(2\vect{n})}_{\vect{\kappa},1},\quad 
\# \I_{\vect{\kappa},\mathfrak{r}}^{(2\vect{n})}=\tprod_{\substack{\mathsf{i}\in \{1,\ldots,\mathsf{d}\}:\\ \kappa_{\mathsf{i}}\equiv \mathfrak{r}\tmod 2}}(n_{\mathsf{i}}+1)\ \times\!\!\!\!\tprod_{\substack{\mathsf{i}\in \{1,\ldots,\mathsf{d}\}:\\ \kappa_{\mathsf{i}}\not\equiv\mathfrak{r}\tmod 2}}n_{\mathsf{i}}.
\end{equation}
\text{Note that in the special case $\vect{\kappa}=\vect{0}$ we can write $\#\I_{\vect{0},0}^{(2\vect{n})}=\p[\vect{n}+\vect{1}]$, $\#\I_{\vect{0},0}^{(2\vect{n})}=\p[\vect{n}]$.}\\
Using the definition \eqref{v2:201510121326}
of the Chebyshev-Gauß-Lobatto points, we introduce another type of {\it Lissajous-Chebyshev node sets}:
\[\LC^{(2\vect{n})}_{\vect{\kappa}} = \LC^{(2\vect{n})}_{\vect{\kappa},0}\cup \LC^{(2\vect{n})}_{\vect{\kappa},1},\quad
\LC^{(2\vect{n})}_{\vect{\kappa},\mathfrak{r}}=\left\{\, \vect{z}^{(2\vect{n})}_{\vect{i}}\,\left|\,\vect{i}\in \I^{(2\vect{n})}_{\vect{\kappa},\mathfrak{r}} \right.\right\}.\]
We have the following properties. 
The sets  $\LC^{(2\vect{n})}_{\vect{\kappa},0}$ and  $\LC^{(2\vect{n})}_{\vect{\kappa},1}$ are disjoint and the mapping $\vect{i}\mapsto \vect{z}^{(2\vect{n})}_{\vect{i}}$ is a bijection  from $\I^{(2\vect{n})}_{\vect{\kappa},\mathfrak{r}}$ onto $\LC^{(2\vect{n})}_{\vect{\kappa},\mathfrak{r}}$, $\mathfrak{r}\in\{0,1\}$. In particular, it is a bijection from  $\I^{(2\vect{n})}_{\vect{\kappa}}$ onto $\LC^{(2\vect{n})}_{\vect{\kappa}}$. Using the values from \eqref{v2:1505030732}, we have
\[\#\LC^{(2\vect{n})}_{\vect{\kappa}}=\#\I^{(2\vect{n})}_{\vect{\kappa}},\quad \#\LC^{(2\vect{n})}_{\vect{\kappa},\mathfrak{r}}=\#\I^{(2\vect{n})}_{\vect{\kappa},\mathfrak{r}}.\]

Illustrations of the sets $\LC^{(2\vect{n})}_{\vect{\kappa},0}$ for the dimensions $\mathsf{d} = 2$ and $\mathsf{d} = 3$ are given
in Figure \ref{v2:fig:lissajous2} and \ref{v2:fig:lissajous3}, respectively. For $\mathsf{M}\subseteq\{1,\ldots,\mathsf{d}\}$, we further introduce the sets
\[\I^{(2\vect{n})}_{\vect{\kappa},\mathsf{M}}=\I^{(2\vect{n})}_{\vect{\kappa},\mathsf{M},0}\cup \I^{(2\vect{n})}_{\vect{\kappa},\mathsf{M},1},\quad 
\I^{(2\vect{n})}_{\vect{\kappa},\mathsf{M},\mathfrak{r}}=\left\{\,\left.\vect{i}\in \I^{(2\vect{n})}_{\vect{\kappa},\mathfrak{r}}\,\right|\,0<i_{\mathsf{j}}<2n_{\mathsf{j}}\Leftrightarrow\mathsf{j}\in \mathsf{M}\,\right\}\]
and
\[\LC^{(2\vect{n})}_{\vect{\kappa},\mathsf{M}} =\LC^{(2\vect{n})}_{\vect{\kappa},\mathsf{M},0}\cup\LC^{(2\vect{n})}_{\vect{\kappa},\mathsf{M},1},\quad \LC^{(2\vect{n})}_{\vect{\kappa},\mathsf{M},\mathfrak{r}}=\left\{\, \vect{z}^{(2\vect{n})}_{\vect{\kappa},\vect{i}}\,\left|\,\vect{i}\in \I^{(2\vect{n})}_{\vect{\kappa},\mathsf{M},\mathfrak{r}} \right.\right\}.\]
Clearly,
$\LC^{(2\vect{n})}_{\vect{\kappa},\mathsf{M}}=\LC^{(2\vect{n})}_{\vect{\kappa}}\cap \vect{F}^{\mathsf{d}}_{\mathsf{M}}$ and 
$\LC^{(2\vect{n})}_{\vect{\kappa},\mathsf{M},\mathfrak{r}}=\LC^{(2\vect{n})}_{\vect{\kappa},\mathfrak{r}}\cap \vect{F}^{\mathsf{d}}_{\mathsf{M}}$. 

\medskip

Similar as Proposition \ref{v2:1509021542} in the last section, the following Proposition \ref{v2:1509021556} plays an important role in the proofs of the upcoming results. 
In the present case, we obtain an identification of the set $\I^{(2\vect{n})}_{\vect{\kappa}}$ with a particular class decomposition of
\[\vect{H}^{(2\vect{n})}=\{0,\ldots,4\p[\vect{n}]-1\}\times  \{0,1\}^{\mathsf{d}-1}.\]
The deeper importance of this result gets apparent later in Proposition \ref{v2:1508311218}.

\begin{proposition}\label{v2:1509021556}
We assume that
\begin{equation}\label{v2:1508071340}
\text{$\mathsf{g}\in\{1,\ldots,\mathsf{d}\}$ and that $n_{\mathsf{i}}$ is odd for all  $\mathsf{i}\in\{1,\ldots,\mathsf{d}\}\setminus\{\mathsf{g}\}$.}
\end{equation}

a) For all\, $(l,\rho_1,\ldots,\rho_{\mathsf{g}-1},\rho_{\mathsf{g}+1},\ldots,\rho_{\mathsf{d}})\in \vect{H}^{(2\vect{n})}$, there exists a uniquely determined  element  $\vect{i}\in \I^{(2\vect{n})}_{\vect{\kappa}}$ and a  (not necessarily  unique) $\vect{v}\in\{-1,1\}^{\mathsf{d}}$ such that 
\begin{equation}\label{v2:15082921}
\begin{array}{rll}
 i_{\mathsf{g}}&\hspace{-5pt} \equiv v_{\mathsf{g}}\left(l-\kappa_{\mathsf{g}}\right) &\mod 4n_{\mathsf{g}},\\
\forall\,\mathsf{i}\ \text{with}\ \mathsf{i}\neq\mathsf{g}:\quad i_{\mathsf{i}}&\!\!\!\!\;\equiv v_{\mathsf{i}}\left(l+2\rho_{\mathsf{i}}n_{\mathsf{i}}-\kappa_{\mathsf{i}}\right) &\mod 4n_{\mathsf{i}}.
\end{array}
\end{equation}
Therefore, a function $\vect{i}^{(2\vect{n})}_{\vect{\kappa}}:\,\vect{H}^{(2\vect{n})}\to\I^{(2\vect{n})}_{\vect{\kappa}}$ is well defined by
\begin{equation}\label{v2:1509022008}
\vect{i}^{(2\vect{n})}_{\vect{\kappa}}(l,\rho_1,\ldots,\rho_{\mathsf{g}-1},\rho_{\mathsf{g}+1},\ldots,\rho_{\mathsf{d}})=\vect{i}.
\end{equation}

b) We have $\vect{i}^{(2\vect{n})}_{\vect{\kappa}}(l,\rho_1,\ldots,\rho_{\mathsf{g}-1},\rho_{\mathsf{g}+1},\ldots,\rho_{\mathsf{d}})\in \I^{(2\vect{n})}_{\vect{\kappa},\mathfrak{r}}$ if and only if $l\equiv\mathfrak{r}\tmod 2$.

c) Let $\mathsf{M}\subseteq \{1,\ldots,\mathsf{d}\}$. If  $\vect{i}\in \I^{(2\vect{n})}_{\vect{\kappa},\mathsf{M}}$, then $\#\{\,\vect{h}\in \vect{H}^{(2\vect{n})}\,|\,\vect{i}^{(2\vect{n})}_{\vect{\kappa}}(\vect{h})=\vect{i}\,\}=2^{\#\mathsf{M}}$.
\end{proposition}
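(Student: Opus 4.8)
The plan is to follow the template of Proposition~\ref{v2:1509021542} closely, treating a) and b) together and reserving the real work for c). Throughout, the governing tool is the generalized Chinese remainder theorem \eqref{v2:1503211222}--\eqref{v2:1505032140}; the essential new difficulty is that the relevant moduli $4n_1,\dots,4n_{\mathsf{d}}$ are no longer pairwise coprime but share the common factor $4$. The distinguished index $\mathsf{g}$ and the binary shift parameters $\rho_{\mathsf{i}}$ built into $\vect{H}^{(2\vect{n})}$ exist precisely to repair the resulting incompatibility modulo $4$, and assumption \eqref{v2:1508071340} (all $n_{\mathsf{i}}$ with $\mathsf{i}\neq\mathsf{g}$ odd) is what makes these repairs possible.

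For a) and b) I would fix $\vect{h}=(l,\rho_1,\dots,\rho_{\mathsf{g}-1},\rho_{\mathsf{g}+1},\dots,\rho_{\mathsf{d}})\in\vect{H}^{(2\vect{n})}$ and build $\vect{i}$ coordinatewise. For each $\mathsf{j}$ let $a_{\mathsf{j}}$ be the residue modulo $4n_{\mathsf{j}}$ of the linear form appearing in \eqref{v2:15082921} (namely $l-\kappa_{\mathsf{g}}$ for $\mathsf{j}=\mathsf{g}$, and $l+2\rho_{\mathsf{j}}n_{\mathsf{j}}-\kappa_{\mathsf{j}}$ otherwise). Since $\{0,\dots,2n_{\mathsf{j}}\}$ together with its negatives covers $\mathbb{Z}/4n_{\mathsf{j}}\mathbb{Z}$, there is a unique $i_{\mathsf{j}}\in\{0,\dots,2n_{\mathsf{j}}\}$ with $i_{\mathsf{j}}\equiv\pm a_{\mathsf{j}}\tmod 4n_{\mathsf{j}}$ and a realizing sign $v_{\mathsf{j}}$ (the sign being free exactly when $i_{\mathsf{j}}\in\{0,2n_{\mathsf{j}}\}$); this forces uniqueness of $\vect{i}$. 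Reducing the defining congruences modulo $2$ gives $i_{\mathsf{j}}\equiv\kappa_{\mathsf{j}}+l\tmod 2$ for every $\mathsf{j}$ (each $v_{\mathsf{j}}$ is odd and $2\rho_{\mathsf{j}}n_{\mathsf{j}}$ is even), so all differences $i_{\mathsf{j}}-\kappa_{\mathsf{j}}$ share the parity of $l$; hence $\vect{i}\in\I^{(2\vect{n})}_{\vect{\kappa},\mathfrak{r}}$ with $\mathfrak{r}\equiv l\tmod 2$, which is simultaneously a) and b).

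For c) I would, for fixed $\vect{i}\in\I^{(2\vect{n})}_{\vect{\kappa},\mathsf{M}}$, construct a map $\vect{v}\mapsto\vect{h}(\vect{v})$ from $\{-1,1\}^{\mathsf{d}}$ to $\vect{H}^{(2\vect{n})}$ and identify the fibre $\{\vect{h}:\vect{i}^{(2\vect{n})}_{\vect{\kappa}}(\vect{h})=\vect{i}\}$ with its image. The key reformulation is that, given $\vect{v}$, the system \eqref{v2:15082921} is equivalent to $l\equiv\kappa_{\mathsf{g}}+v_{\mathsf{g}}i_{\mathsf{g}}\tmod 4n_{\mathsf{g}}$ together with $l\equiv\kappa_{\mathsf{i}}+v_{\mathsf{i}}i_{\mathsf{i}}\tmod 2n_{\mathsf{i}}$ for $\mathsf{i}\neq\mathsf{g}$, where each $\rho_{\mathsf{i}}\in\{0,1\}$ is then uniquely chosen to recover the congruence modulo $4n_{\mathsf{i}}$. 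These residues agree modulo $2$, the moduli have pairwise gcd $2$ and lcm $4\p[\vect{n}]$ (here \eqref{v2:1508071340} is used), so the theorem yields a unique $l\in\{0,\dots,4\p[\vect{n}]-1\}$ and hence a well-defined $\vect{h}(\vect{v})$ whose image is the fibre. I would then prove that $\vect{h}(\vect{v})=\vect{h}(\vect{v}')$ if and only if $v_{\mathsf{j}}=v'_{\mathsf{j}}$ for all $\mathsf{j}\in\mathsf{M}$, which gives the count $2^{\#\mathsf{M}}$. For a boundary coordinate $\mathsf{j}\notin\mathsf{M}$ one has $i_{\mathsf{j}}\in\{0,2n_{\mathsf{j}}\}$, so $v_{\mathsf{j}}i_{\mathsf{j}}$ is sign-independent modulo the relevant modulus and $\vect{h}(\vect{v})$ does not depend on $v_{\mathsf{j}}$; for an interior coordinate I would recover $v_{\mathsf{j}}$ from the quantity $w_{\mathsf{j}}(\vect{h})$, equal to $l\bmod 4n_{\mathsf{g}}$ when $\mathsf{j}=\mathsf{g}$ and to $(l+2\rho_{\mathsf{j}}n_{\mathsf{j}})\bmod 4n_{\mathsf{j}}$ otherwise, since $w_{\mathsf{j}}\equiv\kappa_{\mathsf{j}}+v_{\mathsf{j}}i_{\mathsf{j}}\tmod 4n_{\mathsf{j}}$ separates $v_{\mathsf{j}}=1$ from $v_{\mathsf{j}}=-1$ whenever $0<i_{\mathsf{j}}<2n_{\mathsf{j}}$.

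The main obstacle is exactly this last separation at interior coordinates $\mathsf{i}\neq\mathsf{g}$. Modulo $2n_{\mathsf{i}}$ the value $v_{\mathsf{i}}i_{\mathsf{i}}$ fails to detect a sign flip in the critical case $i_{\mathsf{i}}=n_{\mathsf{i}}$, where $+n_{\mathsf{i}}\equiv -n_{\mathsf{i}}\tmod 2n_{\mathsf{i}}$, so $l$ alone cannot distinguish $\vect{v}$ from the flipped vector. The resolution, and the reason the parameters $\rho_{\mathsf{i}}$ are part of $\vect{H}^{(2\vect{n})}$, is that the sign is recorded jointly in $(l,\rho_{\mathsf{i}})$, i.e.\ in $w_{\mathsf{i}}$ modulo $4n_{\mathsf{i}}$, where $+n_{\mathsf{i}}$ and $-n_{\mathsf{i}}\equiv 3n_{\mathsf{i}}\tmod 4n_{\mathsf{i}}$ are genuinely distinct. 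Keeping $l$ and $\rho$ coupled throughout part c), rather than arguing with $l$ in isolation, is the one place where care is genuinely needed.
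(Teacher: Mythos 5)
Your proposal is correct and follows essentially the same route as the paper: parts a) and b) by the coordinatewise sign-representative construction inherited from Proposition \ref{v2:1509021542}, and part c) by building the map $\vect{v}\mapsto\vect{h}(\vect{v})$ via the Chinese remainder theorem with moduli $4n_{\mathsf{g}}$ and $2n_{\mathsf{i}}$ ($\mathsf{i}\neq\mathsf{g}$) and counting when two sign vectors collide. Your explicit discussion of the critical case $i_{\mathsf{i}}=n_{\mathsf{i}}$, where the sign is only recoverable from the pair $(l,\rho_{\mathsf{i}})$ modulo $4n_{\mathsf{i}}$, makes transparent the point the paper compresses into the single condition $(v'_{\mathsf{i}}-v_{\mathsf{i}})i_{\mathsf{i}}\equiv 0\tmod 4n_{\mathsf{i}}$.
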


 \begin{figure}[htb]
	\centering
	\subfigure[\hspace*{0.5em} \text{$\LC^{(10,6)}_{\vect{0}}$ and \, $\vect{\ell}^{(10,6)}_{\vect{0},\vect{1}}({[0,\pi]}) \cup \vect{\ell}^{(10,6)}_{\vect{0},(1,-1)}({[0,\pi]})$}
	]{\includegraphics[scale=0.8]{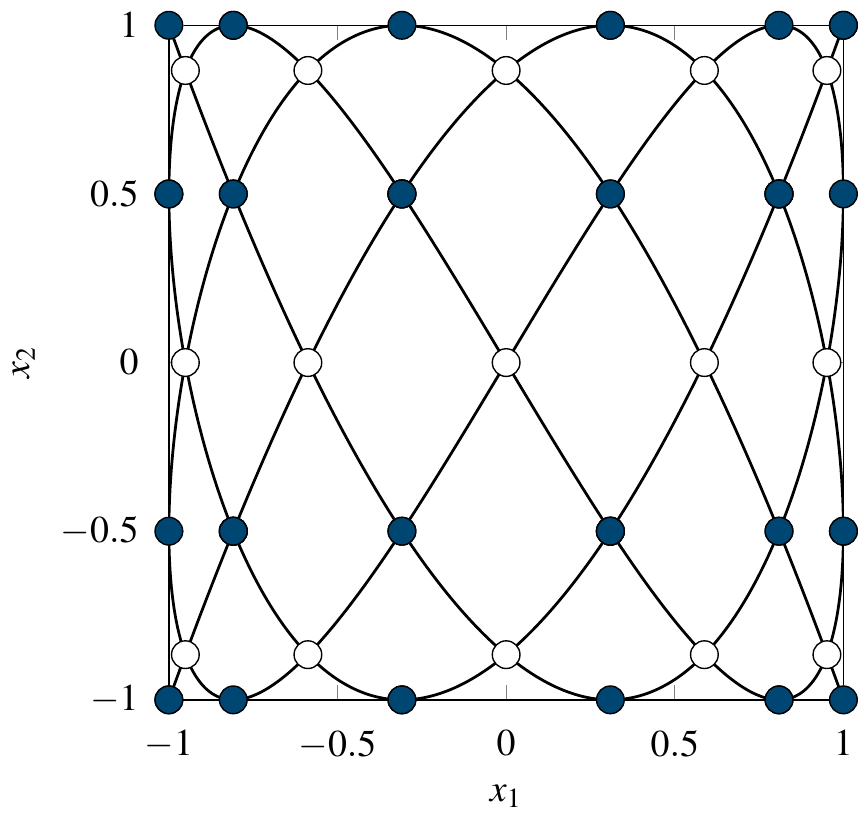}}
	\hfill	
	\subfigure[\hspace*{0.5em} $\LC^{(10,6)}_{(0,1)}$ and \;\!\! $\mathcal{C}^{(10,6)}_{(0,1)} = \vect{\ell}^{(10,6)}_{(0,1),\vect{1}}({[0,2\pi)})$
	]{\includegraphics[scale=0.8]{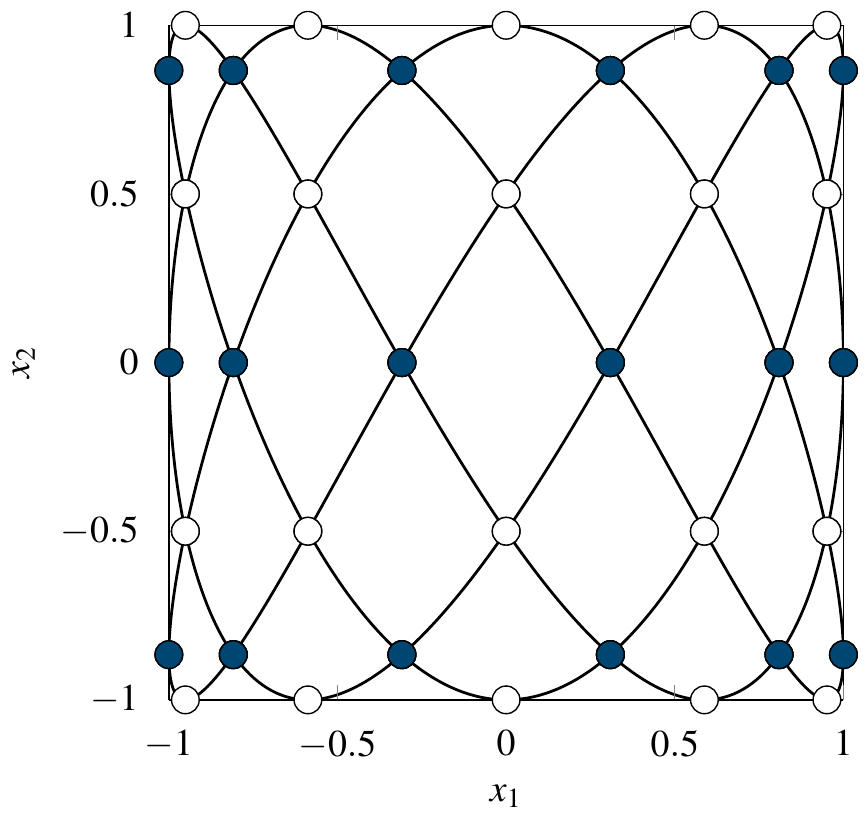}} 
  	\caption{Illustration of the set $\LC^{(2 \vect{n})}_{\vect{\kappa}}$ and the Chebyshev variety $\mathcal{C}^{(2\vect{n})}_{\vect{\kappa}}$ for 
  	$\mathsf{d} = 2$. The subsets $\LC^{(2\vect{n})}_{\vect{\kappa},0}$ and $\LC^{(2\vect{n})}_{\vect{\kappa},1}$ are colored in blue and 
  	white, respectively. 
  	}
	\label{v2:fig:lissajous2}
\end{figure}

\begin{remark} We can always find an index $\mathsf{g}$ satisfying \eqref{v2:1508071340}.
If all entries of $\vect{n}$ are odd, then 
\eqref{v2:1508071340} is satisfied for all $\mathsf{g}\in\{1,\ldots,\mathsf{d}\}$. On the other hand, if there is a (necessarily unique) even entry of 
$\vect{n}$, then $\mathsf{g}$ is the index of the even entry $n_{\mathsf{g}}$.
\end{remark}

\begin{proof} The statements a) and b) are shown in the same way as in the proof of Proposition \ref{v2:1509021542}. 
We turn to statement c) and denote $k_{\mathsf{g}}=4n_{\mathsf{g}}$ and  $k_{\mathsf{i}}=2n_{\mathsf{i}}$ for $\mathsf{i}\neq\mathsf{g}$. If $\vect{i}\in\I^{(2\vect{n})}_{\vect{\kappa}}$ and $\vect{v}\in \{-1,1\}^{\mathsf{d}}$, then for $a_{\mathsf{i}}=v_{\mathsf{i}}i_{\mathsf{i}}+\kappa_{\mathsf{i}}$  condition \eqref{v2:1503211222} is valid 
by the definition of $\I^{(2\vect{n})}_{\vect{\kappa}}$. The Chinese remainder theorem implies the existence of a unique  $l\in \{0,\ldots,4\p[\vect{n}]-1\}$ satisfying
\[\begin{array}{rll}
 v_{\mathsf{g}}i_{\mathsf{g}}+\kappa_{\mathsf{g}}&\!\!\!\equiv l &\mod 4n_{\mathsf{g}},\\
\forall\,\mathsf{i}\ \text{with}\ \mathsf{i}\neq\mathsf{g}:\quad v_{\mathsf{i}}i_{\mathsf{i}}+\rule{1pt}{0pt}\kappa_{\mathsf{i}}\rule{1pt}{0pt}&\!\!\!\equiv l &\mod 2n_{\mathsf{i}}.
\end{array}\]
From this we find  uniquely determined $(\rho_1,\ldots,\rho_{\mathsf{g}-1},\rho_{\mathsf{g}+1},\ldots,\rho_{\mathsf{d}})\in\{-1,1\}^{\mathsf{d}-1}$ such that \eqref{v2:15082921} holds.
Therefore, for each  $\vect{i}\in \I^{(2\vect{n})}_{\vect{\kappa}}$ a function $\vect{h}^{(2\vect{n})}_{\vect{i}}:\,\{-1,1\}^{\mathsf{d}}\to \vect{H}^{(2\vect{n})}$ is well defined by $\vect{h}^{(2\vect{n})}_{\vect{i}}(\vect{v})=(l,\rho_1,\ldots,\rho_{\mathsf{g}-1},\rho_{\mathsf{g}+1},\ldots,\rho_{\mathsf{d}})$ satisfying \eqref{v2:15082921}. We have $\vect{i}^{(2\vect{n})}_{\vect{\kappa}}(\vect{h})=\vect{i}$ if and only if there is a $\vect{v}\in\{-1,1\}^{\mathsf{d}}$ such that $\vect{h}=\vect{h}^{(2\vect{n})}_{\vect{i}}(\vect{v})$.

If $\vect{i}\in \I^{(2\vect{n})}_{\vect{\kappa},\mathsf{M}}$, then $\vect{h}^{(2\vect{n})}_{\vect{i}}(\vect{v}')=\vect{h}^{(2\vect{n})}_{\vect{i}}(\vect{v})$ if and only if
$(v'_{\mathsf{i}}-v_{\mathsf{i}})i_{\mathsf{i}}\equiv 0 \tmod 4n_{\mathsf{i}}$ for all $\mathsf{i}\in \mathsf{M}$, 
i.e. if and only if   $v'_{\mathsf{i}}=v_{\mathsf{i}}$ for all $\mathsf{i}\in\mathsf{M}$.
\end{proof}

\newpage

\begin{theorem}  For $\mathfrak{r}\in \{0,1\}$, we have 
\begin{align} \label{v2:B150915132} \LC^{(2\vect{n})}_{\vect{\kappa},\mathfrak{r}}&
=\left\{\,\vect{\ell}^{(2\vect{n})}_{\vect{\kappa},\vect{u}}(t^{(2\vect{n})}_{l})\,\left|\,\vect{u}\in\{-1,1\}^{\mathsf{d}},\,l\in\{0,\ldots,2\p[\vect{n}]-1\} \! : l\equiv \mathfrak{r}\tmod 2\right.\right\}\\
&\nonumber
=\left\{\,\vect{\ell}^{(2\vect{n})}_{\vect{\kappa},\vect{u}}(t^{(2\vect{n})}_{l})\,\left|\,\vect{u}\in\{-1,1\}^{\mathsf{d}} \! : u_{\mathsf{g}}=1,\,\ l\in\{0,\ldots,4\p[\vect{n}]-1\}\!: l\equiv \mathfrak{r}\tmod 2\right.\right\} \! .
\end{align}
If $\vect{\kappa} = \vect{0}$, we further have the characterizations
\begin{align}
\LC^{(2\vect{n})}_{\vect{0},0} &= \bigcup_{\substack{\vect{u} \in \{-1,1\}^{\mathsf{d}}: \\u_{\mathsf{g}}=1}} \LD^{(\vect{n})}_{\vect{u}},\quad \text{with \eqref{v2:1509171731}}, \label{v2:C150915132A} \\
\LC^{(2\vect{n})}_{\vect{0},1} &= \left\{\,\vect{\ell}^{(2\vect{n})}_{\vect{0},\vect{1}}(t^{(2\vect{n})}_{l})\,\left|\, l\in\{1,\ldots,2 \p[\vect{n}]-1\}\!: l \equiv 1 \tmod 2 \right.\right\}. \label{v2:C150915132B}
\end{align}
The union \eqref{v2:C150915132A} is a union of pairwise disjoint sets. 
\end{theorem}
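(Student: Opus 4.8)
The plan is to reduce every assertion to the single computation that the $\mathsf{i}$-th coordinate of $\vect{\ell}^{(2\vect{n})}_{\vect{\kappa},\vect{u}}(t^{(2\vect{n})}_l)$ equals $u_{\mathsf{i}}\cos\bigl((l-\kappa_{\mathsf{i}})\pi/(2n_{\mathsf{i}})\bigr)$, using $\p_{\mathsf{i}}[\vect{n}]\,t^{(2\vect{n})}_l=l\pi/(2n_{\mathsf{i}})$. Since $l-\kappa_{\mathsf{i}}\in\mathbb{Z}$ and $u_{\mathsf{i}}=\pm1$, this coordinate equals $\cos(j\pi/(2n_{\mathsf{i}}))$ for some integer $j$, hence is a Chebyshev-Gauß-Lobatto coordinate $z^{(2n_{\mathsf{i}})}_{i_{\mathsf{i}}}$ with $i_{\mathsf{i}}\in\{0,\ldots,2n_{\mathsf{i}}\}$; because $2n_{\mathsf{i}}$ is even, reflecting the angle (the effect of $u_{\mathsf{i}}=-1$) does not change the parity of $i_{\mathsf{i}}$, so $i_{\mathsf{i}}\equiv l-\kappa_{\mathsf{i}}\equiv\kappa_{\mathsf{i}}+\mathfrak{r}\pmod 2$ whenever $l\equiv\mathfrak{r}\pmod2$. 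This already shows that both sets on the right of \eqref{v2:B150915132} are contained in $\LC^{(2\vect{n})}_{\vect{\kappa},\mathfrak{r}}$.

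For the reverse inclusion I would invoke Proposition~\ref{v2:1509021556}. Given $\vect{i}\in\I^{(2\vect{n})}_{\vect{\kappa},\mathfrak{r}}$, surjectivity of $\vect{i}^{(2\vect{n})}_{\vect{\kappa}}$ supplies an $(l,\boldsymbol{\rho})\in\vect{H}^{(2\vect{n})}$ and a $\vect{v}$ satisfying \eqref{v2:15082921}. Reading \eqref{v2:15082921} back through the cosine, the factor $v_{\mathsf{i}}$ disappears (cosine is even), the term $2\rho_{\mathsf{i}}n_{\mathsf{i}}$ contributes an angle $\rho_{\mathsf{i}}\pi$, i.e.\ a sign $(-1)^{\rho_{\mathsf{i}}}$, and for $\mathsf{i}=\mathsf{g}$ no such shift is present. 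Setting $u_{\mathsf{g}}=1$ and $u_{\mathsf{i}}=(-1)^{\rho_{\mathsf{i}}}$ for $\mathsf{i}\neq\mathsf{g}$ then gives $\vect{z}^{(2\vect{n})}_{\vect{i}}=\vect{\ell}^{(2\vect{n})}_{\vect{\kappa},\vect{u}}(t^{(2\vect{n})}_l)$ with $u_{\mathsf{g}}=1$, $l\in\{0,\ldots,4\p[\vect{n}]-1\}$, and $l\equiv\mathfrak{r}\pmod2$ by part b); this establishes the second equality. The first equality then follows from the reflection identity $\vect{\ell}^{(2\vect{n})}_{\vect{\kappa},\vect{u}}(t^{(2\vect{n})}_{l+2\p[\vect{n}]})=\vect{\ell}^{(2\vect{n})}_{\vect{\kappa},\vect{u}'}(t^{(2\vect{n})}_l)$ with $u'_{\mathsf{i}}=(-1)^{\p_{\mathsf{i}}[\vect{n}]}u_{\mathsf{i}}$ (the extra angle is $\p_{\mathsf{i}}[\vect{n}]\pi$): since every sign vector is admissible in the first form, one may replace $l$ by its residue modulo $2\p[\vect{n}]$ while merely relabelling $\vect{u}$, folding $\{0,\ldots,4\p[\vect{n}]-1\}$ into $\{0,\ldots,2\p[\vect{n}]-1\}$ and preserving the parity of $l$. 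Chaining the three inclusions shows all three sets coincide.

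For $\vect{\kappa}=\vect{0}$ I would use $\vect{\ell}^{(\vect{n})}_{\vect{0},\vect{u}}=\vect{\ell}^{(2\vect{n})}_{\vect{0},\vect{u}}$ and $t^{(\vect{n})}_l=t^{(2\vect{n})}_{2l}$ to rewrite $\LD^{(\vect{n})}_{\vect{u}}$ as the evaluations at even indices $l\in\{0,2,\ldots,2\p[\vect{n}]\}$. As the standard form $\vect{\ell}^{(2\vect{n})}_{\vect{0},\vect{u}}$ is even and $2\pi$-periodic, it satisfies $\vect{\ell}^{(2\vect{n})}_{\vect{0},\vect{u}}(t^{(2\vect{n})}_l)=\vect{\ell}^{(2\vect{n})}_{\vect{0},\vect{u}}(t^{(2\vect{n})}_{4\p[\vect{n}]-l})$ (the double-traversal noted after Theorem~\ref{v2:1503211414}); this folds the even indices of $\{0,\ldots,4\p[\vect{n}]-1\}$ onto $\{0,\ldots,2\p[\vect{n}]\}$ and, combined with the refined form of \eqref{v2:B150915132} at $\mathfrak{r}=0$, yields \eqref{v2:C150915132A}. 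For \eqref{v2:C150915132B} I would argue directly: every $\vect{i}\in\I^{(2\vect{n})}_{\vect{0},1}$ has all $i_{\mathsf{j}}$ odd, so one can pick signs $v_{\mathsf{j}}$ with $v_{\mathsf{j}}i_{\mathsf{j}}\equiv1\pmod4$; since the moduli $4n_{\mathsf{j}}$ are pairwise of greatest common divisor $4$, the Chinese remainder theorem produces a single odd $l$ with $l\equiv v_{\mathsf{j}}i_{\mathsf{j}}\pmod{4n_{\mathsf{j}}}$, whence $\vect{\ell}^{(2\vect{n})}_{\vect{0},\vect{1}}(t^{(2\vect{n})}_l)=\vect{z}^{(2\vect{n})}_{\vect{i}}$ already with $\vect{u}=\vect{1}$, and the reflection $l\mapsto4\p[\vect{n}]-l$ moves $l$ into $\{1,\ldots,2\p[\vect{n}]-1\}$.

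Finally, the disjointness of \eqref{v2:C150915132A} is a counting argument: by \eqref{v2:1505031648} each $\LD^{(\vect{n})}_{\vect{u}}$ has $2^{-(\mathsf{d}-1)}\p[\vect{n}+\vect{1}]$ elements, there are $2^{\mathsf{d}-1}$ admissible $\vect{u}$ with $u_{\mathsf{g}}=1$, and their cardinalities sum to $\p[\vect{n}+\vect{1}]=\#\I^{(2\vect{n})}_{\vect{0},0}=\#\LC^{(2\vect{n})}_{\vect{0},0}$, which is exactly the cardinality of the union; hence the union must be disjoint. I expect the main obstacle to be the bookkeeping in the reverse inclusion, namely translating the congruence data $(\vect{v},\boldsymbol{\rho})$ of Proposition~\ref{v2:1509021556} into the sign vector $\vect{u}$ — in particular the asymmetric role of the distinguished index $\mathsf{g}$, which forces $u_{\mathsf{g}}=1$ in the refined form — together with keeping track of parities across the two range-folding symmetries $l\mapsto l+2\p[\vect{n}]$ (paired with a sign flip) and $l\mapsto4\p[\vect{n}]-l$ (coming from degeneracy).
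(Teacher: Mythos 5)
Your proposal is correct and follows essentially the same route as the paper: the identity \eqref{v2:B150915132} via Proposition \ref{v2:1509021556} (translating the data $(\vect{v},\boldsymbol{\rho})$ into the sign vector $\vect{u}$ with $u_{\mathsf{g}}=1$), the folding $l\mapsto l\pm 2\p[\vect{n}]$ with a sign flip at the distinguished index $\mathsf{g}$, the substitution $t^{(\vect{n})}_{l}=t^{(2\vect{n})}_{2l}$ for \eqref{v2:C150915132A}, a direct Chinese-remainder construction for \eqref{v2:C150915132B}, and a cardinality comparison for disjointness. The only differences are cosmetic, e.g.\ you normalize $v_{\mathsf{j}}i_{\mathsf{j}}\equiv 1 \tmod 4$ where the paper uses $v_{\mathsf{i}}i_{\mathsf{i}}\equiv i_1 \tmod 4$, and you spell out the two inclusions that the paper compresses into one appeal to the proposition.
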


\renewcommand{\thesubfigure}{}
 \begin{figure}[htb]
	\centering
	\subfigure[\textrm{(a)}\hspace*{1em} \text{$\LC^{(3,1,2)}$ and \, \!\!\! $\vect{\ell}^{(6,2,4)}_{\vect{0},\vect{1}}({[0,\pi]})$}
	]{\includegraphics[scale=0.8]{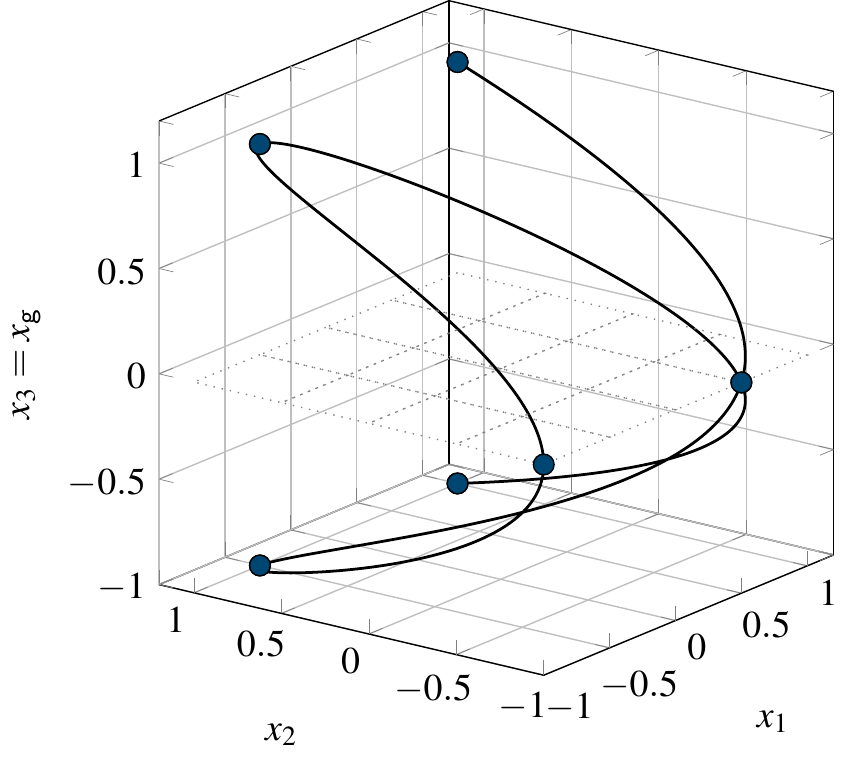}}
	\hfill	
	\subfigure[\hspace*{-4em}\textrm{(b)} \hspace*{1em}\text{$\LC^{(6,2,4)}_{\vect{0}}$ and  $\mathcal{C}^{(6,2,4)}_{\vect{0}} = \hspace{-1.1em} \displaystyle \bigcup_{\vect{u} \in \{-1,1\}^3:\, u_3 = 1} \hspace{-1.2em}
	\vect{\ell}^{(6,2,4)}_{\vect{0},\vect{u}}({[0,\pi]})$}
	]{\includegraphics[scale=0.8]{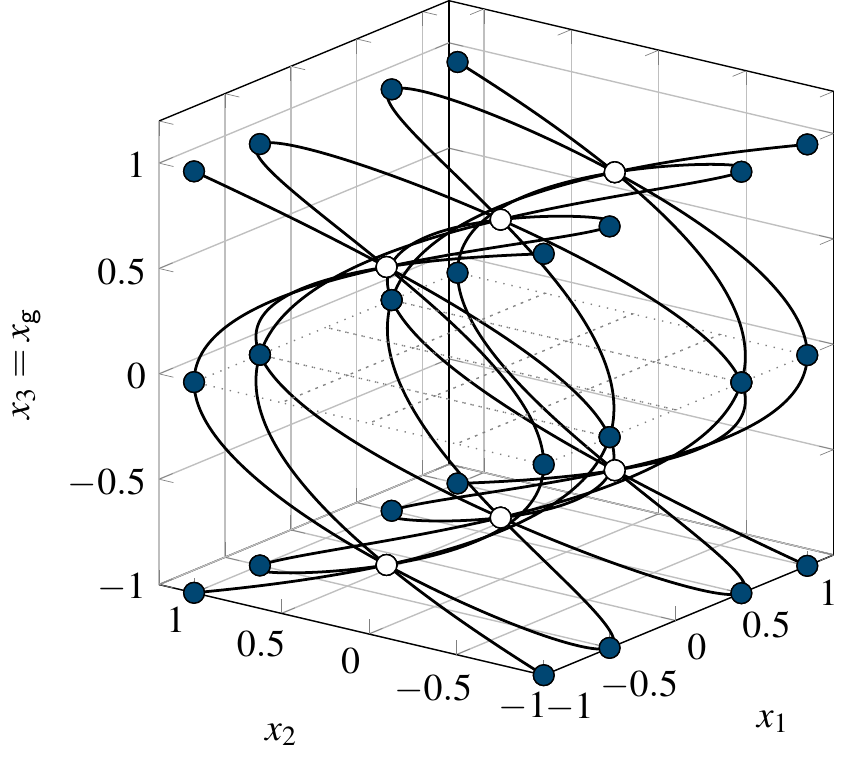}} 
  	\caption{Illustration of the set $\LC^{(2 \vect{n})}_{\vect{0}}$ and the Chebyshev variety $\mathcal{C}^{(2\vect{n})}_{\vect{0}}$ for
  	$\mathsf{d} = 3$. In (b), the subsets $\LC^{(2\vect{n})}_{\vect{0},0}$ and $\LC^{(2\vect{n})}_{\vect{0},1}$ are colored in blue and 
  	white, respectively. 
  	}
	\label{v2:fig:lissajous3}
\end{figure}
\renewcommand{\thesubfigure}{(\alph{subfigure})}

\medskip

\begin{proof} Note that $\p_{\mathsf{g}}[\vect{n}]$ is odd.
If $l'=l+2\p[\vect{n}]$ or $l'=l-2\p[\vect{n}]$, then we have $\vect{\ell}^{(2\vect{n})}_{\vect{\kappa},\vect{u}}(t^{(2\vect{n})}_{l'})=\vect{\ell}^{(2\vect{n})}_{\vect{\kappa},\vect{u}'}(t^{(2\vect{n})}_{l})$ with some $\vect{u}'\in\{-1,1\}^{\mathsf{d}}$ satisfying $u'_{\mathsf{g}}=-u_{\mathsf{g}}$. Therefore,  the  sets on the right hand side of \eqref{v2:B150915132} are equal.
If \eqref{v2:1509022008} holds, then $\vect{\ell}^{(2\vect{n})}_{\vect{\kappa},\vect{u}}(t^{(2\vect{n})}_{l})=\vect{z}^{(2\vect{n})}_{\vect{i}}$ with $u_{\mathsf{i}}=(-1)^{\rho_{\mathsf{i}}}$ for $\mathsf{i}\neq \mathsf{g}$ and  $u_{\mathsf{g}}=1$. Now, Proposition \ref{v2:1509021556} yields the identity \eqref{v2:B150915132}.

We have the identity $\vect{\ell}^{(2\vect{n})}_{\vect{0},\vect{u}}(t^{(2\vect{n})}_{2l}) = \vect{\ell}^{(\vect{n})}_{\vect{0},\vect{u}}(t^{(\vect{n})}_{l})$. Therefore, the characterization \eqref{v2:B150915132} together with the definition \eqref{v2:1509171731} of the sets $\LD^{(\vect{n})}_{\vect{u}}$ gives \eqref{v2:C150915132A}. 

Let $\vect{i} \in \I^{(2\vect{n})}_{\vect{0},1}$. There are
$j_{\mathsf{i}}$ such that
$i_{\mathsf{i}}=2j_{\mathsf{i}}+1$ for all $\mathsf{i}$. Set
$v_{\mathsf{i}}=1$ if $j_{\mathsf{i}}-j_1$ is even, and
$v_{\mathsf{i}}=-1$ otherwise. If $v_{\mathsf{i}}=1$, we have
$v_{\mathsf{i}}i_{\mathsf{i}}-i_1=2(j_{\mathsf{i}}-j_1)$, and if
$v_{\mathsf{i}}=-1$, we have
$v_{\mathsf{i}}i_{\mathsf{i}}-i_1=-2(j_{\mathsf{i}}-j_1+1)-4j_1$.
Therefore, in any case $v_{\mathsf{i}}i_{\mathsf{i}}-i_1$ is a multiple
of $4$.  Thus, for $a_{\mathsf{i}}=v_{\mathsf{i}}i_{\mathsf{i}}$ and
$k_{\mathsf{i}}=4n_{\mathsf{i}}$ condition \eqref{v2:1503211222} is valid.
The Chinese remainder theorem implies the existence of an $l\in
\{0,\ldots,4\p[\vect{n}]-1\}$ satisfying \eqref{v2:15082921} with
$\rho_{\mathsf{i}}=0$ for all $\mathsf{i}\neq \mathsf{g}$ and
$\vect{\kappa}=\vect{0}$. If $l<2\p[\vect{n}]$, we set $l'=l$, and
$l'=4\p[\vect{n}]-l$ otherwise.
Then, we have
$\vect{z}^{(2\vect{n})}_{\vect{i}}=\vect{\ell}^{(2\vect{n})}_{\vect{0},\vect{1}}(t^{(2\vect{n})}_{l'})$,
$l'\in \{0,\ldots,2\p[\vect{n}]\}$, and obviously $l'=1\tmod 2$.  Using \eqref{v2:1505030732} for $\vect{\kappa}=\vect{0}$ and the first statement in \eqref{v2:A1505031632}, 
the last statement of the theorem can be seen by a simple counting argument. 
\end{proof}

\begin{remark} \label{v2:201510151703}
The sets $\LC^{(2\vect{n})}_{\vect{\kappa},\mathfrak{r}}$, and hence also $\LC^{(2\vect{n})}_{\vect{\kappa}}$, are invariant under reflections with respect to the hyperplanes $x_{\mathsf{i}}=0$. For the characterization of the set $\LC^{(2\vect{n})}_{\vect{0},1}$, we can replace $\vect{\ell}^{(2\vect{n})}_{\vect{0},\vect{1}}$ in formula \eqref{v2:C150915132B} 
by any other curve $\vect{\ell}^{(2\vect{n})}_{\vect{0},\vect{u}}$, $\vect{u} \in \{-1,1\}^{\mathsf{d}}$. 
Therefore, \eqref{v2:C150915132A}, \eqref{v2:C150915132B} give the following characterizations. A point in $\LC^{(2\vect{n})}_{\vect{0},\mathsf{M}}$ with $2 \leq \# \mathsf{M} < \mathsf{d}$ is a self-intersection
point of exactly one of the curves $\vect{\ell}^{(2\vect{n})}_{\vect{0},\vect{u}}$, $\vect{u} \in \{-1,1\}^{\mathsf{d}}$, $u_\mathsf{g} = 1$. If $ \mathsf{M} = \{ 1, \ldots, \mathsf{d} \}$, then
a point in $\LC^{(2\vect{n})}_{\vect{0},\mathsf{M}}$ is either
a self-intersection point of exactly one of the curves $\vect{\ell}^{(2\vect{n})}_{\vect{0},\vect{u}}$, $\vect{u} \in \{-1,1\}^{\mathsf{d}}$, $u_\mathsf{g} = 1$, or it is an intersection point of all of these $2^{\mathsf{d}-1}$ curves. This can also be seen in Figure \ref{v2:fig:lissajous2}, (a) and \ref{v2:fig:lissajous3}, (b) in which the sets
$\LC^{(2\vect{n})}_{\vect{0},0}$ and $\LC^{(2\vect{n})}_{\vect{0},1}$ are marked with blue and white dots, respectively. 

Statement c) of Proposition \ref{v2:1509021556} implies a similar (but weaker) assertion also for general $\vect{\kappa} \in \mathbb{Z}^{\mathsf{d}}$. Every element of $\LC^{(2\vect{n})}_{\vect{\kappa}, \mathsf{M}}$ with $\# \mathsf{M} \geq 2$ is a self-intersection point of one of the curves 
$\vect{\ell}^{(2\vect{n})}_{\vect{\kappa},\vect{u}}$, $\vect{u} \in \{-1,1\}^{\mathsf{d}}$, $u_\mathsf{g} = 1$, or it is an intersection point of at least two different curves $\vect{\ell}^{(2\vect{n})}_{\vect{\kappa},\vect{u}}$. 
\end{remark}

\vspace{-0.3em}

Similar as for the degenerate Lissajous curves $\vect{\ell}^{(\vect{n})}_{\vect{0},\vect{1}}$ and the 
node sets $\LC^{(\vect{n})}_{\mathfrak{r}}$, we can use a Chebyshev variety to characterize the sets $\LC^{(2\vect{n})}_{\vect{\kappa},\mathfrak{r}}$. To this end,
we define the affine real algebraic Chebyshev variety $\mathcal{C}^{(2\vect{n})}_{\vect{\kappa}} $ in $[-1,1]^{\mathsf{d}}$ by 
\begin{equation} \label{v2:1509222020}
\mathcal{C}^{(2\vect{n})}_{\vect{\kappa}} 
= \left\{\,\vect{x} \in [-1,1]^{\mathsf{d}}\,\left|\, (-1)^{\kappa_1} T_{2n_1}(x_1) = \ldots = (-1)^{\kappa_{\mathsf{d}}} T_{2n_{\mathsf{d}}}(x_{\mathsf{d}}) \right.\right\}.
\end{equation}

We obtain the following relation between the affine Chebyshev variety $\mathcal{C}^{(2\vect{n})}_{\vect{\kappa}}$, its singularities and the Lissajous curves 
$\vect{\ell}^{(2\vect{n})}_{\vect{\kappa},\vect{u}}$, $\vect{u} \in \{-1,1\}^{\mathsf{d}}$. 

\vspace{-0.5em}

\begin{theorem} \label{v2:201510121616}
The affine Chebyshev variety $\mathcal{C}^{(2\vect{n})}_{\vect{\kappa}}$ can be written as 
\begin{equation} \label{v2:AA1508251536} 
\mathcal{C}^{(2\vect{n})}_{\vect{\kappa}} = \textstyle\bigcup\limits_{\vect{u} \in \{-1,1\}^{\mathsf{d}}} \vect{\ell}^{(2\vect{n})}_{\vect{\kappa},\vect{u}}([0,\pi))
= \textstyle\bigcup\limits_{\substack{\vect{u} \in \{-1,1\}^{\mathsf{d}}:\\ u_{\mathsf{g}}=1}} \vect{\ell}^{(2\vect{n})}_{\vect{\kappa},\vect{u}}([0,2\pi)). 
\end{equation}

\vspace{-0.4em}

A point $\vect{x}\in \mathcal{C}^{(2\vect{n})}_{\vect{\kappa}}$ is a singular point of $\mathcal{C}^{(2\vect{n})}_{\vect{\kappa}}$ if and only if it is an element of $\LC^{(2\vect{n})}_{\vect{\kappa},\mathsf{M}}$ for $\mathsf{M}\subseteq \{1,\ldots,\mathsf{d}\}$ with $\#\mathsf{M}\geq 2$. Furthermore, for $\mathfrak{r}\in \{0,1\}$ 
we have
\begin{equation}  \label{v2:A1508251536} 
\LC^{(2\vect{n})}_{\vect{\kappa},\mathsf{M},\mathfrak{r}} =\left\{ \, \vect{x} \in \vect{F}^{\mathsf{d}}_{\mathsf{M}}\,\left|\, (-1)^{\kappa_1}T_{2n_1}(x_1) = \ldots = (-1)^{\kappa_{\mathsf{d}}}T_{2n_{\mathsf{d}}}(x_{\mathsf{d}}) = (-1)^{\mathfrak{r}} \,\right.\right\}.
\end{equation}
\end{theorem}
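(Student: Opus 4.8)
The plan is to follow the template of Theorem~\ref{v2:1508251450}, establishing the three assertions in turn: first the parametrization \eqref{v2:AA1508251536} of $\mathcal{C}^{(2\vect{n})}_{\vect{\kappa}}$ as a union of the Lissajous curves $\vect{\ell}^{(2\vect{n})}_{\vect{\kappa},\vect{u}}$, then the identification of the singular points with $\textstyle\bigcup_{\#\mathsf{M}\geq 2}\LC^{(2\vect{n})}_{\vect{\kappa},\mathsf{M}}$, and finally the explicit description \eqref{v2:A1508251536}.

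For the inclusion $\supseteq$ in \eqref{v2:AA1508251536}, I would insert $\vect{\ell}^{(2\vect{n})}_{\vect{\kappa},\vect{u}}(t)$ into the defining equations \eqref{v2:1509222020}. Since $2n_{\mathsf{i}}$ is even, $T_{2n_{\mathsf{i}}}$ is an even polynomial, so the sign $u_{\mathsf{i}}$ drops out and $(-1)^{\kappa_{\mathsf{i}}}T_{2n_{\mathsf{i}}}\bigl(u_{\mathsf{i}}\cos(\p_{\mathsf{i}}[\vect{n}]t-\tfrac{\kappa_{\mathsf{i}}\pi}{2n_{\mathsf{i}}})\bigr)=\cos(2\p[\vect{n}]t)$, which is independent of $\mathsf{i}$; hence every curve point lies in $\mathcal{C}^{(2\vect{n})}_{\vect{\kappa}}$. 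For the reverse inclusion I would take $\vect{x}\in\mathcal{C}^{(2\vect{n})}_{\vect{\kappa}}$, write $x_{\mathsf{i}}=\cos\theta_{\mathsf{i}}$ with $\theta_{\mathsf{i}}\in[0,\pi]$, choose $t'$ with $\cos(2\p[\vect{n}]t')$ equal to the common value $(-1)^{\kappa_{\mathsf{i}}}\cos(2n_{\mathsf{i}}\theta_{\mathsf{i}})$, and solve $\cos(2n_{\mathsf{i}}\theta_{\mathsf{i}}+\kappa_{\mathsf{i}}\pi)=\cos(2\p[\vect{n}]t')$ to obtain $\theta_{\mathsf{i}}=v_{\mathsf{i}}\p_{\mathsf{i}}[\vect{n}]t'-\tfrac{\kappa_{\mathsf{i}}\pi}{2n_{\mathsf{i}}}+\tfrac{\pi h_{\mathsf{i}}}{n_{\mathsf{i}}}$ for suitable $v_{\mathsf{i}}\in\{-1,1\}$, $h_{\mathsf{i}}\in\mathbb{Z}$. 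Setting $t=t'+l\pi/\p[\vect{n}]$ and matching $\cos\theta_{\mathsf{i}}$ with $u_{\mathsf{i}}\cos(\p_{\mathsf{i}}[\vect{n}]t-\tfrac{\kappa_{\mathsf{i}}\pi}{2n_{\mathsf{i}}})$ reduces, coordinate by coordinate, to a single congruence modulo $n_{\mathsf{i}}$ for $l$ (namely $l\equiv h_{\mathsf{i}}$ or $l\equiv\kappa_{\mathsf{i}}-h_{\mathsf{i}}$ according to the sign $v_{\mathsf{i}}$), the parity of the resulting quotient fixing $u_{\mathsf{i}}$. As the $n_{\mathsf{i}}$ are pairwise relatively prime, the Chinese remainder theorem yields one such $l$, exhibiting $\vect{x}$ as a curve point. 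The passage between the two union forms in \eqref{v2:AA1508251536} I would obtain from the half-period relation $\vect{\ell}^{(2\vect{n})}_{\vect{\kappa},\vect{u}}(\,\cdot\,+\pi)=\vect{\ell}^{(2\vect{n})}_{\vect{\kappa},\vect{u}'}$ with $u'_{\mathsf{g}}=-u_{\mathsf{g}}$, exactly as already used for \eqref{v2:B150915132}.

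For the singular points I would write $\mathcal{C}^{(2\vect{n})}_{\vect{\kappa}}$ as the common zero set of $f_{\mathsf{i}}(\vect{x})=(-1)^{\kappa_{\mathsf{i}}}T_{2n_{\mathsf{i}}}(x_{\mathsf{i}})-(-1)^{\kappa_{\mathsf{i}+1}}T_{2n_{\mathsf{i}+1}}(x_{\mathsf{i}+1})$, $\mathsf{i}\in\{1,\ldots,\mathsf{d}-1\}$, whose Jacobian is bidiagonal with entries $\pm(-1)^{\kappa_{\mathsf{i}}}T'_{2n_{\mathsf{i}}}(x_{\mathsf{i}})$. As in Theorem~\ref{v2:1508251450}, this matrix drops below rank $\mathsf{d}-1$ precisely when $T'_{2n_{\mathsf{i}}}(x_{\mathsf{i}})=0$ for at least two indices. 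Since $T'_{2n_{\mathsf{i}}}(x_{\mathsf{i}})=0$ on $[-1,1]$ exactly at the interior nodes $x_{\mathsf{i}}=z^{(2n_{\mathsf{i}})}_{i_{\mathsf{i}}}$, $i_{\mathsf{i}}\in\{1,\ldots,2n_{\mathsf{i}}-1\}$, where $T_{2n_{\mathsf{i}}}$ takes the value $\pm1$, a singular point forces the common value to be some $(-1)^{\mathfrak{r}}$, whence every $x_{\mathsf{j}}$ satisfies $T_{2n_{\mathsf{j}}}(x_{\mathsf{j}})=(-1)^{\mathfrak{r}+\kappa_{\mathsf{j}}}$ and thus $x_{\mathsf{j}}=z^{(2n_{\mathsf{j}})}_{i_{\mathsf{j}}}$ with $i_{\mathsf{j}}\equiv\kappa_{\mathsf{j}}+\mathfrak{r}\tmod 2$. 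Reading off the set $\mathsf{M}=\{\mathsf{j}:x_{\mathsf{j}}\in(-1,1)\}$ then gives $\vect{x}\in\LC^{(2\vect{n})}_{\vect{\kappa},\mathsf{M},\mathfrak{r}}$ with $\#\mathsf{M}\geq 2$, and conversely. The formula \eqref{v2:A1508251536} for arbitrary $\mathsf{M}$ I would verify directly from the same computation $T_{2n_{\mathsf{j}}}(z^{(2n_{\mathsf{j}})}_{i_{\mathsf{j}}})=(-1)^{i_{\mathsf{j}}}$ together with the fact that $\vect{F}^{\mathsf{d}}_{\mathsf{M}}$ in \eqref{v2:1609011140} encodes whether each index contributes an interior node ($\mathsf{j}\in\mathsf{M}$) or an endpoint $\pm1$ ($\mathsf{j}\notin\mathsf{M}$), matching the definition of $\I^{(2\vect{n})}_{\vect{\kappa},\mathsf{M},\mathfrak{r}}$.

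I expect the main obstacle to be the reverse inclusion in the first step: organizing the congruences so that a single shift parameter $l$ and a single sign vector $\vect{u}$ simultaneously reproduce all $\mathsf{d}$ coordinates. The delicate point is the choice of step $\pi/\p[\vect{n}]$ (rather than the finer $\pi/(2\p[\vect{n}])$), which turns the conditions into congruences modulo $n_{\mathsf{i}}$ with unit coefficient and thereby avoids a parity obstruction at the unique even index $\mathsf{g}$ permitted by \eqref{v2:1509151442}; the sign freedom in $\vect{u}$ is exactly what absorbs the leftover half-integer phases. The singular-point analysis and \eqref{v2:A1508251536}, by contrast, are routine transcriptions of the corresponding parts of Theorem~\ref{v2:1508251450}.
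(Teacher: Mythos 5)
Your proposal is correct and follows essentially the same route as the paper: the inclusion $\supseteq$ by direct substitution using the evenness of $T_{2n_{\mathsf{i}}}$, the inclusion $\subseteq$ by choosing $\theta_{\mathsf{i}}$ and $t'$, reducing to one congruence modulo $n_{\mathsf{i}}$ per coordinate (your case split $l\equiv h_{\mathsf{i}}$ versus $l\equiv\kappa_{\mathsf{i}}-h_{\mathsf{i}}$ is exactly the paper's substitution $h_{\mathsf{i}}=v_{\mathsf{i}}h'_{\mathsf{i}}+(1-v_{\mathsf{i}})\kappa_{\mathsf{i}}/2$) and applying the Chinese remainder theorem with step $\pi/\p[\vect{n}]$, the equality of the two unions via the half-period shift and the oddness of $\p_{\mathsf{g}}[\vect{n}]$, and the singular-point analysis via the bidiagonal Jacobian as in Theorem \ref{v2:1508251450}. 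The only difference is that the paper compresses the last part into a reference to the proof of Theorem \ref{v2:1508251450}, which you have correctly expanded.
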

The formula \eqref{v2:A1508251536} is satisfied for all $\mathsf{M}\subseteq \{1,\ldots,\mathsf{d}\}$. 
If $\#\mathsf{M}\in \{0,1\}$, the elements of the sets are regular points of the variety on the corners and edges of $[-1,1]^{\mathsf{d}}$.

\medskip

\begin{proof}
Recall that $\p_{\mathsf{g}}[\vect{n}]$ is odd. Thus, if $t'=t+\pi$ or $t'=t-\pi$, we have $\vect{\ell}^{(2\vect{n})}_{\vect{\kappa},\vect{u}}(t')=\vect{\ell}^{(2\vect{n})}_{\vect{\kappa},\vect{u}'}(t)$ with some $\vect{u}'\in\{-1,1\}^{\mathsf{d}}$ satisfying $u'_{\mathsf{g}}=-u_{\mathsf{g}}$. Therefore, the  sets on the right hand side of \eqref{v2:AA1508251536} are both equal to $\{ \vect{\ell}^{(2\vect{n})}_{\vect{\kappa},\vect{u}}(t)\,|\; \vect{u} \in \{-1,1\}^{\mathsf{d}}, \; t \in [0,2\pi)\}$.

Let $\vect{x}\in \mathcal{C}^{(2\vect{n})}_{\vect{\kappa}}$. We choose a $\theta_{\mathsf{i}}\in [0,\pi]$ and $t'\in \mathbb{R}$ such that 
$x_{\mathsf{i}}=\cos(\theta_{\mathsf{i}})$ and $(-1)^{\kappa_{\mathsf{i}}} T_{2n_{\mathsf{i}}}(x_{\mathsf{i}})=\cos(2\p[\vect{n}]t')$ for all $\mathsf{i}$. Then, there exist $\vect{v}\in \{-1,1\}^{\mathsf{d}}$ and $\vect{h}'\in\mathbb{Z}$ such that $2n_{\mathsf{i}}\theta_{\mathsf{i}}+\kappa_{\mathsf{i}}\pi= v_{\mathsf{i}}2\p[\vect{n}]t'+2 h'_{\mathsf{i}}\pi$ for all $\mathsf{i}$. Using $h_{\mathsf{i}}=v_{\mathsf{i}}h'_{\mathsf{i}}+(1-v_{\mathsf{i}})\kappa_{\mathsf{i}}/2\in\mathbb{Z}$, we have
\[x_{\mathsf{i}}=\cos(\p_{\mathsf{i}}[\vect{n}]t'+h_{\mathsf{i}}\pi /n_{\mathsf{i}}-\kappa_{\mathsf{i}}\pi/(2n_{\mathsf{i}}))\quad \text{for all $\mathsf{i}$}.\] By the Chinese remainder theorem, we can find an $l$ such that $l\equiv h_{\mathsf{i}}\tmod n_{\mathsf{i}}$. Then, for  $t=t'+l\pi/\p[\vect{n}]$  
we have $x_{\mathsf{i}}=(-1)^{\rho_{\mathsf{i}}}\cos(\p_{\mathsf{i}}[\vect{n}]t-\kappa_{\mathsf{i}}\pi/(2n_{\mathsf{i}}))$  
with $\rho_{\mathsf{i}}=(l-h_{\mathsf{i}})/n_{\mathsf{i}}$ for all $\mathsf{i}$. Therefore, $\vect{x} \in \vect{\ell}^{(2\vect{n})}_{\vect{\kappa}, \vect{u}}(\mathbb{R})=\vect{\ell}^{(2\vect{n})}_{\vect{\kappa}, \vect{u}}([0,2\pi))$ for an $\vect{u} \in 
\{-1,1\}^{\mathsf{d}}$. The relation $\vect{\ell}^{(2\vect{n})}_{\vect{\kappa},\vect{u}}([0,2\pi))\subseteq \mathcal{C}^{(2\vect{n})}_{\vect{\kappa}}$ is easily verified
by inserting $\vect{\ell}^{(2\vect{n})}_{\vect{\kappa},\vect{u}}(t)$ in the definition \eqref{v2:1509222020}.\\
\indent The remaining part of the proof follows the lines of the proof of Theorem \ref{v2:1508251450}.
\end{proof}

\begin{example}

\begin{enumerate}
\item[(i)]
In \cite{ErbKaethnerAhlborgBuzug2015}, non-degenerate Lissajous curves of the form
\begin{equation}\label{v2:1508101433}
\vect{\ell}^{(2n+2p,2n)}_{(n+p,n),\vect{1}}(t) = \left( \sin(n t), \sin( (n+p) t) \right), \quad n,p \in \mathbb{N},
\end{equation}
were considered, where $n$ and $p$ are assumed to be relatively prime. The curve \eqref{v2:1508101433} is non-degenerate if and only if $p$ is odd. In this case, the curve $\vect{\ell}^{(2n+2p,2n)}_{(n+p,n),\vect{1}}$ is invariant with respect to reflections at both coordinate axis and we have $\vect{\ell}^{(2n+2p,2n)}_{(n+p,n),\vect{1}} = \mathcal{C}^{(2n+2p,2n)}_{(n+p,n)}$. The respective set of node
points is $\LC^{(2n+2p,2n)}_{(n+p,n)}$. Furthermore, we have $\LC^{(2n+2p,2n)}_{(n+p,n)} = \LC^{(2n+2p,2n)}_{(0,1)}$. 

\item[(ii)]
If $n$ and $p$ are relatively prime and, in addition, $n$ is odd and $p$ is even, a generating curve for the set $\LC^{(2n+2p,2n)}_{(n+p,0)} = \LC^{(2n+2p,2n)}_{(0,1)}$ is given by 
\[ \vect{\ell}^{(2n+2p,2n)}_{(n+p,0),\vect{1}}(t) = \left( \sin(n t), \cos( (n+p) t) \right).\]
Again, the curves $\vect{\ell}^{(2n+2p,2n)}_{(n+p,0),\vect{1}}$ and the points $\LC^{(2n+2p,2n)}_{(n+p,0)}$ are invariant with respect to reflections at both coordinate axis. For $\vect{n} = (5,3)$, this case is illustrated in Figure \ref{v2:fig:lissajous2}, (b). This example was already considered in \cite{ErbKaethnerDenckerAhlborg2015}. 

\item[(iii)] An interesting two-dimensional example not considered in \cite{Erb2015,ErbKaethnerAhlborgBuzug2015,ErbKaethnerDenckerAhlborg2015} is given by the point set
$\LC^{(2 \vect{n})}_{\vect{0}}$ with $\vect{n} = (n_1, n_2)$, and $n_1, n_2$ relatively prime. We choose $\mathsf{g}$ according to \eqref{v2:1508071340} and set $\vect{u} = (1,-1)$ if $\mathsf{g} = 1$ and $\vect{u} = (-1,1)$ if $\mathsf{g} = 2$. By Theorem \ref{v2:201510121616}, the set $\mathcal{C}^{(2 \vect{n})}_{\vect{0}}$ corresponds to the union of the Lissajous curves $\vect{\ell}^{(2\vect{n})}_{\vect{0},\vect{1}}$ and
$\vect{\ell}^{(2 \vect{n})}_{\vect{0},\vect{u}}$. Further, by Remark \ref{v2:201510151703}, the subset $\LC^{(2\vect{n})}_{\vect{0},0}$ is the disjoint union of 
$\LD_{\vect{1}}^{(\vect{n})}$ and $\LD_{\vect{u}}^{(\vect{n})}$, and the subset $\LC^{(2\vect{n})}_{\vect{0},1}$ is precisely the set of all intersection points of
the curve $\vect{\ell}^{(2\vect{n})}_{\vect{0},\vect{1}}$ with the curve
$\vect{\ell}^{(2 \vect{n})}_{\vect{0},\vect{u}}$. A corresponding example with $\vect{n} = (5,3)$ and $\mathsf{g} = 1$ is illustrated in Figure \ref{v2:fig:lissajous2}, (a).
\end{enumerate}

\end{example}

\subsection{Discrete orthogonality structure}

Similar as in \eqref{v2:A1508291531}, we define for $\vectgamma\in \mathbb{N}_0^{\mathsf{d}}$
 the functions $\dchi^{(2\vect{n})}_{\indexvectgamma}\in \mathcal{L}(\I^{(2\vect{n})}_{\vect{\kappa}})$ by
\[\dchi^{(2\vect{n})}_{\indexvectgamma}(\vect{i})=\tprod_{\mathsf{j}=1}^{\mathsf{d}}\cos(\gamma_{\mathsf{j}}i_{\mathsf{j}}\pi/(2n_{\mathsf{j}})).\]

\vspace{-0.5em}

We remark that the considered domain $\I^{(2\vect{n})}_{\vect{\kappa}}$ depends on $\vect{\kappa}$ and therefore also the functions $\dchi^{(2\vect{n})}_{\indexvectgamma}$. We omit the explicit indication of this dependency. 

For  $\vect{i}\in \I^{(2\vect{n})}_{\vect{\kappa}}$, we define  the weights $\mathfrak{w}^{(2\vect{n})}_{\vect{\kappa},\vect{i}}$ by 
\[\mathfrak{w}^{(2\vect{n})}_{\vect{\kappa},\vect{i}}=2^{\#\mathsf{M}}/(2^{\mathsf{d+1}} \p[\vect{n}]) \quad \text{if}\ \ \vect{i}\in\I^{(2\vect{n})}_{\vect{\kappa},\mathsf{M}}.\]
Further, a   measure $\omega^{(2\vect{n})}_{\vect{\kappa}}$ on the power set $\mathcal{P}(\I^{(2\vect{n})}_{\vect{\kappa}})$  of $\I^{(2\vect{n})}_{\vect{\kappa}}$  is
well-defined by the corresponding values $\omega^{(2\vect{n})}_{\vect{\kappa}}(\{\vect{i}\})=\mathfrak{w}^{(2\vect{n})}_{\vect{\kappa},\vect{i}}$ for the one-element sets $\{\vect{i}\}\in\mathcal{P}(\I^{(2\vect{n})}_{\vect{\kappa}})$.

\vspace{-0.3em}

\begin{proposition}\label{v2:1508311218}
Let  $\vectgamma\in\mathbb{Z}^{\mathsf{d}}$ and $\displaystyle\dchi^{(2\vect{n})}_{\indexvectgamma}\in \mathcal{L}(\I^{(2\vect{n})}_{\vect{\kappa}})$. If $\tint\dchi^{(2\vect{n})}_{\indexvectgamma}\mathrm{d}\rule{1pt}{0pt}\omega^{(2\vect{n})}_{\vect{\kappa}}\neq 0$, then
\begin{equation}\label{v2:1507201241}
\text{there exists $\vect{h}\in \mathbb{N}_0^{\mathsf{d}}$ with $\gamma_{\mathsf{i}}=2h_{\mathsf{i}}n_{\mathsf{i}}$, $\mathsf{i}=1,\ldots,\mathsf{d}$,  and
$\tsum_{\mathsf{i}=1}^{\mathsf{d}}h_{\mathsf{i}}\in 2\mathbb{N}_0$}.
\end{equation}

\vspace{-0.5em}

If \eqref{v2:1507201241} is satisfied, then
$\tint\dchi^{(2\vect{n})}_{\indexvectgamma}\mathrm{d}\rule{1pt}{0pt}\omega^{(2\vect{n})}_{\vect{\kappa}}=(-1)^{\vartheta_{\vect{\kappa}}(\indexvectgamma)}$ with $\vartheta_{\vect{\kappa}}(\vectgamma)=\tsum_{\mathsf{i}=1}^{\mathsf{d}}h_{\mathsf{i}}\kappa_{\mathsf{i}}$.
\end{proposition}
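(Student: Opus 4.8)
The plan is to mirror the proof of Proposition~\ref{v2:1507211320}, replacing the class decomposition of $H^{(\vect{n})}$ by the one of $\vect{H}^{(2\vect{n})}$ from Proposition~\ref{v2:1509021556}. Fix $\mathsf{g}$ as in \eqref{v2:1508071340}. First I would write $\tint\dchi^{(2\vect{n})}_{\indexvectgamma}\mathrm{d}\omega^{(2\vect{n})}_{\vect{\kappa}}=\frac1{2^{\mathsf{d}+1}\p[\vect{n}]}\sum_{\mathsf{M}}2^{\#\mathsf{M}}\sum_{\vect{i}\in\I^{(2\vect{n})}_{\vect{\kappa},\mathsf{M}}}\dchi^{(2\vect{n})}_{\indexvectgamma}(\vect{i})$, expand every $\dchi^{(2\vect{n})}_{\indexvectgamma}(\vect{i})$ by the product-to-sum identity \eqref{v2:1507081828} (introducing a sum over $\vect{v}'\in\{-1,1\}^{\mathsf{d}}$), and invoke Proposition~\ref{v2:1509021556}~c) — each $\vect{i}\in\I^{(2\vect{n})}_{\vect{\kappa},\mathsf{M}}$ has exactly $2^{\#\mathsf{M}}$ preimages under $\vect{i}^{(2\vect{n})}_{\vect{\kappa}}$ — to absorb the factor $2^{\#\mathsf{M}}$ and collapse the $\mathsf{M}$-sum into a single sum over $\vect{h}=(l,\rho)\in\vect{H}^{(2\vect{n})}$. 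Substituting the residues of \eqref{v2:15082921} for $v_{\mathsf{i}}(\vect{h})\,i_{\mathsf{i}}$ (legitimate since $x\mapsto\cos(\pi\gamma_{\mathsf{i}}x/(2n_{\mathsf{i}}))$ has period $4n_{\mathsf{i}}$, with the convention $\rho_{\mathsf{g}}:=0$) and reindexing $\vect{v}'\mapsto\vect{v}'\vect{v}(\vect{h})$ componentwise, exactly as in the passage to \eqref{v2:1508221802}, I expect to reach a double sum over $\vect{v}\in\{-1,1\}^{\mathsf{d}}$ and $\vect{h}$ of $\cos\!\big(l\vartheta+\pi\sum_{\mathsf{i}\neq\mathsf{g}}v_{\mathsf{i}}\gamma_{\mathsf{i}}\rho_{\mathsf{i}}-\psi_{\vect{\kappa}}\big)$, where $\vartheta=\frac{\pi}{2\p[\vect{n}]}\sum_{\mathsf{i}}v_{\mathsf{i}}\gamma_{\mathsf{i}}\p_{\mathsf{i}}[\vect{n}]$ and $\psi_{\vect{\kappa}}=\frac{\pi}{2}\sum_{\mathsf{i}}v_{\mathsf{i}}\gamma_{\mathsf{i}}\kappa_{\mathsf{i}}/n_{\mathsf{i}}$.

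The genuinely new step is the $\rho$-summation. For each $\mathsf{i}\neq\mathsf{g}$ the two terms combine as $\cos\theta+\cos(\theta+\pi v_{\mathsf{i}}\gamma_{\mathsf{i}})$, which equals $2\cos\theta$ if $\gamma_{\mathsf{i}}$ is even and vanishes if $\gamma_{\mathsf{i}}$ is odd; hence the integral is zero unless all $\gamma_{\mathsf{i}}$ with $\mathsf{i}\neq\mathsf{g}$ are even, in which case the nested $\rho$-sum yields a factor $2^{\mathsf{d}-1}$ and leaves $\sum_{l=0}^{4\p[\vect{n}]-1}\cos(l\vartheta-\psi_{\vect{\kappa}})$. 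Applying \eqref{v2:1506171253} with $N=4\p[\vect{n}]-1$ and observing that $2\p[\vect{n}]\vartheta=\pi\sum_{\mathsf{i}}v_{\mathsf{i}}\gamma_{\mathsf{i}}\p_{\mathsf{i}}[\vect{n}]\in\pi\mathbb{Z}$ always holds, this $l$-sum vanishes whenever $\vartheta\notin2\pi\mathbb{Z}$ and equals $4\p[\vect{n}]\cos\psi_{\vect{\kappa}}$ whenever $\vartheta\in2\pi\mathbb{Z}$.

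For the necessary condition \eqref{v2:1507201241}, I would assume the integral is nonzero, so some $\vect{v}$ gives $\vartheta\in2\pi\mathbb{Z}$, i.e. $\sum_{\mathsf{i}}v_{\mathsf{i}}\gamma_{\mathsf{i}}/(2n_{\mathsf{i}})\in2\mathbb{Z}$. Reducing $\sum_{\mathsf{i}}v_{\mathsf{i}}\gamma_{\mathsf{i}}\p_{\mathsf{i}}[\vect{n}]$ modulo $n_{\mathsf{j}}$ and using the pairwise coprimality (only the $\mathsf{j}$-th summand survives and $\p_{\mathsf{j}}[\vect{n}]$ is a unit mod $n_{\mathsf{j}}$) forces $\gamma_{\mathsf{j}}=m_{\mathsf{j}}n_{\mathsf{j}}$, whereupon the condition becomes $\sum_{\mathsf{i}}v_{\mathsf{i}}m_{\mathsf{i}}\in4\mathbb{Z}$. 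I anticipate the main obstacle here is the asymmetric role of $\mathsf{g}$: for $\mathsf{i}\neq\mathsf{g}$ one has $m_{\mathsf{i}}=2h_{\mathsf{i}}$ immediately (since $\gamma_{\mathsf{i}}$ is even while $n_{\mathsf{i}}$ is odd), but $n_{\mathsf{g}}$ may be even, so the parity of $m_{\mathsf{g}}$ is not automatic. The resolution is that $\sum_{\mathsf{i}\neq\mathsf{g}}v_{\mathsf{i}}m_{\mathsf{i}}=2\sum_{\mathsf{i}\neq\mathsf{g}}v_{\mathsf{i}}h_{\mathsf{i}}$ is already even, so $\sum_{\mathsf{i}}v_{\mathsf{i}}m_{\mathsf{i}}\in4\mathbb{Z}$ forces $m_{\mathsf{g}}$ even as well; writing $m_{\mathsf{g}}=2h_{\mathsf{g}}$ gives $\gamma_{\mathsf{i}}=2h_{\mathsf{i}}n_{\mathsf{i}}$ for all $\mathsf{i}$, and dividing $\sum v_{\mathsf{i}}m_{\mathsf{i}}\in4\mathbb{Z}$ by $2$ yields $\sum_{\mathsf{i}}h_{\mathsf{i}}\in2\mathbb{N}_0$ because $\sum v_{\mathsf{i}}h_{\mathsf{i}}\equiv\sum h_{\mathsf{i}}\pmod{2}$.

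Conversely, assuming \eqref{v2:1507201241} and substituting $\gamma_{\mathsf{i}}=2h_{\mathsf{i}}n_{\mathsf{i}}$, every $\gamma_{\mathsf{i}}$ with $\mathsf{i}\neq\mathsf{g}$ is even and $\vartheta=\pi\sum_{\mathsf{i}}v_{\mathsf{i}}h_{\mathsf{i}}\in2\pi\mathbb{Z}$ for \emph{every} $\vect{v}$, since $\sum v_{\mathsf{i}}h_{\mathsf{i}}\equiv\sum h_{\mathsf{i}}\equiv0\pmod{2}$. Then each of the $2^{\mathsf{d}}$ choices of $\vect{v}$ contributes $4\p[\vect{n}]\cos\psi_{\vect{\kappa}}$ with $\psi_{\vect{\kappa}}=\pi\sum_{\mathsf{i}}v_{\mathsf{i}}h_{\mathsf{i}}\kappa_{\mathsf{i}}$, so $\cos\psi_{\vect{\kappa}}=(-1)^{\sum v_{\mathsf{i}}h_{\mathsf{i}}\kappa_{\mathsf{i}}}=(-1)^{\sum h_{\mathsf{i}}\kappa_{\mathsf{i}}}=(-1)^{\vartheta_{\vect{\kappa}}(\indexvectgamma)}$ independently of $\vect{v}$. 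Collecting the prefactors $\frac1{2^{\mathsf{d}+1}\p[\vect{n}]}\cdot\frac1{2^{\mathsf{d}}}\cdot2^{\mathsf{d}-1}$ against the $2^{\mathsf{d}}$ values of $\vect{v}$ and the factor $4\p[\vect{n}]$ gives precisely $(-1)^{\vartheta_{\vect{\kappa}}(\indexvectgamma)}$, completing the proof.
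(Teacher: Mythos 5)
Your argument is correct and follows essentially the same route as the paper's proof: the same reduction via Proposition \ref{v2:1509021556} to a sum over $\vect{H}^{(2\vect{n})}$, the same factor $S(\vectgamma)$ from the $\rho$-summation forcing $\gamma_{\mathsf{i}}$ even for $\mathsf{i}\neq\mathsf{g}$, the same use of \eqref{v2:1506171253} and the divisibility/parity argument singling out the index $\mathsf{g}$. The paper merely compresses the derivation of the intermediate formula by referring back to the proof of Proposition \ref{v2:1507211320}, whereas you spell those steps out explicitly.
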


\begin{proof} 
Note that $\cos(\vartheta+\rho\pi)=(-1)^{\rho}\cos\vartheta$ for $\rho\in\mathbb{Z}$. Using the trigonometric identity \eqref{v2:1507081828} and  
applying Proposition \ref{v2:1509021556} in the same way as Proposition \ref{v2:1509021542} in the proof of Proposition \ref{v2:1507211320}, we obtain for $\tint\dchi^{(2\vect{n})}_{\indexvectgamma}\mathrm{d}\rule{1pt}{0pt}\omega^{(2\vect{n})}_{\vect{\kappa}}$ the value
\begin{equation}\label{v2:A1506171253}
\frac{S(\vectgamma)}{2^{2\mathsf{d}+1}\p[\vect{n}]}\tsum_{\vect{v}\in \{-1,1\}^{\mathsf{d}}} \tsum_{l=0}^{4\p[\vect{n}]-1}
\cos\left(l \dfrac{\pi}{2\p[\vect{n}]}\tsum_{\mathsf{i}=1}^{\mathsf{d}}v_{\mathsf{i}}\gamma_{\mathsf{i}}\p_{\mathsf{i}}[\vect{n}]\ -\ \vartheta_{\vect{\kappa}}(\vectgamma,\vect{v})\pi\right),
\end{equation}
with $\vartheta_{\vect{\kappa}}(\vectgamma,\vect{v})=v_1\gamma_1\kappa_1/(2n_1)+ \ldots+v_{\mathsf{d}}\gamma_{\mathsf{d}}\kappa_{\mathsf{d}}/(2n_{\mathsf{d}})$, and
\[S(\vectgamma)=\tsum_{(\rho_1,\ldots,\rho_{\mathsf{g}-1},\rho_{\mathsf{g}+1},\ldots,\rho_{\mathsf{d}})\in \{0,1\}^{\mathsf{d}-1}}(-1)^{\Sigma'}\ \ \text{with}\ \
\Sigma'=\tsum_{\substack{\mathsf{i}=1\\\mathsf{i}\neq \mathsf{g}}}^{\mathsf{d}}\gamma_{\mathsf{i}}\rho_{\mathsf{i}}.\]

By the identity \eqref{v2:1506171253}, the value \eqref{v2:A1506171253} is zero if for all  $\vect{v}\in\{-1,1\}^{\mathsf{d}}$  the number \eqref{v2:1507081849} is not an element of  $4\mathbb{Z}$.   Further, if there exists an  $\mathsf{i}\neq \mathsf{g}$ such that the integer  $\gamma_{\mathsf{i}}$ is odd, then  $S(\vectgamma)=0$. 
Assume that $\tint\dchi^{(2\vect{n})}_{\indexvectgamma}\mathrm{d}\rule{1pt}{0pt}\omega^{(2\vect{n})}_{\vect{\kappa}}\neq 0$. 
Then, we can conclude that  $\gamma_{\mathsf{i}}\in 2\mathbb{N}_0$ for all $\mathsf{i}\neq \mathsf{g}$ and that there exists  
a  $\vect{v}\in\{-1,1\}^{\mathsf{d}}$ such that  \eqref{v2:1507081849} is an element of   $4\mathbb{Z}$,  and in particular an element of  $\mathbb{Z}$. Since the  $n_{\mathsf{i}}$, $\mathsf{i}\in \{1,\ldots,\mathsf{d}\}$, 
are pairwise relatively prime, there is a $\vect{h}'\in\mathbb{N}_0^{\mathsf{d}}$ such that $\gamma_{\mathsf{i}}=h'_{\mathsf{i}}n_{\mathsf{i}}$, $\mathsf{i}\in \{1,\ldots,\mathsf{d}\}$.   Since the  $n_{\mathsf{i}}$ are odd for $\mathsf{i}\neq \mathsf{g}$ we have $h'_{\mathsf{i}}\in 2\mathbb{N}_0$ for $\mathsf{i}\neq\mathsf{g}$.
The number 
\eqref{v2:1507081849} equals  $\tsum_{\mathsf{i}=1}^{\mathsf{d}}  v_{\mathsf{i}}h'_{\mathsf{i}}$ and this number is an element of $4\mathbb{Z}$. Therefore,  $h'_{\mathsf{g}}$ is also an even
integer. Thus, we can conclude that $h'_1+\ldots+h'_{\mathsf{d}}\in  4\mathbb{N}_0$. Now, for $\vect{h}=\vect{h}'/2$ we obtain \eqref{v2:1507201241}. 

On the other hand, if \eqref{v2:1507201241} is satisfied, then $\vartheta_{\vect{\kappa}}(\vectgamma,\vect{v})=\tsum_{\mathsf{i}=1}^{\mathsf{d}}v_{\mathsf{i}}h_{\mathsf{i}}\kappa_{\mathsf{i}}\equiv -\vartheta_{\vect{\kappa}}(\vectgamma) \tmod 2$ and $S(\vectgamma)=2^{\mathsf{d}-1}$. Thus, since \eqref{v2:1507081849} is in $4\mathbb{Z}$, the value
\eqref{v2:A1506171253} equals $(-1)^{\vartheta_{\vect{\kappa}}(\indexvectgamma)}$. 
\end{proof}

\medskip

We define the sets (see Fig. \ref{v2:fig:lissajous4})
\[
\vect{\Gamma}^{(2\vect{n})}_{\vect{\kappa}} = \left\{\,\vectgamma\in\mathbb{N}_0^{\mathsf{d}}\ \left|\begin {array}{ll} 
\forall\,\mathsf{i}\in\{1,\ldots,\mathsf{d}\}:&  \!\!\!\gamma_{\mathsf{i}}<2n_{\mathsf{i}}\!\!\\
\forall\,\mathsf{i},\mathsf{j}\ \text{with}\ \mathsf{i}\neq\mathsf{j}:&  \!\!\!\gamma_{\mathsf{i}}/n_{\mathsf{i}}+\gamma_{\mathsf{j}}/n_{\mathsf{j}}\leq  2\!\!\\
\forall\,\mathsf{i},\mathsf{j}\ \text{with}\ \kappa_{\mathsf{i}}\not \equiv \kappa_{\mathsf{j}}\tmod 2:& \!\!\!\gamma_{\mathsf{i}}/n_{\mathsf{i}}+\gamma_{\mathsf{j}}/n_{\mathsf{j}}< 2\!\!
\end{array}\right. 
\right\}\cup\{(0,\ldots,0,2n_{\mathsf{d}})\}.
\]
\begin{proposition}\label{v2:1506250740}
With the values from \eqref{v2:1505030732}, we have
\[
\#\vect{\Gamma}^{(2\vect{n})}_{\vect{\kappa}}= \# \I^{(2\vect{n})}_{\vect{\kappa}}= \#\I^{(2\vect{n})}_{\vect{\kappa},0}+ \# \I^{(2\vect{n})}_{\vect{\kappa},1}.
\] 
\end{proposition}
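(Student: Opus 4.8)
The plan is to adapt, essentially verbatim, the reflection argument from the proof of Proposition \ref{v2:1507151430}, now applied with $\vect{m}=2\vect{n}$, i.e. using the reflection $\mathfrak{s}^{(2\vect{n})}$ that replaces the maximal normalised coordinate $\gamma_{\mathsf{k}}$, $\mathsf{k}=\mathsf{k}^{(2\vect{n})}[\vectgamma]$, by $2n_{\mathsf{k}}-\gamma_{\mathsf{k}}$. First I would split $\vect{\Gamma}^{(2\vect{n})}_{\vect{\kappa}}$ into the closed lower part $\Lambda_0=\{\vectgamma\in\vect{\Gamma}^{(2\vect{n})}_{\vect{\kappa}}:\gamma_{\mathsf{i}}\leq n_{\mathsf{i}}\text{ for all }\mathsf{i}\}$ and the remainder $\Lambda_+=\vect{\Gamma}^{(2\vect{n})}_{\vect{\kappa}}\setminus\Lambda_0$. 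The pairwise constraint $\gamma_{\mathsf{i}}/n_{\mathsf{i}}+\gamma_{\mathsf{j}}/n_{\mathsf{j}}\leq 2$ shows at once that at most one coordinate of any $\vectgamma$ can exceed $n_{\mathsf{i}}$; hence every non-special element of $\Lambda_+$ has exactly one index $\mathsf{k}$ with $n_{\mathsf{k}}<\gamma_{\mathsf{k}}<2n_{\mathsf{k}}$, and the same constraint then forces $\gamma_{\mathsf{i}}<n_{\mathsf{i}}$ \emph{strictly} for every $\mathsf{i}\neq\mathsf{k}$.

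Next I would show that $\mathfrak{s}^{(2\vect{n})}$ restricts to a bijection from the open lower box $\Lambda_1=\{\vectgamma\in\mathbb{N}_0^{\mathsf{d}}:\gamma_{\mathsf{i}}<n_{\mathsf{i}}\text{ for all }\mathsf{i}\}$ onto $\Lambda_+$. As in the earlier proof, pairwise coprimality of the $n_{\mathsf{i}}$ guarantees that a nonzero $\vectgamma\in\Lambda_1$ has a unique argmax $\mathsf{k}$, and reflecting sends $0<\gamma_{\mathsf{k}}<n_{\mathsf{k}}$ to $n_{\mathsf{k}}<\gamma'_{\mathsf{k}}<2n_{\mathsf{k}}$ while leaving the other (strictly sub-$n$) coordinates fixed; the pair $(\mathsf{k},\mathsf{i})$ then satisfies $\gamma'_{\mathsf{k}}/n_{\mathsf{k}}+\gamma'_{\mathsf{i}}/n_{\mathsf{i}}=2-\gamma_{\mathsf{k}}/n_{\mathsf{k}}+\gamma_{\mathsf{i}}/n_{\mathsf{i}}<2$, so $\mathfrak{s}^{(2\vect{n})}(\vectgamma)\in\vect{\Gamma}^{(2\vect{n})}_{\vect{\kappa}}$ \emph{irrespective of} $\vect{\kappa}$, while $\vect{0}$ maps to the adjoined special point $(0,\ldots,0,2n_{\mathsf{d}})$. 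Conversely each element of $\Lambda_+$ has its unique over-$n$ coordinate as argmax, and $\mathfrak{s}^{(2\vect{n})}$ is an involution on this domain, giving injectivity and surjectivity exactly as before. Thus $\#\Lambda_+=\#\Lambda_1=\p[\vect{n}]$.

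It then remains to count $\Lambda_0$, and this is the one genuinely new point compared with Proposition \ref{v2:1507151430}: $\Lambda_0$ is \emph{not} a product set, because on its boundary the condition $\gamma_{\mathsf{i}}/n_{\mathsf{i}}+\gamma_{\mathsf{j}}/n_{\mathsf{j}}<2$ for $\kappa_{\mathsf{i}}\not\equiv\kappa_{\mathsf{j}}\tmod 2$ forbids two coordinates of opposite $\kappa$-parity from simultaneously attaining the value $n_{\mathsf{i}}$. I would resolve this by inclusion--exclusion: setting $\Lambda_0^{(\mathfrak{r})}=\{\vectgamma:\gamma_{\mathsf{i}}\leq n_{\mathsf{i}}\ \forall\mathsf{i},\text{ and }\gamma_{\mathsf{i}}=n_{\mathsf{i}}\Rightarrow\kappa_{\mathsf{i}}\equiv\mathfrak{r}\tmod 2\}$ for $\mathfrak{r}\in\{0,1\}$, each $\Lambda_0^{(\mathfrak{r})}$ is a genuine product set with $\#\Lambda_0^{(\mathfrak{r})}=\#\I^{(2\vect{n})}_{\vect{\kappa},\mathfrak{r}}$ (matching the factorisation in \eqref{v2:1505030732}), one checks $\Lambda_0=\Lambda_0^{(0)}\cup\Lambda_0^{(1)}$ and $\Lambda_0^{(0)}\cap\Lambda_0^{(1)}=\Lambda_1$, whence $\#\Lambda_0=\#\I^{(2\vect{n})}_{\vect{\kappa},0}+\#\I^{(2\vect{n})}_{\vect{\kappa},1}-\p[\vect{n}]$.

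Adding the two parts gives $\#\vect{\Gamma}^{(2\vect{n})}_{\vect{\kappa}}=\#\Lambda_0+\#\Lambda_+=\#\I^{(2\vect{n})}_{\vect{\kappa},0}+\#\I^{(2\vect{n})}_{\vect{\kappa},1}=\#\I^{(2\vect{n})}_{\vect{\kappa}}$, as claimed. The main obstacle, and the only place where the argument departs from the $\vect{n}$-case of Proposition \ref{v2:1507151430}, is precisely the $\vect{\kappa}$-dependent coupling of the boundary coordinates in $\Lambda_0$; once this is handled by the inclusion--exclusion above (so that the center hyperplanes $\gamma_{\mathsf{k}}=n_{\mathsf{k}}$ are absorbed entirely into $\Lambda_0$ and never need to be reflected), the reflection machinery transfers without further difficulty.
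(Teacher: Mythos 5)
Your argument is correct and follows essentially the same route as the paper: the reflection $\mathfrak{s}^{(2\vect{n})}$ restricted to the open box $\Lambda_1$ gives exactly the paper's bijection onto the part of $\vect{\Gamma}^{(2\vect{n})}_{\vect{\kappa}}$ having a coordinate exceeding $n_{\mathsf{i}}$, and your product sets $\Lambda_0^{(\mathfrak{r})}$ coincide with the paper's subsets $\vect{\Gamma}^{(2\vect{n})}_{\vect{\kappa},\mathfrak{r}}$. The only difference is bookkeeping: you combine the pieces by inclusion--exclusion on $\Lambda_0=\Lambda_0^{(0)}\cup\Lambda_0^{(1)}$ with intersection $\Lambda_1$, whereas the paper absorbs the fixed points of $\mathfrak{s}^{(2\vect{n})}$ into a single bijection from $\vect{\Gamma}^{(2\vect{n})}_{\vect{\kappa},1}$ onto $\vect{\Gamma}^{(2\vect{n})}_{\vect{\kappa}}\setminus\vect{\Gamma}^{(2\vect{n})}_{\vect{\kappa},0}$; the arithmetic is identical.
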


\begin{proof}
For $\mathfrak{r}\in\{0,1\}$, we consider the subsets
\[\vect{\Gamma}^{(2\vect{n})}_{\vect{\kappa},\mathfrak{r}} = 
\left\{\,\vectgamma\in\mathbb{N}_0^{\mathsf{d}}\left|\begin {array}{l} 
\forall\,\mathsf{i}\ \text{with}\ \kappa_{\mathsf{i}}\equiv \mathfrak{r}\tmod 2:\ \gamma_{\mathsf{i}}/n_{\mathsf{i}}\leq 1,\!\!\\
\forall\,\mathsf{i}\ \text{with}\  \kappa_{\mathsf{i}}\not\equiv \mathfrak{r}\tmod 2:\ \gamma_{\mathsf{i}}/n_{\mathsf{i}}< 1
\!\!\end{array}\right.
\right\}\]
of $\vect{\Gamma}^{(2\vect{n})}_{\vect{\kappa}}$. Further, we use the notation
 \[\vect{\Gamma}^{(2\vect{n}),\textrm{eq}}_{\vect{\kappa}}=\left\{\,\vectgamma\in \vect{\Gamma}^{(2\vect{n})}_{\vect{\kappa}}\,\left|\,\exists\,\mathsf{i}: \, \gamma_{\mathsf{i}}=n_{\mathsf{i}}\right.\right\},\] 
$\vect{\Gamma}^{(2\vect{n}),\textrm{ne}}_{\vect{\kappa}}=\vect{\Gamma}^{(2\vect{n})}_{\vect{\kappa}}\setminus \vect{\Gamma}^{(2\vect{n}),\textrm{eq}}_{\vect{\kappa}}$ and 
 $\vect{\Gamma}^{(2\vect{n}),\textrm{ne}}_{\vect{\kappa},\mathfrak{r}} = \vect{\Gamma}^{(2\vect{n})}_{\vect{\kappa},\mathfrak{r}}\setminus \vect{\Gamma}^{(2\vect{n}),\textrm{eq}}_{\vect{\kappa}}$. We use \eqref{v2:1509051005} with $\vect{m}=2\vect{n}$ in this proof. Since the numbers $n_{\mathsf{i}}$ are pairwise relatively prime, for all $\vectgamma\in \vect{\Gamma}^{(2\vect{n}),\textrm{ne}}_{\vect{\kappa},1}\setminus\{\vect{0}\}$ there exists a $\mathsf{k}$ such that \eqref{v2:1505012254} holds. 
As in the proof of Proposition \ref{v2:1507151430} we conclude that the function $\mathfrak{s}^{(2\vect{n})}$ is a bijection from
$\vect{\Gamma}^{(2\vect{n}),\textrm{ne}}_{\vect{\kappa},1}$ onto $\vect{\Gamma}^{(2\vect{n}),\textrm{ne}}_{\vect{\kappa}}\setminus \vect{\Gamma}^{(2\vect{n}),\textrm{ne}}_{\vect{\kappa},0}$. 
Every  element of $\vect{\Gamma}^{(2\vect{n}),\textrm{eq}}_{\vect{\kappa}}$ is either in 
$\vect{\Gamma}^{(2\vect{n})}_{\vect{\kappa},0}$ or in $\vect{\Gamma}^{(2\vect{n})}_{\vect{\kappa},1}$ and $\mathfrak{s}^{(2\vect{n})}(\vectgamma)=\vectgamma$ for  all $\vectgamma\in \vect{\Gamma}^{(2\vect{n}),\textrm{eq}}_{\vect{\kappa}}$. Therefore,  the function $\mathfrak{s}^{(2\vect{n})}$ is a bijection from
$\vect{\Gamma}^{(2\vect{n})}_{\vect{\kappa},1}$ onto $\vect{\Gamma}^{(2\vect{n})}_{\vect{\kappa}}\setminus \vect{\Gamma}^{(2\vect{n})}_{\vect{\kappa},0}$ and hence we have $\# \vect{\Gamma}^{(2\vect{n})}_{\vect{\kappa},1} = \# \vect{\Gamma}^{(2\vect{n})}_{\vect{\kappa}} - \# \vect{\Gamma}^{(2\vect{n})}_{\vect{\kappa},0}$. From the cross product structure of the set $\vect{\Gamma}^{(2\vect{n})}_{\vect{\kappa},\mathfrak{r}}$, we obtain the cardinality $\# \vect{\Gamma}^{(2\vect{n})}_{\vect{\kappa},\mathfrak{r}}$ as the cardinality $\# \I^{(2\vect{n})}_{\vect{\kappa},\mathfrak{r}}$ given in \eqref{v2:1505030732}. 
\end{proof}

\medskip

In the following, we use the numbers $\mathfrak{e}(\vectgamma)$ given in \eqref{v2:1508251214} and 
\begin{equation}\label{v2:1508251213}
\mathfrak{f}^{(2\vect{n})}(\vectgamma)=\left\{ \begin{array}{cl} 
\#\{\,\mathsf{i}\,|\,\gamma_{\mathsf{i}}=n_{\mathsf{i}}\,\}-1,\; & \text{if}\;\, \exists\,\mathsf{i}\in\{1,\ldots,\mathsf{d}\}:\,\gamma_{\mathsf{i}}=n_{\mathsf{i}}, \\
0, \; & \text{otherwise}.
\end{array} \right. 
\end{equation}

\begin{figure}[htb]
	\centering
	\subfigure[\hspace*{1em} $\vect{\Gamma}^{(10,6)}_{\vect{0}}$ and \;$\vect{\Gamma}^{(10,6)}_{(0,1)} = 
	\vect{\Gamma}^{(10,6)}_{\vect{0}} \setminus \{(5,3)\}$]{\includegraphics[scale=0.8]{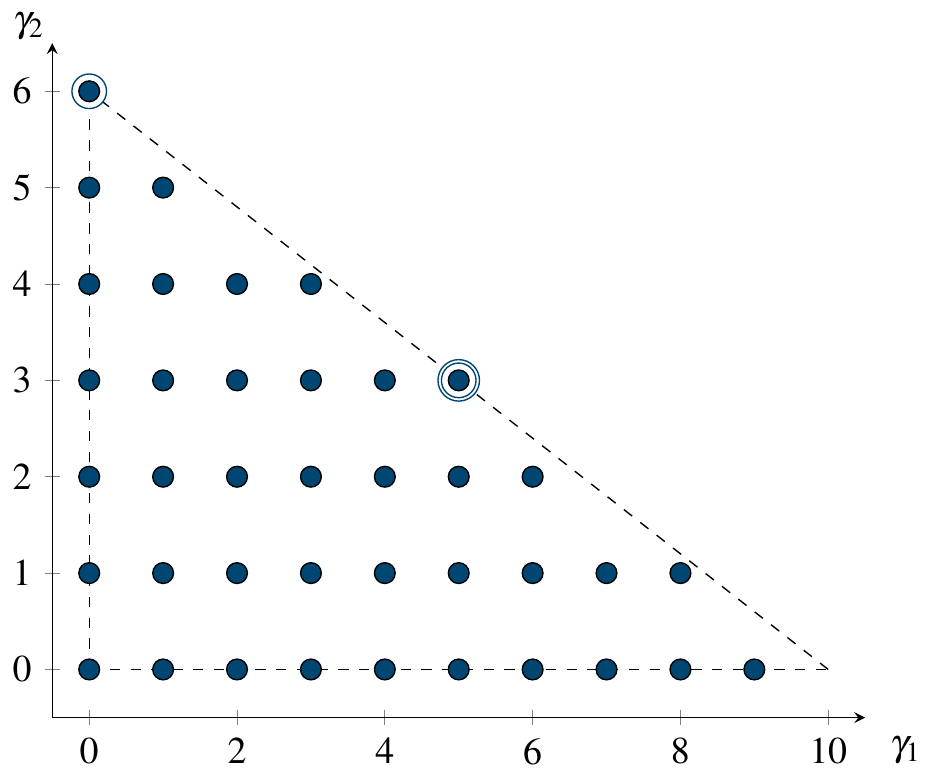}}
	\hfill	
	\subfigure[\hspace*{1em} $\vect{\Gamma}^{(6,2,4)}_{\vect{0}}$]{\includegraphics[scale=0.8]{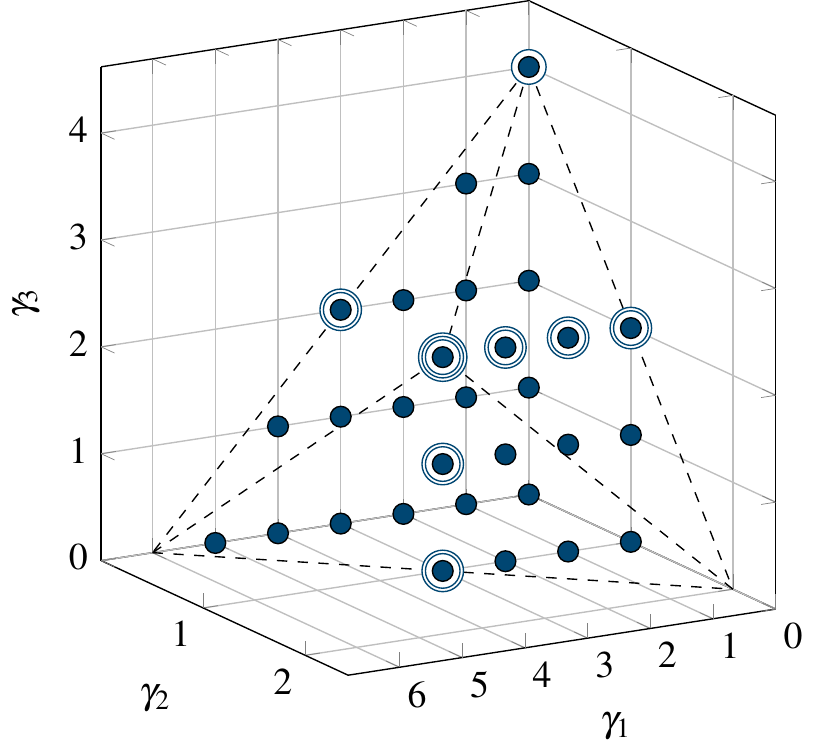}}
  	\caption{Examples of the sets $\vect{\Gamma}^{(2\vect{n})}_{\vect{\kappa}}$ for
  	$\mathsf{d} = 2$ and $\mathsf{d} = 3$ related to the examples $\LC^{(2\vect{n})}_{\vect{\kappa}}$ given in Figure \ref{v2:fig:lissajous2} and \ref{v2:fig:lissajous3}. 
  	The dotted lines mark the boundary of the polytopes enclosing the sets $\vect{\Gamma}^{(2\vect{n})}_{\vect{\kappa}}$. The element 
  	$\{(0,\ldots,0,2 n_{\mathsf{d}})\}$ is marked
with one ring. All dots with two rings indicate the elements $\vectgamma$ with $\mathfrak{f}^{(2\vect{n})}(\vectgamma) = 1$ in \eqref{v2:1508251213}.
The dot with three rings describes the element $\vectgamma$ satisfying $\mathfrak{f}^{(2\vect{n})}(\vectgamma) = 2$.
  	} \label{v2:fig:lissajous4}
\end{figure}

\begin{theorem} \label{v2:thm:210510081555}
The functions $\dchi^{(2\vect{n})}_{\indexvectgamma}$, $\vectgamma\in \vect{\Gamma}^{(2\vect{n})}_{\vect{\kappa}}$, form an  orthogonal basis of the inner product space  $(\mathcal{L}(\I^{(2\vect{n})}_{\vect{\kappa}}),\langle\,\cdot,\cdot\,\rangle_{\omega^{(2\vect{n})}_{\vect{\kappa}}})$.   Further,  the norms of the basis elements satisfy
\begin{equation}\label{v2:A1508221825}
\|\dchi^{(2\vect{n})}_{\indexvectgamma}\|_{\omega^{(2\vect{n})}_{\vect{\kappa}}}^2   =  
 \left\{ \begin{array}{cl}  2^{-\mathfrak{e}(\indexvectgamma)+\mathfrak{f}^{(2\vect{n})}(\indexvectgamma)},\; & \text{if}\quad \vectgamma \in\vect{\Gamma}^{(2\vect{n})}_{\vect{\kappa}}\setminus\{(0,\ldots,0,2n_{\mathsf{d}})\},\\
   1,\; & \text{if}\quad \vectgamma =(0,\ldots,0,2n_{\mathsf{d}}).
\end{array} \right. 
\end{equation}
\end{theorem}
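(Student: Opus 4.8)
The plan is to mirror the proof of Theorem~\ref{v2:1507091911}, replacing Proposition~\ref{v2:1507211320} by Proposition~\ref{v2:1508311218} and Proposition~\ref{v2:1507151430} by Proposition~\ref{v2:1506250740}; the one genuinely new feature is the sign $(-1)^{\vartheta_{\vect{\kappa}}(\indexvectgamma)}$ produced by Proposition~\ref{v2:1508311218}. First I would observe that the $\dchi^{(2\vect{n})}_{\indexvectgamma}$ are real-valued and the weights $\mathfrak{w}^{(2\vect{n})}_{\vect{\kappa},\vect{i}}$ are positive, so $\langle\,\cdot,\cdot\,\rangle_{\omega^{(2\vect{n})}_{\vect{\kappa}}}$ is a genuine inner product. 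Fixing $\vectgamma',\vectgamma''\in\vect{\Gamma}^{(2\vect{n})}_{\vect{\kappa}}$ and setting $\vectgamma(\vect{v})=(|\gamma'_1+v_1\gamma''_1|,\ldots,|\gamma'_{\mathsf{d}}+v_{\mathsf{d}}\gamma''_{\mathsf{d}}|)$ for $\vect{v}\in\{-1,1\}^{\mathsf{d}}$, the product-to-sum identity \eqref{v2:1507081828} with $\mathsf{r}=2$ gives, exactly as in \eqref{v2:1507222159} and using that the cosine is even,
\[
\dchi^{(2\vect{n})}_{\indexvectgamma'}\dchi^{(2\vect{n})}_{\indexvectgamma''}=\dfrac1{2^{\mathsf{d}}}\tsum_{\vect{v}\in\{-1,1\}^{\mathsf{d}}}\dchi^{(2\vect{n})}_{\indexvectgamma(\vect{v})}.
\]
Integrating against $\omega^{(2\vect{n})}_{\vect{\kappa}}$ and applying Proposition~\ref{v2:1508311218} then reduces $\langle\dchi^{(2\vect{n})}_{\indexvectgamma'},\dchi^{(2\vect{n})}_{\indexvectgamma''}\rangle_{\omega^{(2\vect{n})}_{\vect{\kappa}}}$ to $2^{-\mathsf{d}}$ times the sum of $(-1)^{\vartheta_{\vect{\kappa}}(\indexvectgamma(\vect{v}))}$ over those $\vect{v}$ for which $\vectgamma(\vect{v})$ satisfies \eqref{v2:1507201241}.

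For orthogonality ($\vectgamma'\neq\vectgamma''$) I would show this index set of contributing $\vect{v}$ is empty. Suppose $\vectgamma(\vect{v})$ satisfies \eqref{v2:1507201241} with $\vect{h}\in\mathbb{N}_0^{\mathsf{d}}$, and treat first the case where neither vector equals $(0,\ldots,0,2n_{\mathsf{d}})$, so $\gamma'_{\mathsf{i}},\gamma''_{\mathsf{i}}<2n_{\mathsf{i}}$. Then $2h_{\mathsf{i}}n_{\mathsf{i}}=|\gamma'_{\mathsf{i}}+v_{\mathsf{i}}\gamma''_{\mathsf{i}}|<4n_{\mathsf{i}}$ forces $h_{\mathsf{i}}\in\{0,1\}$, with $h_{\mathsf{i}}=0$ giving $\gamma'_{\mathsf{i}}=\gamma''_{\mathsf{i}}$ and $h_{\mathsf{i}}=1$ requiring $v_{\mathsf{i}}=1$, $\gamma'_{\mathsf{i}}+\gamma''_{\mathsf{i}}=2n_{\mathsf{i}}$. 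Since $\vectgamma'\neq\vectgamma''$ not all $h_{\mathsf{i}}$ vanish, and as $\tsum h_{\mathsf{i}}$ is even there are indices $\mathsf{i},\mathsf{j}$ with $h_{\mathsf{i}}=h_{\mathsf{j}}=1$. Adding $\gamma'_{\mathsf{i}}/n_{\mathsf{i}}+\gamma''_{\mathsf{i}}/n_{\mathsf{i}}=2$ to the analogous identity at $\mathsf{j}$ forces the defining bounds $\gamma'_{\mathsf{i}}/n_{\mathsf{i}}+\gamma'_{\mathsf{j}}/n_{\mathsf{j}}\leq2$ and $\gamma''_{\mathsf{i}}/n_{\mathsf{i}}+\gamma''_{\mathsf{j}}/n_{\mathsf{j}}\leq2$ to be equalities; since $n_{\mathsf{i}},n_{\mathsf{j}}$ are relatively prime and $\gamma'_{\mathsf{i}}<2n_{\mathsf{i}}$, the equality $\gamma'_{\mathsf{i}}/n_{\mathsf{i}}+\gamma'_{\mathsf{j}}/n_{\mathsf{j}}=2$ forces $\gamma'_{\mathsf{i}}=n_{\mathsf{i}}$, $\gamma'_{\mathsf{j}}=n_{\mathsf{j}}$, whence $\gamma''_{\mathsf{i}}=n_{\mathsf{i}}$, $\gamma''_{\mathsf{j}}=n_{\mathsf{j}}$ as well. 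Thus $\gamma'_{\mathsf{i}}=\gamma''_{\mathsf{i}}$ at every index, contradicting $\vectgamma'\neq\vectgamma''$. If instead one vector is $(0,\ldots,0,2n_{\mathsf{d}})$, a direct inspection of the components shows that the divisibility part of \eqref{v2:1507201241} can hold only when the other vector is $\vect{0}$, in which case $\tsum h_{\mathsf{i}}=1$ is odd; so again no $\vect{v}$ contributes and the inner product vanishes.

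For the norms ($\vectgamma'=\vectgamma''=\vectgamma$) I would evaluate the signed count. For the special vector $(0,\ldots,0,2n_{\mathsf{d}})$ every $\vect{v}$ satisfies \eqref{v2:1507201241} with $\tsum h_{\mathsf{i}}=h_{\mathsf{d}}\in\{0,2\}$ and $\vartheta_{\vect{\kappa}}=h_{\mathsf{d}}\kappa_{\mathsf{d}}$ even, so all $2^{\mathsf{d}}$ signs are $+1$ and the norm is $1$. For $\vectgamma\neq(0,\ldots,0,2n_{\mathsf{d}})$ one has $\vectgamma(\vect{v})_{\mathsf{i}}=\gamma_{\mathsf{i}}(1+v_{\mathsf{i}})$, so \eqref{v2:1507201241} holds exactly when $v_{\mathsf{i}}=-1$ on $R=\{\mathsf{i}:\gamma_{\mathsf{i}}\notin\{0,n_{\mathsf{i}}\}\}$ and the number of $\mathsf{i}\in Q=\{\mathsf{i}:\gamma_{\mathsf{i}}=n_{\mathsf{i}}\}$ with $v_{\mathsf{i}}=1$ is even, with $h_{\mathsf{i}}=1$ precisely for those $\mathsf{i}\in Q$. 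Summing over the free indices of $P=\{\mathsf{i}:\gamma_{\mathsf{i}}=0\}$ (a factor $2^{\#P}$) and over even-size subsets $S\subseteq Q$ weighted by $(-1)^{\tsum_{\mathsf{i}\in S}\kappa_{\mathsf{i}}}$ yields a factor $T=\tfrac12\bigl[\tprod_{\mathsf{i}\in Q}(1+(-1)^{\kappa_{\mathsf{i}}})+\tprod_{\mathsf{i}\in Q}(1-(-1)^{\kappa_{\mathsf{i}}})\bigr]$. The main obstacle is exactly the evaluation of $T$, which a priori could vanish; but the third defining condition of $\vect{\Gamma}^{(2\vect{n})}_{\vect{\kappa}}$ forbids two indices of $Q$ with $\kappa_{\mathsf{i}}\not\equiv\kappa_{\mathsf{j}}\tmod 2$ (they would give $\gamma_{\mathsf{i}}/n_{\mathsf{i}}+\gamma_{\mathsf{j}}/n_{\mathsf{j}}=2$), so all $\kappa_{\mathsf{i}}$, $\mathsf{i}\in Q$, share one parity and $T=2^{\#Q-1}$ (with $T=1$ if $Q=\emptyset$). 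Hence the norm is $2^{\#P+\#Q-1-\mathsf{d}}=2^{-\#R-1}$ when $\#Q\geq1$ and $2^{\#P-\mathsf{d}}=2^{-\#R}$ when $\#Q=0$; since $\mathfrak{e}(\vectgamma)=\#Q+\#R$ and $\mathfrak{f}^{(2\vect{n})}(\vectgamma)=\#Q-1$ (resp.\ $0$), both cases coincide with $2^{-\mathfrak{e}(\indexvectgamma)+\mathfrak{f}^{(2\vect{n})}(\indexvectgamma)}$, proving \eqref{v2:A1508221825}. Finally, the positive norms and pairwise orthogonality give linear independence, and since $\#\vect{\Gamma}^{(2\vect{n})}_{\vect{\kappa}}=\#\I^{(2\vect{n})}_{\vect{\kappa}}=\dim\mathcal{L}(\I^{(2\vect{n})}_{\vect{\kappa}})$ by Proposition~\ref{v2:1506250740}, the functions form a basis.
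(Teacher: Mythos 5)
Your proposal is correct and follows essentially the same route as the paper's proof: the product-to-sum identity reducing everything to Proposition~\ref{v2:1508311218}, the coprimality/inequality argument from the definition of $\vect{\Gamma}^{(2\vect{n})}_{\vect{\kappa}}$ to rule out contributing $\vect{v}$ when $\vectgamma'\neq\vectgamma''$, the observation that the third defining condition forces all $\kappa_{\mathsf{i}}$ with $\gamma_{\mathsf{i}}=n_{\mathsf{i}}$ to share a parity so the signed count does not cancel, and Proposition~\ref{v2:1506250740} for the basis claim. The only differences are cosmetic reorganizations (treating $(0,\ldots,0,2n_{\mathsf{d}})$ as a separate case in the orthogonality step, and evaluating the signed sum over even subsets directly rather than first showing every contributing sign is $+1$).
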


\begin{proof} We fix $\vectgamma',\vectgamma''\in \vect{\Gamma}^{(2\vect{n})}_{\vect{\kappa}}$ and use the notation \eqref{v2:1508251455}.

Assume that $\vectgamma'\neq\vectgamma''$ and that there is $\vect{v}$ such that $\vectgamma=\vectgamma(\vect{v})$ satisfies \eqref{v2:1507201241}. Since  $\vectgamma'\neq \vectgamma''$, the set $\mathsf{P}=\{\,\mathsf{i}\,|\,h_{\mathsf{i}}>0\,\}$ is not empty.  For all $\mathsf{i}\notin \mathsf{P}$ we have $\gamma'_{\mathsf{i}}=\gamma''_{\mathsf{i}}$. As in the proof of Theorem \ref{v2:1507091911} we conclude that both,  
$\vectgamma'$ and $\vectgamma''$,  are different from $(0,\ldots,0,2n_{\mathsf{d}})$, that $h_{\mathsf{i}}=1$ for all $\mathsf{i}\in \mathsf{P}$, that $v_{\mathsf{i}}=1$ for all $\mathsf{i}\in \mathsf{P}$, and that $\#\mathsf{P}\geq 2$.
There is a $\mathsf{k}'\in \mathsf{P}$ such that 
$\gamma'_{\mathsf{k}'}>n_{\mathsf{k}'}$ or $\gamma''_{\mathsf{k}'}>n_{\mathsf{k}'}$, since otherwise 
$\gamma'_{\mathsf{i}}=\gamma''_{\mathsf{i}}=n_{\mathsf{i}}$, $\mathsf{i}\in \mathsf{P}$, and thus $\vectgamma'=\vectgamma''$.
Without loss of generality, we assume $\gamma'_{\mathsf{k}'}>n_{\mathsf{k}'}$. By the definition of $\vect{\Gamma}^{(2\vect{n})}_{\vect{\kappa}}$,
we have $\gamma'_{\mathsf{i}}<n_{\mathsf{i}}$ for all $\mathsf{i}\neq \mathsf{k}'$. Further, there is a $\mathsf{k}''\in \mathsf{P}$ such that $\gamma''_{\mathsf{k}''} > n_{\mathsf{k}''}$, for  otherwise   $\mathsf{P}=\{\mathsf{k}'\}$ and $\#\mathsf{P}=1$. By the definition of $\vect{\Gamma}^{(2\vect{n})}_{\vect{\kappa}}$, we obtain also here $\gamma''_{\mathsf{i}}<n_{\mathsf{k}''}$ for all $\mathsf{i}\neq \mathsf{k}''$. Therefore, $\#\mathsf{P}\leq 2$ and thus $\#\mathsf{P}=2$. Therefore, $\mathsf{P}=\{\mathsf{k}', \mathsf{k}''\}$ with $\mathsf{k}''\neq \mathsf{k}'$ and
we obtain
\[\gamma'_{\mathsf{k}'}/n_{\mathsf{k}'}+\gamma''_{\mathsf{k}'}/n_{\mathsf{k}'}=2,\quad \gamma'_{\mathsf{k}''}/n_{\mathsf{k}''}+\gamma''_{\mathsf{k}''}/n_{\mathsf{k}''}=2. \]
Further, by definition of $\vect{\Gamma}^{(2\vect{n})}_{\vect{\kappa}}$ we have
$\gamma'_{\mathsf{k}'}/n_{\mathsf{k}'}+\gamma'_{\mathsf{k}''}/n_{\mathsf{k}''}\leq 2$ and $\gamma''_{\mathsf{k}'}/n_{\mathsf{k}'}+\gamma''_{\mathsf{k}''}/n_{\mathsf{k}''}\leq 2$.
Therefore, $\gamma'_{\mathsf{k}'}n_{\mathsf{k}''}+\gamma'_{\mathsf{k}''}n_{\mathsf{k}'}= 2 n_{\mathsf{k}''}n_{\mathsf{k}'}$. Since $n_{\mathsf{k}''}$, $n_{\mathsf{k}'}$ are relatively prime,  $\gamma'_{\mathsf{k}'}$ is a multiple of $n_{\mathsf{k}'}$. This is a contradiction to $n_{\mathsf{k}'}<\gamma'_{\mathsf{k}'}< 2n_{\mathsf{k}'}$.
By Proposition \ref{v2:1508311218}, we have thus shown that $\tint\dchi^{(2\vect{n})}_{\indexvectgamma(\vect{v})}\mathrm{d}\rule{1pt}{0pt}\omega^{(2\vect{n})}_{\vect{\kappa}}=0$ for all $\vect{v} \in \{-1,1\}^{\mathsf{d}}$ if $\vectgamma'\neq\vectgamma''$. Similar to \eqref{v2:1507222159}, we have the identity
\begin{equation}\label{v2:1509011446}
\dchi^{(2\vect{n})}_{\indexvectgamma'}\dchi^{(2\vect{n})}_{\indexvectgamma''}=\dfrac1{2^{\mathsf{d}}}\tsum_{\vect{v}\in\{-1,1\}^{\mathsf{d}}}\dchi^{(2\vect{n})}_{\indexvectgamma(\vect{v})}
\end{equation}
and conclude that $\tint\dchi^{(2\vect{n})}_{\indexvectgamma'}\dchi^{(2\vect{n})}_{\indexvectgamma''}\mathrm{d}\rule{1pt}{0pt}\omega^{(2\vect{n})}_{\vect{\kappa}}=0$ if $\vectgamma'\neq\vectgamma''$.

\medskip

On the other hand,  let $\vectgamma'=\vectgamma''$.  If $\vectgamma'=(0,\ldots,0,2n_{\mathsf{d}})$, then $\|\dchi^{(2\vect{n})}_{\indexvectgamma'}\|_{\omega^{(2\vect{n})}_{\vect{\kappa}}}^2=1$. Now, we suppose that $\vectgamma'\neq (0,\ldots,0,2n_{\mathsf{d}})$
and consider the set
\begin{equation}\label{v2:1509011442}
\left\{\,\vect{v}\in\{-1,1\}^{\mathsf{d}}\,|\,\vectgamma = \vectgamma(\vect{v})\ \text{satisfies \eqref{v2:1507201241}}\right\}.
\end{equation}
Using the notation $\mathsf{N}=\{\,\mathsf{i}\,|\,\gamma'_{\mathsf{i}}>0\}$, $\mathsf{M}=\{\,\mathsf{i}\,|\,\gamma'_{\mathsf{i}} = n_{\mathsf{i}}\}$,
 $\mathsf{M}(\vect{v})=\{\,\mathsf{i}\in \mathsf{M}\,|\,v_{\mathsf{i}}=1\}$, the set \eqref{v2:1509011442} can be reformulated as
\begin{equation}
\label{v2:1508251259}
\{\,\vect{v}\in\{-1,1\}^{\mathsf{d}}\,|\,\text{$v_{\mathsf{i}}=-1$ for all $\mathsf{i}\in \mathsf{N}\setminus \mathsf{M}$ and $\#\mathsf{M}(\vect{v})$ is even}\,\}.
\end{equation}
The number $A$ of elements in \eqref{v2:1508251259} is given by
\[A=2^{\mathsf{d}-\#\mathsf{N}}\tsum_{\substack{m=0\\m\equiv 0\tmod 2}}^{\#\mathsf{M}}\displaystyle\binom{\#\mathsf{M}}{m}\]
and this number equals 
$A=2^{\mathsf{d}-\#\mathsf{N}}$ if $\mathsf{M}=\emptyset$ and $A=2^{\mathsf{d}-\#\mathsf{N}}2^{\#\mathsf{M}-1}$ otherwise.

\medskip

Now, let $\vect{v}$ be an element of \eqref{v2:1508251259}. We use the notation $\vartheta_{\vect{\kappa}}(\vectgamma)$ given in Proposition \ref{v2:1508311218} and obtain $\vartheta_{\vect{\kappa}}(\vectgamma(\vect{v}))=\tsum_{\mathsf{i}\in \mathsf{M}(\vect{v})}\kappa_{\mathsf{i}}$. By definition of $\vect{\Gamma}^{(2\vect{n})}_{\vect{\kappa}}$ we have $\kappa_{\mathsf{i}}\equiv\kappa_{\mathsf{j}}\tmod 2$ for all $\mathsf{i},\mathsf{j}\in \mathsf{M}$. Since  
 $\#\mathsf{M}(\vect{v})$ is even, the integer  $\vartheta_{\vect{\kappa}}(\vectgamma(\vect{v}))$ is even, and  by Proposition \ref{v2:1508311218} we have $ \tint \dchi^{(2\vect{n})}_{\indexvectgamma(\vect{v})} \mathrm{d}\rule{1pt}{0pt}\omega^{(2\vect{n})}_{\vect{\kappa}} =1$. 
Using \eqref{v2:1509011446}, we conclude $\|\dchi^{(2\vect{n})}_{\indexvectgamma'}\|_{\omega^{(2\vect{n})}_{\vect{\kappa}}}^2=A/2^{\mathsf{d}}$. 

The just shown formula \eqref{v2:A1508221825} implies that
$\dchi^{(2\vect{n})}_{\indexvectgamma}\neq 0$ for all $\vectgamma\in \vect{\Gamma}^{(2\vect{n})}_{\vect{\kappa}}$. 
Since the functions $\dchi^{(2\vect{n})}_{\indexvectgamma}$, $\vectgamma \in \Gamma^{(2\vect{n})}_{\vect{\kappa}}$, are pairwise orthogonal, 
they are linearly independent. Further, by Proposition \ref{v2:1506250740} we have
 $\#\vect{\Gamma}^{(2\vect{n})}_{\vect{\kappa}}= \#\I^{(2\vect{n})}_{\vect{\kappa}}$. This implies that the functions $\dchi^{(2\vect{n})}_{\indexvectgamma}$, $\vectgamma\in \vect{\Gamma}^{(2\vect{n})}_{\vect{\kappa}}$, form a  basis of the vector space $(\mathcal{L}(\I^{(2\vect{n})}_{\vect{\kappa}}),\langle\,\cdot,\cdot\,\rangle_{\omega^{(2\vect{n})}_{\vect{\kappa}}})$.
\end{proof}

\subsection{Polynomial interpolation}

As in \eqref{v2:1508201411}, we have for all $\vectgamma\in\mathbb{N}_0^{\mathsf{d}}$ and $\vect{i}\in \I^{(2\vect{n})}_{\vect{\kappa}}$ the relation
\[T_{\indexvectgamma}(\vect{z}^{(2\vect{n})}_{\vect{i}}) = \dchi^{(2\vect{n})}_{\indexvectgamma}(\vect{i})\]
between the $\mathsf{d}$-variate Chebyshev polynomials $T_{\indexvectgamma}$ and the functions $\dchi^{(2\vect{n})}_{\indexvectgamma} \in \mathcal{L}(\I^{(2\vect{n})}_{\vect{\kappa}})$. We want to find a polynomial $P^{(2\vect{n})}_{\vect{\kappa},h}$ that for given data values $h(\vect{i}) \in \mathbb{R}$, $\vect{i} \in \I^{(2\vect{n})}_{\vect{\kappa}}$,  satisfies the interpolation condition
\begin{equation} \label{v2:1509181420}
 P^{(2\vect{n})}_{\vect{\kappa},h} (\vect{z}^{(2\vect{n})}_{\vect{i}}) = h({\vect{i}}) \quad \text{for all}\quad \vect{i} \in \I^{(2\vect{n})}_{\vect{\kappa}}.
\end{equation}
We use 
\[\Pi^{(2\vect{n})}_{\vect{\kappa}} = \vspan \left\{\, T_{\indexvectgamma}\,\left|\, \vectgamma \in \vect{\Gamma}^{(2\vect{n})}_{\vect{\kappa}} \right. \right\}\]
as underlying space for this interpolation problem. 
The set $ \{\, T_{\indexvectgamma}\,|\, \vectgamma \in \vect{\Gamma}^{(2\vect{n})}_{\vect{\kappa}} \}$ is an orthogonal basis of the space $\Pi^{(2\vect{n})}_{\vect{\kappa}}$ with respect to the inner product given in \eqref{v2:1508220014}. 

\medskip

For $\vect{i} \in \I^{(2\vect{n})}_{\vect{\kappa}}$, we define on $[-1,1]^{\mathsf{d}}$ the polynomial functions $L^{(2\vect{n})}_{\vect{\kappa},\vect{i}}$ by 
\[  L^{(2\vect{n})}_{\vect{\kappa},\vect{i}}(\vect{x}) = \mathfrak{w}^{(2\vect{n})}_{\vect{\kappa},\vect{i}} \Bigl( \ 
 \tsum_{\substack{\indexvectgamma \in \vect{\Gamma}^{(2\vect{n})}_{\vect{\kappa}}}}\!\!\dfrac{2^{-\mathfrak{f}^{(2\vect{n})}(\indexvectgamma)}}{\|T_{\indexvectgamma}\|^2}  T_{\indexvectgamma}(\vect{z}_{\vect{i}}^{(2\vect{n})})\,T_{\indexvectgamma}(\vect{x}) \ - \ \;\! T_{2n_{\mathsf{d}}}(z^{(2n_{\mathsf{d}})}_{i_{\mathsf{d}}})\,T_{2n_{\mathsf{d}}}(x_{\mathsf{d}})\Bigr)
\]
and obtain the following result for polynomial interpolation on the point set $\LC^{(2\vect{n})}_{\vect{\kappa}}$.
The proof follows the lines of the proof of Theorem \ref{v2:1509011746}.

\begin{theorem} \label{v2:201510121554} 
For $h\in\mathcal{L}(\I^{(2\vect{n})}_{\vect{\kappa}})$, the interpolation problem \eqref{v2:1509181420} has the  uniquely determined solution  
\[ P^{(2\vect{n})}_{\vect{\kappa},h} = \sum_{\vect{i} \in \I^{(2\vect{n})}_{\vect{\kappa}}} h({\vect{i}}) L^{(2\vect{n})}_{\vect{\kappa},\vect{i}}\]
in the polynomial space $\Pi^{(2\vect{n})}_{\vect{\kappa}}$. Moreover, we have $\vspan \{\,P^{(2\vect{n})}_{\vect{\kappa},h}\,|\,h\in \mathcal{L}(\I^{(2\vect{n})}_{\vect{\kappa}})\,\}=\Pi^{(2\vect{n})}_{\vect{\kappa}}$ and the polynomials
$L^{(2\vect{n})}_{\vect{\kappa},\vect{i}}$, $\vect{i}\in\I^{(2\vect{n})}_{\vect{\kappa}}$, form a basis of  $\Pi^{(2\vect{n})}_{\vect{\kappa}}$. 
\end{theorem}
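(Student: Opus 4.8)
The plan is to mimic the proof of Theorem~\ref{v2:1509011746} essentially verbatim, substituting the discrete orthogonality of Theorem~\ref{v2:thm:210510081555} for that of Theorem~\ref{v2:1507091911}, the measure $\omega^{(2\vect{n})}_{\vect{\kappa}}$ for $\omega^{(\vect{n})}$, and the basis $\dchi^{(2\vect{n})}_{\indexvectgamma}$ for $\dchi^{(\vect{n})}_{\indexvectgamma}$; the only genuinely new bookkeeping concerns the factor $2^{-\mathfrak{f}^{(2\vect{n})}(\indexvectgamma)}$ built into $L^{(2\vect{n})}_{\vect{\kappa},\vect{i}}$. First I would note that every $L^{(2\vect{n})}_{\vect{\kappa},\vect{i}}$ lies in $\Pi^{(2\vect{n})}_{\vect{\kappa}}$: the displayed sum runs over $\indexvectgamma \in \vect{\Gamma}^{(2\vect{n})}_{\vect{\kappa}}$ and the subtracted term equals $T_{2n_{\mathsf{d}}}(x_{\mathsf{d}}) = T_{(0,\ldots,0,2n_{\mathsf{d}})}(\vect{x})$, whose index also belongs to $\vect{\Gamma}^{(2\vect{n})}_{\vect{\kappa}}$. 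Hence $P^{(2\vect{n})}_{\vect{\kappa},h}\in\Pi^{(2\vect{n})}_{\vect{\kappa}}$ for each $h$.

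The key step is an identity that recasts the coefficients of $L^{(2\vect{n})}_{\vect{\kappa},\vect{i}}$ in the normalization adapted to $\omega^{(2\vect{n})}_{\vect{\kappa}}$. Combining $\|T_{\indexvectgamma}\|^2 = 2^{-\mathfrak{e}(\indexvectgamma)}$ from \eqref{v2:1508210758} with the norm formula \eqref{v2:A1508221825}, one gets, for $\indexvectgamma\neq(0,\ldots,0,2n_{\mathsf{d}})$,
\[
\frac{2^{-\mathfrak{f}^{(2\vect{n})}(\indexvectgamma)}}{\|T_{\indexvectgamma}\|^2}
= 2^{\mathfrak{e}(\indexvectgamma)-\mathfrak{f}^{(2\vect{n})}(\indexvectgamma)}
= \frac{1}{\|\dchi^{(2\vect{n})}_{\indexvectgamma}\|_{\omega^{(2\vect{n})}_{\vect{\kappa}}}^2},
\]
while for $\indexvectgamma=(0,\ldots,0,2n_{\mathsf{d}})$ the coefficient inside the sum equals $2$ and the subtracted correction term lowers it to $1=1/\|\dchi^{(2\vect{n})}_{\indexvectgamma}\|_{\omega^{(2\vect{n})}_{\vect{\kappa}}}^2$. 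Thus the correction term is exactly the device that makes one uniform normalization valid across all of $\vect{\Gamma}^{(2\vect{n})}_{\vect{\kappa}}$, so that $L^{(2\vect{n})}_{\vect{\kappa},\vect{i}}=\mathfrak{w}^{(2\vect{n})}_{\vect{\kappa},\vect{i}}\tsum_{\indexvectgamma\in\vect{\Gamma}^{(2\vect{n})}_{\vect{\kappa}}}\|\dchi^{(2\vect{n})}_{\indexvectgamma}\|_{\omega^{(2\vect{n})}_{\vect{\kappa}}}^{-2}\,T_{\indexvectgamma}(\vect{z}^{(2\vect{n})}_{\vect{i}})\,T_{\indexvectgamma}$.

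Next I would set $h_{\vect{i}}(\vect{i}')=L^{(2\vect{n})}_{\vect{\kappa},\vect{i}}(\vect{z}^{(2\vect{n})}_{\vect{i}'})$ and use the pointwise relation $T_{\indexvectgamma}(\vect{z}^{(2\vect{n})}_{\vect{i}})=\dchi^{(2\vect{n})}_{\indexvectgamma}(\vect{i})$ to obtain $h_{\vect{i}}(\vect{i}')=\mathfrak{w}^{(2\vect{n})}_{\vect{\kappa},\vect{i}}\tsum_{\indexvectgamma}\|\dchi^{(2\vect{n})}_{\indexvectgamma}\|_{\omega^{(2\vect{n})}_{\vect{\kappa}}}^{-2}\dchi^{(2\vect{n})}_{\indexvectgamma}(\vect{i})\dchi^{(2\vect{n})}_{\indexvectgamma}(\vect{i}')$. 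Pairing against a basis function and invoking the orthogonality of Theorem~\ref{v2:thm:210510081555} then yields $\langle h_{\vect{i}},\dchi^{(2\vect{n})}_{\indexvectgamma}\rangle_{\omega^{(2\vect{n})}_{\vect{\kappa}}}=\mathfrak{w}^{(2\vect{n})}_{\vect{\kappa},\vect{i}}\dchi^{(2\vect{n})}_{\indexvectgamma}(\vect{i})$, which coincides with $\langle\delta_{\vect{i}},\dchi^{(2\vect{n})}_{\indexvectgamma}\rangle_{\omega^{(2\vect{n})}_{\vect{\kappa}}}$ by the definition of the measure. Since the $\dchi^{(2\vect{n})}_{\indexvectgamma}$, $\indexvectgamma\in\vect{\Gamma}^{(2\vect{n})}_{\vect{\kappa}}$, form a basis, $h_{\vect{i}}=\delta_{\vect{i}}$, i.e. $L^{(2\vect{n})}_{\vect{\kappa},\vect{i}}$ is the fundamental interpolant and $P^{(2\vect{n})}_{\vect{\kappa},h}$ satisfies \eqref{v2:1509181420}.

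Finally, the linear map $h\mapsto P^{(2\vect{n})}_{\vect{\kappa},h}$ into $\Pi^{(2\vect{n})}_{\vect{\kappa}}$ is injective, since a vanishing interpolant forces all data values to vanish; as $\dim\mathcal{L}(\I^{(2\vect{n})}_{\vect{\kappa}})=\#\I^{(2\vect{n})}_{\vect{\kappa}}=\#\vect{\Gamma}^{(2\vect{n})}_{\vect{\kappa}}=\dim\Pi^{(2\vect{n})}_{\vect{\kappa}}$ by Proposition~\ref{v2:1506250740}, it is a bijection, which yields both uniqueness and the fact that the $L^{(2\vect{n})}_{\vect{\kappa},\vect{i}}$ form a basis. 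I expect the main obstacle to lie entirely in the first two paragraphs: verifying that the $2^{-\mathfrak{f}^{(2\vect{n})}(\indexvectgamma)}$ weighting together with the single subtracted $T_{2n_{\mathsf{d}}}$-term reproduces the coefficients $\|\dchi^{(2\vect{n})}_{\indexvectgamma}\|_{\omega^{(2\vect{n})}_{\vect{\kappa}}}^{-2}$ uniformly, since here, unlike in the $\vect{n}$-case, the norms of $T_{\indexvectgamma}$ and $\dchi^{(2\vect{n})}_{\indexvectgamma}$ differ precisely by the factor $2^{\mathfrak{f}^{(2\vect{n})}(\indexvectgamma)}$.
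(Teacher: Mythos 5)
Your proposal is correct and follows exactly the route the paper intends: the paper's own ``proof'' of this theorem consists solely of the remark that it follows the lines of the proof of Theorem \ref{v2:1509011746}, and your argument is precisely that transcription, with the one genuinely new point --- that $2^{-\mathfrak{f}^{(2\vect{n})}(\indexvectgamma)}/\|T_{\indexvectgamma}\|^2=\|\dchi^{(2\vect{n})}_{\indexvectgamma}\|_{\omega^{(2\vect{n})}_{\vect{\kappa}}}^{-2}$ away from $(0,\ldots,0,2n_{\mathsf{d}})$, and that the subtracted $T_{2n_{\mathsf{d}}}$-term corrects the coefficient $2$ to $1$ at that exceptional index --- checked correctly against \eqref{v2:1508210758} and \eqref{v2:A1508221825}.
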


\medskip

\noindent Similar as in \eqref{v2:1509241909B}, we consider the  uniquely determined coefficients  in  the expansion
\begin{equation}\label{v2:1509241909}
P^{(2\vect{n})}_{\vect{\kappa},h} = \sum_{\indexvectgamma \in \vect{\Gamma}^{(2\vect{n})}_{\vect{\kappa}}} c_{\indexvectgamma}(h)\;\!T_{\indexvectgamma}
\end{equation}
of the interpolating polynomial $P^{(2\vect{n})}_{\vect{\kappa},h}$ in the orthogonal basis $T_{\indexvectgamma}$, $\vectgamma \in \vect{\Gamma}_{\vect{\kappa}}^{(2\vect{n})}$, of the
polynomial spaces $\Pi^{(2\vect{n})}_{\vect{\kappa}}$.   Using Theorem \ref{v2:thm:210510081555}, we can conclude the following result.

\begin{corollary}
For $h\in\mathcal{L}(\I^{(2\vect{n})}_{\vect{\kappa}})$, the  coefficients $c_{\indexvectgamma}(h)$ in  \eqref{v2:1509241909} are given by
\begin{equation} \label{v2:1510301651}
c_{\indexvectgamma}(h)=\dfrac1{\|\dchi^{(2\vect{n})}_{\indexvectgamma}\|_{\omega^{(2\vect{n})}_{\vect{\kappa}}}^2}\langle\;\! h,\dchi^{(2\vect{n})}_{\indexvectgamma}\rangle_{\omega^{(2\vect{n})}_{\vect{\kappa}}}.
\end{equation}
\end{corollary}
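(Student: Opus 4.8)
The plan is to reduce the assertion to the extraction of Fourier-type coefficients from the orthogonal expansion of $h$ provided by Theorem \ref{v2:thm:210510081555}. First I would restrict the polynomial expansion \eqref{v2:1509241909} to the node set. Using the relation $T_{\indexvectgamma}(\vect{z}^{(2\vect{n})}_{\vect{i}}) = \dchi^{(2\vect{n})}_{\indexvectgamma}(\vect{i})$ recalled at the beginning of this subsection, together with the interpolation property $P^{(2\vect{n})}_{\vect{\kappa},h}(\vect{z}^{(2\vect{n})}_{\vect{i}}) = h(\vect{i})$ from Theorem \ref{v2:201510121554}, evaluation of \eqref{v2:1509241909} at each $\vect{i}\in\I^{(2\vect{n})}_{\vect{\kappa}}$ yields
\[ h(\vect{i}) = \sum_{\indexvectgamma \in \vect{\Gamma}^{(2\vect{n})}_{\vect{\kappa}}} c_{\indexvectgamma}(h)\,\dchi^{(2\vect{n})}_{\indexvectgamma}(\vect{i}),\qquad \vect{i}\in\I^{(2\vect{n})}_{\vect{\kappa}}. \]
In other words, $h = \sum_{\indexvectgamma \in \vect{\Gamma}^{(2\vect{n})}_{\vect{\kappa}}} c_{\indexvectgamma}(h)\,\dchi^{(2\vect{n})}_{\indexvectgamma}$ as an identity in $\mathcal{L}(\I^{(2\vect{n})}_{\vect{\kappa}})$.

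Next I would invoke orthogonality. By Theorem \ref{v2:thm:210510081555} the functions $\dchi^{(2\vect{n})}_{\indexvectgamma}$, $\vectgamma \in \vect{\Gamma}^{(2\vect{n})}_{\vect{\kappa}}$, are pairwise orthogonal with respect to $\langle\,\cdot,\cdot\,\rangle_{\omega^{(2\vect{n})}_{\vect{\kappa}}}$. Taking the inner product of the preceding identity with a fixed $\dchi^{(2\vect{n})}_{\indexvectgamma}$ and using linearity in the first argument collapses the sum to a single surviving term, giving
\[ \langle h,\dchi^{(2\vect{n})}_{\indexvectgamma}\rangle_{\omega^{(2\vect{n})}_{\vect{\kappa}}} = c_{\indexvectgamma}(h)\,\|\dchi^{(2\vect{n})}_{\indexvectgamma}\|_{\omega^{(2\vect{n})}_{\vect{\kappa}}}^2. \]
Since $\|\dchi^{(2\vect{n})}_{\indexvectgamma}\|_{\omega^{(2\vect{n})}_{\vect{\kappa}}}^2\neq 0$ by the explicit norm values \eqref{v2:A1508221825}, dividing through yields the claimed formula \eqref{v2:1510301651}.

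There is no substantial obstacle in this argument; it is the standard coefficient-extraction for an orthogonal basis. The only points meriting a sentence of justification are that evaluating the polynomial identity at the nodes is legitimate, which is immediate from $T_{\indexvectgamma}(\vect{z}^{(2\vect{n})}_{\vect{i}}) = \dchi^{(2\vect{n})}_{\indexvectgamma}(\vect{i})$, and that the coefficients $c_{\indexvectgamma}(h)$ in \eqref{v2:1509241909} are indeed well defined and unique, which holds because $\{\,T_{\indexvectgamma}\,|\,\vectgamma\in\vect{\Gamma}^{(2\vect{n})}_{\vect{\kappa}}\,\}$ is a basis of $\Pi^{(2\vect{n})}_{\vect{\kappa}}$ by Theorem \ref{v2:201510121554}. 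Everything else reduces to orthogonality and the nonvanishing of the norms.
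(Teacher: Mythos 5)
Your argument is correct and is precisely the proof the paper intends but leaves implicit (the paper simply says the corollary follows from Theorem \ref{v2:thm:210510081555}): restrict the expansion to the nodes via $T_{\indexvectgamma}(\vect{z}^{(2\vect{n})}_{\vect{i}})=\dchi^{(2\vect{n})}_{\indexvectgamma}(\vect{i})$ and the interpolation property, then extract each coefficient by orthogonality and the nonvanishing norms \eqref{v2:A1508221825}. Nothing further is needed.
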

Similar as described at the end of Section \ref{v2:sec:polyinter1649}, also formula \eqref{v2:1510301651} can be performed
efficiently using discrete cosine transforms. We consider the index set 
\[ \J^{(2\vect{n})}= \bigtimes_{\substack{\vspace{-8pt}\\\mathsf{i}=1}}^{\substack{\mathsf{d}\\\vspace{-10pt}}} \{0,\ldots,2n_{\mathsf{i}}\},\]
as well as
\[ g^{(2 \vect{n})}_{\vect{\kappa}}(\vect{i}) = \left\{ \begin{array}{cl} \mathfrak{w}^{(2\vect{n})}_{\vect{\kappa},\vect{i}} h(\vect{i}), \quad  & \text{if}\; \vect{i}\in\I^{(2\vect{n})}_{\vect{\kappa}},\rule[-0.65em]{0pt}{1em}\\ 
       0, \quad 
  & \text{if}\; \vect{i}\in\J^{(2\vect{n})} \setminus \I^{(2\vect{n})}_{\vect{\kappa}}, \end{array} \right. \]
and, recursively for $\mathsf{j}=1,\ldots,\mathsf{d}$,
\[g^{(2 \vect{n})}_{\vect{\kappa},(\gamma_1,\ldots,\gamma_{\mathsf{j}})}(i_{\mathsf{j}+1},\ldots,i_{\mathsf{d}})=\sum_{i_{\mathsf{j}}=0}^{2n_{\mathsf{j}}} g^{(2 \vect{n})}_{\vect{\kappa},(\gamma_1,\ldots,\gamma_{\mathsf{j}-1})}(i_{\mathsf{j}},\ldots,i_{\mathsf{d}})\cos(\gamma_{\mathsf{j}}i_{\mathsf{j}}\pi/(2n_{\mathsf{j}})).\]
Then, since \eqref{v2:A1508221825}, we have
\[c_{\indexvectgamma}(h)=
 \left\{ \begin{array}{rl}  2^{\mathfrak{e}(\indexvectgamma)-\mathfrak{f}^{(2\vect{n})}(\indexvectgamma)}g^{(2 \vect{n})}_{\vect{\kappa},\indexvectgamma},\; & \text{if}\quad 
 \vectgamma \in\vect{\Gamma}^{(2\vect{n})}_{\vect{0}}\setminus\{(0,\ldots,0,2 n_{\mathsf{d}})\},\\[0.3em]
   g^{(2 \vect{n})}_{\vect{\kappa},\indexvectgamma},\; & \text{if}\quad \vectgamma =(0,\ldots,0,2 n_{\mathsf{d}}).
\end{array}\right.\]
As a composition of $\mathsf{d}$ fast cosine transforms, the computation of the set of coefficients $c_{\indexvectgamma}(h)$ can be executed with complexity $\mathcal{O}\!\left(\!\!\;\p[2\vect{n}] \ln \!\!\;\p[2\vect{n}] \right)$. 
Once the coefficients $c_{\indexvectgamma}(h)$ are computed, the interpolating polynomial $P^{(2\vect{n})}_{\vect{\kappa},h}(\vect{x})$ can be evaluated at $\vect{x} \in [-1,1]^{\mathsf{d}}$ using formula \eqref{v2:1509241909}. 

\medskip

Finally, we obtain in analogy to Theorem \ref{v2:1509011747} the following quadrature rule.

\begin{theorem} 
Let $P$ be  a  $\mathsf{d}$-variate polynomial function $[-1,1]^{\mathsf{d}}\to\mathbb{C}$. If
 \[ \text{$\langle P, T_{\indexvectgamma} \rangle = 0$ for all $\vectgamma \in\mathbb{N}_0^{\mathsf{d}} \setminus \{ \vect{0} \}$,  satisfying \eqref{v2:1507201241},}\] then 
\[\frac{1}{\pi^{\mathsf{d}}} \int_{[-1,1]^{\mathsf{d}}} P(\vect{x}) w(\vect{x}) \,\mathrm{d}\vect{x} = \sum_{\vect{i} \in \I^{(2\vect{n})}_{\vect{\kappa}}} \mathfrak{w}^{(2\vect{n})}_{\vect{\kappa},\vect{i}} P(\vect{z}^{(2 \vect{n})}_{\vect{i}}). \]
\end{theorem}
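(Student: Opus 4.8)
The plan is to mimic the proof of Theorem \ref{v2:1509011747}, replacing Proposition \ref{v2:1507211320} by Proposition \ref{v2:1508311218} and the index condition \eqref{v2:1507201132} by \eqref{v2:1507201241}. Both sides of the claimed identity are linear functionals of $P$ on the space $\Pi^{\mathsf{d}}$, so it suffices to verify the equality on a spanning set of the subspace of admissible polynomials. Since the $T_{\indexvectgamma}$, $\vectgamma\in\mathbb{N}_0^{\mathsf{d}}$, form an orthogonal basis of $\Pi^{\mathsf{d}}$ with respect to the inner product \eqref{v2:1508220014}, I would first expand $P=\sum_{\indexvectgamma} c_{\indexvectgamma}T_{\indexvectgamma}$ as a finite sum and observe that $\langle P,T_{\indexvectgamma}\rangle = c_{\indexvectgamma}\|T_{\indexvectgamma}\|^2$. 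Hence the hypothesis forces $c_{\indexvectgamma}=0$ for every $\vectgamma\neq\vect{0}$ satisfying \eqref{v2:1507201241}, so that $P$ is a linear combination of basis polynomials $T_{\indexvectgamma}$ with $\vectgamma=\vect{0}$ or with $\vectgamma$ not satisfying \eqref{v2:1507201241}. It then remains to check the quadrature identity for a single such basis polynomial.

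For the left-hand side, $\frac{1}{\pi^{\mathsf{d}}}\int_{[-1,1]^{\mathsf{d}}} T_{\indexvectgamma}(\vect{x})w(\vect{x})\,\mathrm{d}\vect{x}=\langle T_{\indexvectgamma},T_{\vect{0}}\rangle$, which by orthogonality of the basis and $\|T_{\vect{0}}\|^2=1$ equals $1$ when $\vectgamma=\vect{0}$ and $0$ otherwise. For the right-hand side I would use the relation $T_{\indexvectgamma}(\vect{z}^{(2\vect{n})}_{\vect{i}})=\dchi^{(2\vect{n})}_{\indexvectgamma}(\vect{i})$ together with $\omega^{(2\vect{n})}_{\vect{\kappa}}(\{\vect{i}\})=\mathfrak{w}^{(2\vect{n})}_{\vect{\kappa},\vect{i}}$, so that $\sum_{\vect{i}\in\I^{(2\vect{n})}_{\vect{\kappa}}}\mathfrak{w}^{(2\vect{n})}_{\vect{\kappa},\vect{i}}\,T_{\indexvectgamma}(\vect{z}^{(2\vect{n})}_{\vect{i}})=\tint\dchi^{(2\vect{n})}_{\indexvectgamma}\,\mathrm{d}\omega^{(2\vect{n})}_{\vect{\kappa}}$. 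Proposition \ref{v2:1508311218} evaluates this discrete integral directly: it is $0$ when \eqref{v2:1507201241} fails and $(-1)^{\vartheta_{\vect{\kappa}}(\indexvectgamma)}$ when \eqref{v2:1507201241} holds; in the case $\vectgamma=\vect{0}$ one takes $\vect{h}=\vect{0}$, so $\vartheta_{\vect{\kappa}}(\indexvectgamma)=0$ and the value is $1$.

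Matching the two evaluations is then immediate for the surviving basis elements: if $\vectgamma=\vect{0}$ both sides equal $1$, and if $\vectgamma\neq\vect{0}$ does not satisfy \eqref{v2:1507201241} both sides vanish. By linearity this establishes the quadrature formula for every admissible $P$. I do not expect a genuine obstacle here; the only point requiring care is that the orthogonality hypothesis is precisely calibrated to eliminate exactly those $T_{\indexvectgamma}$, $\vectgamma\neq\vect{0}$, on which Proposition \ref{v2:1508311218} returns the nonzero value $(-1)^{\vartheta_{\vect{\kappa}}(\indexvectgamma)}$ while the continuous integral gives $0$. Verifying that these are identical to the polynomials removed by the assumption amounts to comparing the condition \eqref{v2:1507201241} appearing in the hypothesis with the one occurring in Proposition \ref{v2:1508311218}, and no further estimates are needed beyond the discrete orthogonality results already established.
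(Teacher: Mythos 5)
Your proposal is correct and matches the paper's intended argument: the paper states this result ``in analogy to Theorem \ref{v2:1509011747}'' and that proof proceeds exactly as you do, reducing by linearity to the basis polynomials $T_{\indexvectgamma}$, evaluating the left-hand side by orthogonality and the right-hand side via $T_{\indexvectgamma}(\vect{z}^{(2\vect{n})}_{\vect{i}})=\dchi^{(2\vect{n})}_{\indexvectgamma}(\vect{i})$ together with Proposition \ref{v2:1508311218}. Your additional care in noting that the surviving exceptional case $\vectgamma=\vect{0}$ gives $\vartheta_{\vect{\kappa}}(\vectgamma)=0$, hence the value $1$ on both sides, is exactly the right check.
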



\end{document}